\newtheorem{thm}{Theorem}[section]
 \newtheorem{cor}[thm]{Corollary}
 \newtheorem{lem}[thm]{Lemma}
 \newtheorem{prop}[thm]{Proposition}
 \newtheorem{ex}[thm]{Example}
 \newtheorem{conj}[thm]{Conjecture}
 \theoremstyle{definition}
  \newtheorem{defn}[thm]{Definition}
 \theoremstyle{remark}
 \newtheorem{rem}[thm]{Remark}
 \def\asdim{\rm asdim\, }
 \def\diam{\mathrm{diam}\,}
 \def\ad{\mathrm{ad}}
\def\rank{\eta}
\begin{document}

\title{A characterization for asymptotic dimension growth}

\author{Goulnara Arzhantseva}
\address{Universit\"at Wien, Fakult\"at f\"ur Mathematik\\
Oskar-Morgenstern-Platz 1, 1090 Wien, Austria.}
\email{goulnara.arzhantseva@univie.ac.at}

\author{Graham A. Niblo}
\author{Nick Wright}
\author{Jiawen Zhang}
\address{School of Mathematics, University of Southampton, Highfield, SO17 1BJ, United Kingdom.
}
\email{g.a.niblo@soton.ac.uk,wright@soton.ac.uk,jiawen.zhang@soton.ac.uk}

\date{}
\subjclass[2010]{20F65, 20F67, 20F69, 51F99}
\keywords{Asymptotic dimension growth, CAT(0) cube complex, coarse median space, mapping class group.}

\thanks{Partially supported by the European Research Council (ERC) grant of Goulnara ARZHANTSEVA, no. 259527 and the Sino-British Fellowship Trust by Royal Society.}
\baselineskip=16pt

\begin{abstract}
  We give a characterization for asymptotic dimension growth. We apply it to CAT(0) cube complexes of finite dimension, giving an alternative proof of N.~Wright's result
  on their finite asymptotic dimension. We also apply our new characterization to geodesic coarse median spaces of finite rank and establish that they have subexponential asymptotic dimension growth.
  This strengthens a recent result of J.~\u{S}pakula and N.~Wright.
\end{abstract}
\maketitle

\section{Introduction}
The concept of asymptotic dimension was first introduced by Gromov~\cite{gromov1992asymptotic} in 1992 as a coarse analogue of the classical topological covering dimension. It started to attract much attention in 1998 when Yu  proved that the Novikov higher signature conjecture holds for groups with finite asymptotic dimension (FAD)~\cite{Yu98}. A lot of groups and spaces are known to have finite asymptotic dimension. Among those are, for instance, finitely generated abelian groups, free groups of finite rank, Gromov hyperbolic groups \cite{gromov1987hyperbolic, roe2005hyperbolic}, mapping class groups \cite{bestvina2010asymptotic}, CAT(0) cube complexes of finite dimension \cite{wright2012finite}, see \cite{BD05} for an excellent survey of these and other results. Recently Behrstock, Hagen and Sisto introduced the powerful new notion of hierarchically hyperbolic spaces and showed that these have finite asymptotic dimension \cite{behrstock2015asymptotic}, recovering a number of the above results, including notably mapping class groups and a number of CAT(0) cube complexes.

On the other hand, there are many groups and spaces with infinite asymptotic dimension. Examples are the wreath product $\mathbb{Z} \wr \mathbb{Z}$, the Grigorchuk group \cite{smith2007asymptotic}, the Thompson groups, etc. Generalizing FAD, Dranishnikov 
 defined the asymptotic dimension growth for a space \cite{dranishnikov2006groups}; if the asymptotic dimension growth function is eventually constant then the space has FAD. Dranishnikov showed that the wreath product of a finitely generated nilpotent group with a finitely generated FAD group has polynomial asymptotic dimension growth. He also showed that polynomial asymptotic dimension growth implies Yu's Property A, and, hence, the coarse Baum-Connes Conjecture, provided the space has bounded geometry \cite{yu2000coarse}. 
 Later, Ozawa \cite{ozawa2012metric}  extended this result to spaces with subexponential growth; see also~\cite{oppenheim2014depth}.
Bell analyzed how the asymptotic dimension function is affected by various group-theoretical constructions~\cite{bell2005growth}.

In this paper, we give an alternative characterization for the asymptotic dimension growth function which is inspired by Brown and Ozawa's proof of Property~A for Gromov's hyperbolic groups, ~\cite[Theorem 5.3.15]{brown2008c},  which is in turn
inspired by \cite{kaimanovich2004boundary}. We use this to study two notable examples: CAT(0) cube complexes of finite dimension and coarse median spaces of finite rank.

The techniques used to study these examples are developments of those used by \u{S}pakula and Wright \cite{spakula2016coarse} to establish Property A for uniformly locally finite coarse median spaces of finite rank. As a byproduct, we obtain a new proof of finite asymptotic dimension for CAT(0) cube complexes which allows one to explicitly construct the required controlled covers. This compares with Wright's original proof, \cite{wright2012finite},  which is discussed below.

CAT(0) cube complexes are a nice class of non-positively curved spaces, first studied by Gromov who gave a purely combinatorial
condition for recognizing the non-positive curvature of cube complexes~\cite{gromov1987hyperbolic}. Many well-known groups act properly on CAT(0) cube complexes. For instance, right-angled Artin groups, many small cancellation groups, and Thompson's groups admit such actions.
This makes it possible to deduce properties of these groups from the corresponding properties of the CAT(0) cube complexes.

In 2010, Wright \cite{wright2012finite} proved that the asymptotic dimension of a CAT(0) cube complex $X$ is bounded by its dimension. He proved this by constructing a family of  $\varepsilon-$Lipschitz cobounded maps to CAT(0) cube complexes of (at most) the same dimension indexed by $\varepsilon>0$. We  use our characterization for finite asymptotic dimension to give a direct proof of this result. Namely, we construct uniformly bounded covers with suitable properties. Being more explicit, this proof loses, however, the sharp bound on the asymptotic dimension. Thus, we give an alternative proof of  the following non-quantitative variant of Wright's theorem:
\begin{thm}\label{thm a}
Let $X$ be a CAT(0) cube complex of finite dimension, then $X$ has finite asymptotic dimension.
\end{thm}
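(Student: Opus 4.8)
The plan is to derive Theorem~\ref{thm a} from the characterization of finite asymptotic dimension obtained above, by exhibiting, for each $R>0$, a uniformly bounded cover of $X$ with Lebesgue number at least $R$ whose multiplicity is bounded by a constant depending only on $\dim X$. From the outset I would replace the CAT(0) metric on $X$ by the combinatorial $\ell^1$-metric on its $0$-skeleton: the two spaces are coarsely equivalent, so they have the same asymptotic dimension, and the combinatorial picture (half-spaces, hyperplanes, the median operator) is the one adapted to the construction.

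The geometric input I would set up first is the behaviour of intervals. Fix a basepoint $o$. For a vertex $x$ and an integer $0\le k\le d(o,x)$, let $x_k$ denote the vertex of the interval $[o,x]$ at combinatorial distance $k$ from $x$ --- concretely, the vertex obtained from $x$ by reversing the $k$ hyperplanes separating $o$ from $x$ that are ``closest'' to $x$, equivalently a gate-type projection defined through the median. I need two facts: (i) $d(x_k,x_{k+1})\le 1$, so that $k\mapsto x_{d(o,x)-k}$ is a unit-speed path from $o$ towards $x$ living inside $[o,x]$; and (ii) a fellow-traveller estimate --- if $d(x,y)\le t$ then $x_k$ and $y_{k'}$ stay close for an appropriate reparametrisation $k'=k'(k)$ once $k$ exceeds a bound depending on $t$. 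Fact~(ii) is the cube-complex form of the key estimate in the work of \u{S}pakula and Wright on coarse median spaces, and in this setting it can be read off directly from the half-space description of the median.

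Now fix $R>0$ and choose a scale $N=N(R)$ large compared with $R$. To each vertex $x$ I would associate the probability measure $\mu_x$ on the vertices of $X$ given by (a discretisation of) the uniform average over the ``trailing window'' $\{x_k : 0\le k\le \min(N,d(o,x))\}$ of the above path. By construction $\operatorname{supp}\mu_x\subseteq B(x,N)$, which bounds the mesh uniformly; and fact~(ii) yields $\|\mu_x-\mu_y\|_1\lesssim d(x,y)/N$, so that for $N$ large the family $(\mu_x)$ varies arbitrarily slowly over $R$-balls. Passing to the cover $\mathcal{U}=\{\operatorname{supp}\mu_x : x\in X\}$ and rescaling, this slow variation translates into a Lebesgue number at least $R$.

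The remaining point --- and the one where finite-dimensionality is genuinely used --- is to bound the multiplicity of $\mathcal{U}$ by a constant depending on $d=\dim X$ only. The supports $\operatorname{supp}\mu_x$ and $\operatorname{supp}\mu_y$ meet only when the length-$N$ trailing windows of $[o,x]$ and $[o,y]$ intersect, so it suffices to show that over a fixed vertex $v$ only boundedly many of these windows can pairwise overlap. Here one analyses the half-space systems: a vertex $v=x_k$ lying in the window of $x$ records a bounded amount of combinatorial data about how the hyperplanes near $v$ are oriented towards $x$, and the finite dimension of $X$ --- through the fact that no more than $d$ hyperplanes pairwise cross, and the resulting control on cube links --- should cut the number of possible such configurations down to a function of $d$ alone. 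Turning this into a precise, $R$-independent multiplicity bound is the crux of the argument; it is exactly the step that forgoes Wright's sharp bound $\asdim X\le d$ in exchange for a softer, non-quantitative estimate. Granting it, the characterization delivers $\asdim X<\infty$.
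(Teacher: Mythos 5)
Your proposal confuses the Property~A argument (Brown--Ozawa / Kaimanovich style, via slowly varying measures) with the finite asymptotic dimension argument, and as written it does not work; the gap is structural, not just a missing computation.

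The central problem is the passage from the measures $\mu_x$ to the cover $\mathcal{U}=\{\operatorname{supp}\mu_x : x\in X\}$. First, the estimate $\|\mu_x-\mu_y\|_1\lesssim d(x,y)/N$ is exactly the input for Property~A; it does \emph{not} imply that the cover of supports has Lebesgue number $\geqslant R$. Indeed each $\operatorname{supp}\mu_x$ lies inside the interval $[o,x]$, and a ball $B(v,R/2)$ is in general not contained in any interval $[o,y]$, let alone in a trailing window inside it, so the Lebesgue number of $\mathcal{U}$ is essentially $0$ regardless of $N$. Second and more seriously, the multiplicity of $\mathcal{U}$ is unbounded even when $X=\mathbb{Z}$ with basepoint $0$: there $\operatorname{supp}\mu_x=\{x-N,\ldots,x\}$, so a fixed vertex $v$ lies in $\operatorname{supp}\mu_x$ for roughly $N$ distinct values of $x$, and $N\to\infty$ with $R$. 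So you cannot ``grant'' the multiplicity bound and move on --- the cover you have built simply does not have it, already in rank one. The paper's proof recognises precisely this: the set $\widetilde{S}(x,k,l)=h_l(B(x,k))$ (the analogue of your window) satisfies the consistency conditions but not the cardinality bound, and the entire technical content of Section~5 (Lemmas~\ref{main lemma} and \ref{strong main lemma}) is devoted to constructing a uniformly separated net $C_x\subseteq[x_0,x]$ with consistent projections $p_x$, so that one replaces each window by its image in the net. That replacement, built by induction on $\dim[x_0,x]$ using the decomposition of normal spheres $[x_0,x]\cap S_{nor}(x_0,n)$ into at most $\eta$ lower-dimensional intervals (Proposition~\ref{dec final}), is the step that actually produces a cardinality bound depending only on $\eta$; your proposal has no counterpart to it.

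A secondary issue is the choice of path. You define $x_k$ by ``reversing the $k$ hyperplanes closest to $x$'', which is not well defined when the separating hyperplanes form an antichain, and you lose the convexity that makes the induction possible. The paper instead uses Niblo--Reeves normal cube paths and the normal metric $d_{nor}$: normal balls $B_{nor}(x_0,n)$ are convex (Lemma~\ref{conv}), which is what allows the decomposition of $[x_0,x]\cap S_{nor}(x_0,nl)$ into finitely many intervals $[x_h,v]$ of strictly smaller dimension, and the fellow-traveller property for normal cube paths (Proposition~\ref{fellow-traveller}) is what makes the consistency condition verifiable. If you want to pursue your outline, you would need to (a) switch from edge-speed paths to normal cube paths so that the relevant sets become convex, (b) build the separated net and consistent projections as in Lemma~\ref{strong main lemma}, and (c) feed the resulting finite sets $S(x,k,l)$ into Corollary~\ref{char for asdim} rather than taking the supports as a cover.
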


The key point in our approach is to analyse the normal cube path distance on the cube complex, introduced by Niblo and Reeves~\cite{niblo1998geometry}. We consider the ball with respect to the normal cube path distance rather than to the ordinary edge-path distance. We decompose such a ball into intervals and use induction on the dimension in order to construct some ``separated" net satisfying a suitable consistency property. In the process, we give a detailed analysis of normal balls and normal spheres (i.e. balls and spheres with respect to the normal cube path distance). See Section 4 for all details.

Our second application is to coarse median spaces. They were introduced by Bowditch as a coarse variant of classical median spaces~\cite{bowditch2013coarse}. The notion of a coarse median group leads to a unified viewpoint on several interesting classes of groups, including Gromov's hyperbolic groups, mapping class groups, and CAT(0) cubical groups. Bowditch  showed that hyperbolic spaces are exactly coarse median spaces of rank 1, and mapping class groups are examples of coarse median spaces of finite rank \cite{bowditch2013coarse}. He  also established interesting properties for coarse median spaces such as Rapid Decay, the property of having quadratic Dehn function, etc.

Intuitively, a coarse median space is a metric space equipped with a ternary operator (called the coarse median), in which every finite subset can be approximated by a finite median algebra. In these approximations the coarse median is approximated by an actual median with the distortion being controlled by the metric. This extends Gromov's observation that in a $\delta$-hyperbolic space finite subsets can be well approximated by finite trees.

Recently,  \u{S}pakula and Wright  proved that a coarse median space with finite rank and at most exponential volume growth has Property A~\cite{spakula2016coarse}. Following their proof and using our characterization for asymptotic dimension growth, we obtain the following result:
\begin{thm}\label{thm b}
Let $X$ be a geodesic coarse median space with finite rank and  at most exponential volume growth, then $X$ has subexponential asymptotic dimension growth.
\end{thm}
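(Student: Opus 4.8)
The plan is to deduce Theorem~\ref{thm b} from the characterization of asymptotic dimension growth obtained above, by running a version of the coarse-median construction of \u{S}pakula and Wright which keeps track of the combinatorial complexity of the covers it produces. By our characterization --- which, in the spirit of Brown and Ozawa's criterion for Property~A, recasts control of the function $\mathrm{ad}_X$ in terms of cobounded, slowly varying families of probability measures --- it suffices to produce, for every scale $\lambda$, a family $(\nu_x)_{x\in X}$ of finitely supported probability measures on $X$ with $\operatorname{supp}\nu_x\subseteq B(x,D(\lambda))$ for some finite $D(\lambda)$, with $\|\nu_x-\nu_y\|_1$ uniformly small whenever $d(x,y)\le\lambda$, and such that every point of $X$ lies in $\operatorname{supp}\nu_x$ for at most $m(\lambda)$ values of $x$; then $\mathrm{ad}_X$ is dominated by $\lambda\mapsto m(\lambda)$ independently of $D(\lambda)$, so the whole content of the theorem is to arrange $\log m(\lambda)=o(\lambda)$.

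To build such a family, fix a basepoint $e\in X$ and let $\nu$ be the rank. By Bowditch's structure theory \cite{bowditch2013coarse}, each coarse interval $[e,x]$ lies within uniformly bounded Hausdorff distance of a finite CAT(0) cube complex of dimension at most $\nu$, and the geodesic hypothesis makes the intervals coarsely connected with uniform constants. Following \cite{spakula2016coarse} and the method of Theorem~\ref{thm a}, I would stratify $X$ by level sets of $x\mapsto d(e,x)$ into bands of width comparable to $\lambda$ (taking $\nu+1$ colourings of the bands) and then subdivide each band by using the coarse cube-complex structure of the intervals meeting it: by an induction on the rank $\nu$, exactly as in the analysis of normal balls and normal spheres in Section~4, one constructs inside the intervals ``separated nets satisfying a consistency property'', so that for $\lambda$-close points $x,y$ --- whose intervals coincide up to bounded error along the common subinterval $[e,m(e,x,y)]$, with $m(e,x,y)$ within $O(\lambda)$ of both --- the nets agree along that subinterval. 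The measure $\nu_x$ is then the partition-of-unity vector subordinate to the resulting cover of $X$ by pieces of bands; these pieces have diameter comparable to $\lambda$, giving the coboundedness, and $\nu_x$ varies slowly because $\lambda$-close points meet essentially the same pieces.

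The remaining, and crucial, task is to bound the multiplicity $m(\lambda)$. In the exact setting of a finite-dimensional CAT(0) cube complex this subdivision of the bands has \emph{bounded} multiplicity, which is what underlies Theorem~\ref{thm a}; but in a coarse median space the intervals are only approximately cube-complex-like, so the boundaries between the pieces of a band are blurred over some scale $\epsilon(\lambda)$ arising from the coarse median approximation, and a point within $\epsilon(\lambda)$ of these boundaries may lie in the supports of measures attached to several pieces. The number of such pieces accessible at a given point is controlled by a volume of radius $\epsilon(\lambda)$, hence, by the at-most-exponential volume growth hypothesis $|B(z,r)|\le Ae^{Br}$, is at most of order $Ae^{B\epsilon(\lambda)}$. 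The heart of the argument --- and the step I expect to be the main obstacle --- is therefore to show, using the finite rank in an essential way, that this blur scale grows sublinearly, $\epsilon(\lambda)=o(\lambda)$; this forces $\log m(\lambda)\le B\,\epsilon(\lambda)+O(1)=o(\lambda)$. Feeding the family $(\nu_x)$ into our characterization then yields $\log\mathrm{ad}_X(\lambda)=o(\lambda)$, i.e.\ subexponential asymptotic dimension growth; by Ozawa's theorem this recovers and strengthens the Property~A conclusion of \u{S}pakula and Wright, and the occurrence of ``at most exponential'' growth here, in place of a stronger conclusion such as finite asymptotic dimension, reflects precisely the tension in this last step between the exponential volume bound and the finite-rank saving.
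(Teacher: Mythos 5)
Your high-level picture --- that subexponential growth should come from trading a sublinear ``blur'' scale against at-most-exponential volume --- matches the paper's logic, but the route you propose diverges substantially and leaves the key estimate unproven. Two specific issues. First, Theorem~\ref{main prop} is phrased in terms of finite \emph{sets} $S(x,k,l)$ with containment, consistency and cardinality conditions, not in terms of probability measures; the Brown--Ozawa analogy is only motivational, and the conditions you would need to verify are those in item (2) of Theorem~\ref{main prop}, not slow variation of a measure-valued map. Second, the paper's Section~6 does \emph{not} carry out a band decomposition, a rank induction, or any coarse analogue of the normal cube path / normal sphere machinery of Section~4. That machinery needs exact median structure (a well-defined normal cube path, a decomposition of normal spheres into lower-dimensional intervals), which simply does not exist in a coarse median space; attempting to ``coarsify'' it would reopen precisely the hard uniformity questions it was designed to settle. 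Instead the paper takes a much more direct route: it sets $S(x,k,l_t)=\{h_y: y\in B(x,k)\}$, where $h_y$ is the $h$-point produced by \u{S}pakula--Wright's Lemma~\ref{lem2} applied to the pair $(y,x_0)$ with parameter $r_t$. Conditions (i)--(iii) then hold essentially by definition, and the cardinality bound follows by projecting each $h_y$ to $p_y=\mu(\mu(x,y,x_0),x_0,h_y)$, observing (Lemma~\ref{lem3}) that $\rho(h_y,p_y)\leqslant\alpha r_t+\beta$, that $p_y\in[x,x_0]_{L_1(\lambda)}$, that Bowditch's Proposition~9.8 bounds the number of possible $p_y$ by a polynomial $P(l_t)$, and that exponential growth gives at most $c'c^{r_t}$ points $h_y$ near each $p_y$.

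The step you flag as ``the main obstacle'' --- showing the blur scale is $o(\lambda)$ --- is genuinely the crux, and your proposal does not establish it; it merely identifies it. In the paper this is not proved afresh but imported: Lemma~\ref{lem2} furnishes, for each $t$, a radius $r_t$ with the stated interval-pushing property, and the radii $r_t$ are chosen inductively so that $l_t=(tr_t-H(0))/(3K)$ is increasing and $r_t/l_t\to 0$. It is exactly this $r_t/l_t\to 0$ that makes $P(l_t)c'c^{r_t}$ subexponential in $l_t$. Without some substitute for Lemma~\ref{lem2} your outline has a genuine gap at the only point that matters, and the band-stratification approach offers no obvious way to close it; so while your intuition about where the tension lies is correct, the proof as sketched does not go through.
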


 Hierarchically hyperbolic spaces are examples of coarse median spaces, see \cite{behrstock2015hierarchically}, hence our theorem is broader in scope, though with a weaker conclusion, than the finite asymptotic dimension result proven in \cite{behrstock2015asymptotic}. We expect the following
general result.
\begin{conj}
Every geodesic coarse median space with finite rank has finite asymptotic dimension.
\end{conj}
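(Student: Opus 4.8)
The plan is to upgrade the subexponential growth estimate of Theorem~\ref{thm b} to genuine finiteness, by transporting the argument behind Theorem~\ref{thm a} from CAT(0) cube complexes to the coarse median setting. First I would use the defining approximation property of a coarse median space: a point $x_0\in X$ and a scale $r$ being fixed, the ball $B(x_0,r)$ can be approximated by a finite median algebra $M_r$, realised as the vertex set of a finite CAT(0) cube complex $Y_r$ of dimension at most $\rank$ (the rank of $X$), in such a way that the median on $M_r$ approximates the coarse median of $X$ up to an additive error depending only on the coarse median parameters of $X$ and not on $r$.

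Next, on each $Y_r$ I would run the machinery of Section~4: pass from the edge metric to the normal cube path metric of Niblo and Reeves, decompose normal balls into intervals, and induct on the dimension to build uniformly bounded covers of the normal balls whose multiplicity is bounded by a function of $\rank$ alone. Pulling these covers back to $B(x_0,r)\subseteq X$ along the coarse embedding $B(x_0,r)\hookrightarrow M_r$ yields, for every scale $r$, a cover of $X$ with multiplicity bounded in terms of $\rank$ and with Lebesgue number tending to infinity with $r$. Feeding this family into our characterization of asymptotic dimension --- in the regime that detects finiteness rather than merely the growth rate --- would then give the conclusion.

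The main obstacle, and the reason this is stated as a conjecture, is the lack of uniform control on these approximations as $r\to\infty$. In \u{S}pakula and Wright's argument the median model $M_r$ of an $r$-ball is faithful only up to a displacement growing with the cardinality of the set being approximated, which is exactly what forces the volume-growth hypothesis and caps the output at subexponential dimension growth; the pulled-back covers then have multiplicity controlled only in terms of the \emph{number of points} at the given scale, not in terms of $\rank$. To remove this one would need either a scale-independent model --- a single finite-dimensional median space, or a single CAT(0) cube complex, coarsely capturing all of $X$ with distortion bounded independently of $r$, which is roughly to ask that $X$ be quasi-isometric to a finite-dimensional median space --- or, more intrinsically, a genuinely coarse ``normal cube path'' geometry on $X$ itself, in which the combinatorial normal cube paths of Niblo and Reeves are replaced by coarse objects whose convexity and fellow-travelling survive the coarse median axioms, with the interval decomposition and the dimension induction still intact. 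Constructing such a coarse normal-cube-path structure is where the real difficulty lies; it would presumably also recover Theorem~\ref{thm a} and the rank-one (hyperbolic) case as special instances, which is one reason a uniform proof for all finite-rank geodesic coarse median spaces has so far been out of reach.
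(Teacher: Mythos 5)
This statement is a \emph{conjecture} in the paper; the authors offer no proof of it, and indeed the strongest result they obtain in this direction is Theorem~\ref{thm b}, which under the additional hypothesis of at most exponential volume growth yields only subexponential asymptotic dimension growth. There is therefore no proof in the paper to compare your attempt against. What you have written is not a proof either --- it is a program sketch together with an honest diagnosis of why the natural approach stalls --- and you say so yourself. Read on those terms, your submission is essentially correct.

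Your diagnosis of the obstruction is accurate and coincides with the reason the paper stops at a conjecture. In Bowditch's axiom (C2) the distortion $h(p)$ of the finite median model depends on the cardinality $p$ of the approximated set, so the median algebras $M_r$ modelling $B(x_0,r)$ degrade as $r\to\infty$; the multiplicity of any cover pulled back from $Y_r$ is then controlled by the number of points at scale $r$, not by the rank $\rank$, and this is exactly what forces the volume-growth hypothesis into Theorem~\ref{thm b} and caps its conclusion at subexponential growth. Your two candidate remedies --- a scale-independent median model, or an intrinsic coarse analogue of the Niblo--Reeves normal cube path geometry whose convexity, fellow-travelling, interval decomposition, and dimension induction survive the coarse median axioms --- are the same kind of missing ingredient one would need; the second is more in the spirit of Sections~4 and~5. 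One further difficulty you gloss over, worth naming: even granting good covers of each ball $B(x_0,r)$, one must assemble from them a single uniformly bounded cover of all of $X$ with controlled $r$-multiplicity, and the characterization of Theorem~\ref{main prop} is designed precisely around the consistency conditions (properties ii) and iii) on the sets $S(x,k,l)$) needed to make such an assembly work; the coarse median analogues of Lemma~\ref{consistency lemma}, Proposition~\ref{fellow-traveller}, and Lemma~\ref{char for x_h} that drive that consistency in the cubical case are not presently available. So: correctly identified as open, with a correct account of the gap, but no proof --- which is consistent with the paper.
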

By a result of Ozawa \cite{ozawa2012metric}, subexponential asymptotic dimension growth implies Property A, thus, our theorem strengthens  the result of \cite{spakula2016coarse}.

The paper is organized as follows. In Section 2, we give some preliminaries on asymptotic dimension growth, CAT(0) cube complexes, and coarse median spaces. In Section 3, we provide a characterization of the asymptotic dimension growth function, and, as a special case, give a characterization of finite asymptotic dimension. Sections 4 and 5 deal with CAT(0) cube complexes: in Section 4, we study normal balls and spheres which are essential in our approach to prove Theorem \ref{thm a} in Section 5. Section 6 deals with the coarse median case, and we prove Theorem \ref{thm b} there.

\section{Preliminaries}
\subsection{Asymptotic Dimension}
The notion of asymptotic dimension was first introduced by Gromov in 1993 \cite{gromov1992asymptotic} as a coarse analogue of the classical Lebesgue topological covering dimension. See also \cite{BD05}.

Let $(X,d)$ be a metric space and $r>0$. We call a family $\mathcal{U}=\{U_i\}$ of subsets in $X$ $r-$\emph{disjoint}, if for any $U\neq U'$ in $\mathcal{U}$, $d(U,U')\geqslant r$, where $d(U,U')=\inf\,\{d(x,x'):x\in U,x'\in U'\}$. We write
$$\bigsqcup_{r-disjoint}U_i$$
for the union of $\{U_i\}$. A family $\mathcal{V}$ is said to be \emph{uniformly bounded}, if $\mathrm{mesh}(\mathcal V)=\sup\,\{\mathrm{diam}(V):V\in\mathcal{V}\}$ is finite.
Let $\mathcal{U}=\{U_i\}$ be a cover of $X$ and $r> 0$. We define the \emph{$r-$multiplicity} of $\mathcal{U}$, denoted by $m_r(\mathcal{U})$,
 to be the minimal integer $n$ such that for any $x\in X$, the ball $B(x,r)$ intersects at most $n$ elements of $\mathcal{U}$.
As usual, $m(\mathcal U)$ denotes the \emph{multiplicity} of a cover $\mathcal U$, that is
 the maximal number of elements of $\mathcal U$ with a non-empty intersection.
 A number $\lambda>0$ is called a \emph{Lebesgue number} of $\mathcal{U}$, if for every subset $A \subseteq X$ with diameter $\leqslant \lambda$, there exists an element $U \in \mathcal{U}$ such that $A \subseteq U$. The Lebesgue number $L(\mathcal{U})$ of the cover $\mathcal{U}$ is defined to be the infimum of all Lebesgue numbers of $\mathcal{U}$.

\begin{defn}[\cite{gromov1992asymptotic}]
We say that the \emph{asymptotic dimension} of a metric space $X$ does not exceed $n$ and we write $\asdim X\leqslant n$, if for every $r>0$, the space $X$ can be covered by $n+1$ subspaces $X_0,X_1,\ldots,X_n$, and each $X_i$ can be further decomposed into some $r-$disjoint uniformly bounded subspaces:
$$X=\bigcup^n_{i=0}X_i,\mbox{\quad} X_i=\bigsqcup_{\mbox{\scriptsize$\begin{array}{c} r-disjoint \\ j\in \mathbb{N} \end{array}$}}X_{ij}, \hbox{ and }\sup_{i,j}\diam X_{ij}<\infty.$$
We say $\asdim X=n$, if $\asdim X\leqslant n$ and $\asdim X$ is not less than $n$.
\end{defn}

Here are basic examples of spaces and groups with finite asymptotic dimension.
\begin{ex}[\cite{NY12}, \cite{roe2005hyperbolic}]
\strut
\begin{enumerate}[1)]
  \item $\asdim \mathbb{Z}^n$ = $n$ for all $n\in \mathbb{N}$, where $\mathbb{Z}$ is the group of integers;
  \item Gromov's $\delta$-hyperbolic spaces, e.g., word hyperbolic groups, have finite asymptotic dimension.
\end{enumerate}
\end{ex}

From the definition, it is easy to see that the asymptotic dimension of a subspace is at most that of the ambient space. There are other equivalent definitions of asymptotic dimension. We list one here for a later use, and guide the reader to \cite{BD05} for  others.
\begin{prop}[\cite{BD05}]
Let $X$ be a metric space, then $\asdim X \leqslant n$ if and only if for any $r>0$, there exists a uniformly bounded cover $\mathcal{U}$ of $X$, such that $m_r(\mathcal{U}) \leqslant n+1$.
\end{prop}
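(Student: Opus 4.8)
The plan is to prove the two implications separately; the forward one is straightforward and the converse is the substantive half. For the forward implication, suppose $\asdim X\leqslant n$ and fix $r>0$. We apply the definition of asymptotic dimension with disjointness parameter $3r$ to write $X=\bigcup_{i=0}^{n}X_i$ with each $X_i$ a $3r$-disjoint union of sets $X_{ij}$ of uniformly bounded diameter, and take $\mathcal U=\{X_{ij}\}_{i,j}$. This is a uniformly bounded cover, and for each colour $i$ and each $x\in X$ the ball $B(x,r)$ meets at most one $X_{ij}$: two such sets would contain points within distance $2r<3r$ of each other, contradicting $3r$-disjointness. Hence $B(x,r)$ meets at most $n+1$ members of $\mathcal U$, i.e.\ $m_r(\mathcal U)\leqslant n+1$.

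For the converse we fix $r>0$ and must exhibit, for this $r$, an $r$-disjoint uniformly bounded $(n+1)$-coloured decomposition of $X$. First we would convert small $r$-multiplicity into a large Lebesgue number. Applying the hypothesis with a parameter $R$ (chosen later, a fixed multiple of $r$ depending only on $n$) yields a uniformly bounded cover $\mathcal U=\{U_k\}$ with $m_R(\mathcal U)\leqslant n+1$; we then pass to the thickened cover $\mathcal V=\{N_R(U_k)\}$, where $N_R(A)=\{x:d(x,A)<R\}$. One checks that $\mathcal V$ is uniformly bounded (meshes grow by at most $2R$), has ordinary multiplicity at most $n+1$ (since $x\in N_R(U_k)$ exactly when $B(x,R)$ meets $U_k$), and has Lebesgue number at least $R$ (a set of diameter $<R$ sits inside the $R$-neighbourhood of any of its points, hence inside some $N_R(U_k)$).

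The second step is the standard refinement of such a cover into colour classes. We shrink each $V\in\mathcal V$ to its $r$-interior $V^{(r)}=\{x\in V:d(x,X\setminus V)\geqslant r\}$ and split $X$ according to the pattern of sets a point lies deeply inside: the points lying deeply, and decisively, in a single $V$ form one colour class and are automatically $r$-separated, while the rest lie in intersections of the $V$'s, which can be covered with multiplicity one smaller, so that an induction on $n$ finishes the job. Alternatively, one can use the canonical map $x\mapsto(d(x,X\setminus V))_{V\in\mathcal V}$, suitably normalised, into the nerve of $\mathcal V$ --- a simplicial complex of dimension $\leqslant n$ --- decompose the nerve into $n+1$ families of small pairwise-disjoint sets in the usual way, and pull back. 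The delicate point --- and the place where the size of $R$ gets pinned down --- is the bookkeeping of how large the Lebesgue number must be for $r$-disjointness to survive the refinement, and I expect this to be the main technical obstacle; taking $R$ a suitable multiple of $r$ depending only on $n$ suffices. Combining the two steps gives $\asdim X\leqslant n$, completing the proof.
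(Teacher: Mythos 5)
The paper itself does not prove this proposition; it is quoted from the survey \cite{BD05}, so there is no in-text argument to compare against, and I will just assess what you wrote. Your forward direction is complete and correct: taking the $3r$-disjoint decomposition and declaring $\mathcal U=\{X_{ij}\}$, the $r$-ball argument does give $m_r(\mathcal U)\leqslant n+1$. The first half of your converse is also carried out correctly: passing from $m_R(\mathcal U)\leqslant n+1$ to the thickened cover $\mathcal V=\{N_R(U_k)\}$ does produce a uniformly bounded cover with ordinary multiplicity $\leqslant n+1$ and Lebesgue number $\geqslant R$, and your three one-line verifications are right.

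The remaining step --- converting a uniformly bounded cover with multiplicity $\leqslant n+1$ and large Lebesgue number into an $(n+1)$-coloured, $r$-disjoint, uniformly bounded decomposition --- is where the real content of the proposition lies, and this step you only gesture at. Neither of the two routes you name is actually executed: the ``shrink and split by pattern of deep containment'' description leaves unclear how the points lying deeply in two or more sets are to be distributed so as to both keep $r$-disjointness and reduce the multiplicity for an induction, and the nerve approach requires making the canonical partition-of-unity map $X\to|\mathrm{nerve}(\mathcal V)|$ Lipschitz with controlled constant, decomposing the $n$-dimensional uniform complex into $n+1$ disjoint families of stars, and pulling back --- all of which in turn dictates exactly how large $R$ must be relative to $r$ and $n$, which you explicitly defer. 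You are right that this is the delicate point, and both routes can be made to work with standard effort (the nerve route is the one in \cite{BD05}), but as written the converse has a genuine gap precisely at the coloring lemma that the proposition is really about.
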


\subsection{Asymptotic Dimension Growth}
Let us consider the direct sum of infinitely many copies of the integers: $G=\bigoplus\limits_\infty \mathbb{Z}$. Since for any $n \in \mathbb{N}$, the group $\mathbb{Z}^n$ is contained in $G$, by the above mentioned results, $G$ has infinite asymptotic dimension. In order to deal with such groups/spaces, Dranishnikov studied the following concept as a generalization of the property of having a finite asymptotic dimension.

\begin{defn}[\cite{dranishnikov2006groups}]\label{def for asdim growth}
Let $(X,d)$ be a metric space. Define a function
$$\ad_X(\lambda)=\min\{ m(\mathcal{U}):\mathcal{U} \mbox{~is~a~cover~of~}X,L(\mathcal{U}) > \lambda\}-1,$$
which is called the \emph{asymptotic dimension function} of $X$.
\end{defn}
Note that $ad_X$ is monotonic and
$$\lim_{\lambda \rightarrow \infty} ad_X(\lambda)=\asdim(X).$$

Like in the case of the volume function, the growth type of the asymptotic dimension function is more essential than the function itself. Recall that for $f,g\colon\mathbb{R}_+ \rightarrow \mathbb{R}_+$, we write $f \preceq g$, if there exists $k\in \mathbb{N}$, such that for any $x>k$, $f(x) \leqslant kg(kx+k)+k$. We write $f\thickapprox g$ if both $f \preceq g$ and $g \preceq f$. It is clear that
$``\thickapprox"$ is an equivalence relation. We define the growth type of $f$ to be the $\thickapprox$-equivalence class of $f$. Define the asymptotic dimension growth of $X$ to be the growth type of $\ad_X$.

By a result of Bell and Dranishnikov, the growth type of the asymptotic dimension function is a quasi-isometric invariant.
\begin{prop}[\cite{bell2005growth,dranishnikov2006groups}]
Let $X$ and $Y$ be two discrete metric spaces with bounded geometry. If $X$ and $Y$ are quasi-isometric, then $ad_X \approx ad_Y$. In particular, the asymptotic dimension growth is well-defined for finitely generated groups.
\end{prop}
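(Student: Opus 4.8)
The statement is the quasi-isometry invariance, up to the relation $\approx$, of the asymptotic dimension function, and the natural plan is the standard ``pull-back of covers'' argument, made quantitative. It is enough to prove $\ad_X\preceq\ad_Y$ starting from a single quasi-isometry $f\colon X\to Y$: applying this with a quasi-inverse of $f$ (which is again a quasi-isometry) then yields $\ad_Y\preceq\ad_X$, and hence $\ad_X\approx\ad_Y$. So fix constants $L\geqslant 1$ and $C\geqslant 0$ with $\frac{1}{L}d_X(x,x')-C\leqslant d_Y(f(x),f(x'))\leqslant L\,d_X(x,x')+C$ for all $x,x'\in X$ and with $f(X)$ $C$-dense in $Y$.

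The core step is the pull-back construction. Fix $\lambda>0$ and set $\mu:=L\lambda+L+C$. If $\ad_Y(\mu)=\infty$ there is nothing to prove, so choose a cover $\mathcal V$ of $Y$ with $L(\mathcal V)>\mu$ attaining the minimum in Definition~\ref{def for asdim growth}, so that $m(\mathcal V)=\ad_Y(\mu)+1$. Let $\mathcal U$ be the collection of the distinct non-empty sets among $\{f^{-1}(V):V\in\mathcal V\}$; since $\mathcal V$ covers $Y\supseteq f(X)$, the family $\mathcal U$ covers $X$. Two estimates are then needed. \emph{Multiplicity:} if a point $x\in X$ lies in $n$ distinct members $f^{-1}(V_1),\dots,f^{-1}(V_n)$ of $\mathcal U$, then the $V_i$ are distinct and $f(x)\in V_1\cap\cdots\cap V_n$, so $n\leqslant m(\mathcal V)$; hence $m(\mathcal U)\leqslant m(\mathcal V)$. \emph{Lebesgue number:} if $A\subseteq X$ has $\diam A\leqslant\lambda+1$, then $\diam f(A)\leqslant L(\lambda+1)+C=\mu<L(\mathcal V)$, so $f(A)$ is contained in some $V\in\mathcal V$ and therefore $A\subseteq f^{-1}(V)\in\mathcal U$; thus $\lambda+1$ is a Lebesgue number of $\mathcal U$ and $L(\mathcal U)\geqslant\lambda+1>\lambda$. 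Combining the two bounds, $\ad_X(\lambda)+1\leqslant m(\mathcal U)\leqslant m(\mathcal V)=\ad_Y(\mu)+1$, that is, $\ad_X(\lambda)\leqslant\ad_Y(L\lambda+L+C)$ for every $\lambda>0$.

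To remove the dependence on the constants, pick an integer $k\geqslant\max\{L+C,1\}$; then $L\lambda+L+C\leqslant k\lambda+k$ for all $\lambda>0$, so by the monotonicity of $\ad_Y$ and $k\geqslant1$ we obtain $\ad_X(\lambda)\leqslant\ad_Y(k\lambda+k)\leqslant k\,\ad_Y(k\lambda+k)+k$ for all $\lambda>k$, which is precisely $\ad_X\preceq\ad_Y$. The reverse inequality follows by the same argument applied to a quasi-inverse of $f$, giving $\ad_X\approx\ad_Y$. For the final assertion, two word metrics on a finitely generated group coming from finite generating sets are bi-Lipschitz, hence quasi-isometric, and each makes the group a discrete space of bounded geometry; so the growth type of $\ad_G$ is well-defined.

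I do not expect a genuine obstacle: this is the usual invariance argument for asymptotic dimension, with attention paid to the scale. The points that require care are purely bookkeeping ones: (i) building a unit of slack into $\mu=L\lambda+L+C$ so as to get the \emph{strict} inequality $L(\mathcal U)>\lambda$ demanded in Definition~\ref{def for asdim growth} (the naive choice $\mu=L\lambda+C$ gives only $L(\mathcal U)\geqslant\lambda$); (ii) checking that discarding empty and repeated sets from $\{f^{-1}(V)\}$ cannot increase the multiplicity; and (iii) choosing $k$ so that the affine map $\lambda\mapsto L\lambda+L+C$ is dominated by $\lambda\mapsto k\lambda+k$. Bounded geometry is not actually used in this pull-back argument; it appears in the hypotheses because it is the natural setting for the invariant and because it holds automatically for Cayley graphs of finitely generated groups.
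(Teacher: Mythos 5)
The paper does not actually prove this proposition; it is stated with citations to Bell--Dranishnikov and Dranishnikov, so there is no in-paper argument to compare against. Your pull-back argument is the standard proof and it is correct: the key estimates (multiplicity of the pulled-back cover is at most that of $\mathcal V$; the Lebesgue number drops by at most the affine quasi-isometry factor, with the extra $+L$ slack to preserve the strict inequality) are exactly right, the reduction to a one-sided inequality via a quasi-inverse is sound, and the passage from $\ad_X(\lambda)\leqslant\ad_Y(L\lambda+L+C)$ to the $\preceq$ relation with a single integer $k$ is handled cleanly. One small remark: you observe that bounded geometry is not used, which is true for the pull-back step, but it is what guarantees $\ad_Y(\mu)<\infty$ in the first place (cover by balls around a net and use bounded geometry to bound the multiplicity), so the hypothesis is not purely cosmetic; also, if one insists on Dranishnikov's original definition with uniformly bounded covers, then one should note that $f^{-1}(V)$ has diameter $\leqslant L(\diam V + C)$ by the lower quasi-isometry inequality, so uniform boundedness is preserved — this is a one-line addition that your argument already implicitly supports.
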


We give an alternative (equivalent) definition of the asymptotic dimension growth that is used in our characterization.
\begin{lem}\label{def for asdim growth2}
Let $X$ be a metric space, and define
$$\widetilde{\ad}_X(\lambda)=\min\{\, m_\lambda(\mathcal{U}):\mathcal{U} \mbox{~is~a~cover~of~}X\}-1.$$
Then $\widetilde{\ad}_X \approx \ad_X$.
\end{lem}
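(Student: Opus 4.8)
The plan is to establish the two comparisons $\ad_X\preceq\widetilde{\ad}_X$ and $\widetilde{\ad}_X\preceq\ad_X$ separately, the first by a thickening construction and the second by a shrinking construction. This is the scale-sensitive version of the standard argument identifying the Lebesgue-number and the $r$-multiplicity descriptions of $\asdim$. The only thing to watch is that the rescaling constants relating the two functions be chosen uniformly in the scale $\lambda$, so that the resulting pointwise bounds upgrade to an equivalence of growth types.

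For $\ad_X\preceq\widetilde{\ad}_X$, fix $\lambda>0$ and pick a cover $\mathcal{V}=\{V_i\}$ of $X$ with $m_{3\lambda}(\mathcal{V})=\widetilde{\ad}_X(3\lambda)+1$ (if this is infinite there is nothing to prove). Replace each $V_i$ by its closed $2\lambda$-neighbourhood $U_i=\{x\in X:d(x,V_i)\leqslant 2\lambda\}$ and set $\mathcal{U}=\{U_i\}$. I would then check two things. First, any $A\subseteq X$ with $\diam A\leqslant 2\lambda$ lies in a single $U_i$: choosing $a\in A$ and $i$ with $a\in V_i$, every $a'\in A$ has $d(a',V_i)\leqslant d(a',a)\leqslant 2\lambda$; hence $L(\mathcal{U})\geqslant 2\lambda>\lambda$. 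Second, if $x\in U_{i_1}\cap\cdots\cap U_{i_k}$ for distinct indices, then for each $j$ there is $y_j\in V_{i_j}$ with $d(x,y_j)<3\lambda$, so $B(x,3\lambda)$ meets the $k$ distinct sets $V_{i_1},\dots,V_{i_k}$ and therefore $k\leqslant m_{3\lambda}(\mathcal{V})$; thus $m(\mathcal{U})\leqslant m_{3\lambda}(\mathcal{V})$. This gives $\ad_X(\lambda)\leqslant m(\mathcal{U})-1\leqslant\widetilde{\ad}_X(3\lambda)$.

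For $\widetilde{\ad}_X\preceq\ad_X$, fix $\lambda>0$ and pick a cover $\mathcal{U}=\{U_i\}$ with $L(\mathcal{U})>4\lambda$ and $m(\mathcal{U})=\ad_X(4\lambda)+1$. Shrink it to $\mathcal{V}=\{V_i\}$, where $V_i=\{x\in X:d(x,X\setminus U_i)>\lambda\}\subseteq U_i$. The delicate point is that $\mathcal{V}$ is still a cover: since $L(\mathcal{U})>4\lambda$, every set of diameter at most $4\lambda$ lies in a single member of $\mathcal{U}$, so for $x\in X$ the ball $B(x,2\lambda)$ (of diameter $\leqslant 4\lambda$) satisfies $B(x,2\lambda)\subseteq U_i$ for some $i$, whence $d(x,X\setminus U_i)\geqslant 2\lambda>\lambda$ and $x\in V_i$. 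On the other hand, if $B(x,\lambda)$ meets $V_i$, say $y\in B(x,\lambda)\cap V_i$, then $d(x,y)<\lambda<d(y,X\setminus U_i)$ forces $x\in U_i$; consequently the number of members of $\mathcal{V}$ met by $B(x,\lambda)$ is at most the number of members of $\mathcal{U}$ containing $x$, that is $m_\lambda(\mathcal{V})\leqslant m(\mathcal{U})$. Therefore $\widetilde{\ad}_X(\lambda)\leqslant m_\lambda(\mathcal{V})-1\leqslant\ad_X(4\lambda)$.

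Combining the two estimates, $\ad_X(\lambda)\leqslant\widetilde{\ad}_X(3\lambda)$ and $\widetilde{\ad}_X(\lambda)\leqslant\ad_X(4\lambda)$ for every $\lambda>0$, so $\widetilde{\ad}_X\approx\ad_X$ (the constant $k=4$ works in the definition of $\preceq$, and the argument does not care whether the minima involved are finite). The main — and essentially the only — obstacle is balancing the shrinking step: the neighbourhoods $V_i$ must be narrow enough that $B(x,\lambda)$ can only reach those with $x\in U_i$, yet wide enough that they still cover $X$, and it is exactly the Lebesgue-number bound on the diameters of balls that lets both demands be met with widths linear in $\lambda$, which is what makes this a growth-type equivalence rather than a mere scale-by-scale inequality.
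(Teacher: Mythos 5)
Your proof is correct and follows essentially the same route as the paper's: shrink a cover with large Lebesgue number to bound the $\lambda$-multiplicity, and thicken a cover with small $\lambda$-multiplicity to produce a Lebesgue number. Your constants are chosen more generously (shrinking by $\lambda$ starting from $L(\mathcal{U})>4\lambda$, thickening by $2\lambda$ against $m_{3\lambda}$), which makes the covering check for the shrunk family explicit where the paper is terse, but the underlying idea and both directions of the argument are the same.
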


\begin{proof}
Given $\lambda>0$, suppose $\mathcal{U}$ is a cover of $X$ with $L(\mathcal{U}) > \lambda$. For any $U\in \mathcal{U}$, define the \emph{inner $\lambda-$neighborhood} of $U$ to be
$$N_{-\lambda}(U)=X\setminus N_\lambda(X \setminus U),$$
where $N_\lambda$ denotes the usual $\lambda$-neighborhood of the set,
and we define $$N_{-\lambda}(\mathcal{U})=\{N_{-\lambda}(U): U \in \mathcal{U}\}.$$ Since $L(\mathcal{U}) > \lambda$, $N_{-\lambda}(\mathcal{U})$ is still a cover of $X$. By definition, it is obvious that $m_\lambda(N_{-\lambda}(\mathcal{U})) \leqslant m(\mathcal{U})$, which yields $\widetilde{\ad}_X \preceq \ad_X$.

Conversely suppose $\mathcal{U}$ is a cover of $X$. Consider $N_\lambda(\mathcal{U})$, which has Lebesgue number not less than $\lambda$. It is easy to show $m(N_\lambda(\mathcal{U})) \leqslant m_\lambda (\mathcal{U})$, which implies $\ad_X \preceq \widetilde{\ad}_X$.
\end{proof}

By the preceding lemma, we can use either $\ad_X$ or $\widetilde{\ad}_X$ as the definition for the asymptotic dimension function. Recall  that if there exists a polynomial (subexponential) function $f$ such that $\ad_X \preceq f$, then $X$ is said to have polynomial (subexponential) asymptotic dimension growth.

Dranishnikov has shown that polynomial asymptotic dimension growth implies Yu's Property A, and he gave a class of groups having such property.
\begin{prop}[\cite{dranishnikov2006groups}]
Let $N$ be a finitely generated nilpotent group and $G$ be a finitely generated group with finite asymptotic dimension. Then the wreath product $N \wr G$ has polynomial asymptotic dimension growth. In particular, $\mathbb{Z} \wr \mathbb{Z}$ has polynomial asymptotic dimension growth.
\end{prop}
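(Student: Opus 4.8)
The plan is to pass to the extension $1\to L\to N\wr G\to G\to 1$, where $L=\bigoplus_{g\in G}N$ is the lamp group carrying the metric induced from a fixed word metric on $N\wr G$, reduce the problem to $L$ by a Hurewicz-type argument, and then analyse $L$ scale by scale. Observe first that $\asdim(N\wr G)$ is typically infinite (already $\mathbb{Z}\wr\mathbb{Z}$ contains $\mathbb{Z}^{k}$ for every $k$), so this is genuinely a statement about the \emph{growth} of $\ad_{N\wr G}$; observe also that, $N$ being finitely generated nilpotent, it is polycyclic, hence of finite asymptotic dimension, and it has polynomial volume growth, say $|B_N(e,t)|\le Ct^{d}$. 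The projection $\pi\colon N\wr G\to G$ is $1$-Lipschitz onto a space of finite asymptotic dimension, and the preimage of every point is an isometric copy of $L$. A Hurewicz-type estimate for the asymptotic dimension function, in the spirit of the Bell--Dranishnikov extension theorem \cite{bell2005growth,dranishnikov2006groups} adapted to the growth setting, should then yield $\ad_{N\wr G}\preceq\ad_{L}$, the finite factor $\asdim G+1$ and the rescaling of the scale parameter being absorbed by $\preceq$. By Lemma~\ref{def for asdim growth2} and the quasi-isometry invariance of the growth type, it then suffices to bound $\widetilde{\ad}_{L}$ by a polynomial, using any convenient quasi-isometric model of the metric on $L$.

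The core is to understand $L$ at a fixed scale $\lambda$. Since $L$ acts on itself by isometric left translations, every ball of radius $\lambda$ in $L$ is, up to translation, contained in the ``budget-$\lambda$'' region $\Lambda_\lambda$ consisting of those $h\in L$ whose total lamp height $\sum_{g}|h(g)|_N$ is at most $\lambda$ and whose support is spanned by a subtree of the Cayley graph of $G$, through $e$, of length at most $\lambda$; such $h$ have at most $\lambda$ non-trivial lamps and support inside $B_G(e,\lambda)$. Each $\Lambda_\lambda$ is bounded (of diameter $\le 2\lambda$), so locally there is nothing to do; the whole content is to assemble a single cover of $L$ every point of which has only polynomially-many (in $\lambda$) cover sets within distance $\lambda$. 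I would do this by coarsening the index set: using $\asdim G<\infty$, fix an $(\asdim G+1)$-coloured, uniformly bounded, $R$-disjoint cover of $G$ with $R\gg\lambda$, so that any radius-$\lambda$ budget region is ``carried'' by only a bounded number of blocks; on the blocks meeting each colour class, build covers of the corresponding bounded sets of admissible lamp configurations — bounded powers of $N$ in a suitable sense, to which the product and union theorems for the asymptotic dimension function apply — and splice these along the colouring. The polynomial growth bound $|B_N(e,t)|\le Ct^{d}$ is what keeps the number of blocks, and of admissible configurations, interacting with a fixed ball polynomial rather than exponential in $\lambda$. The outcome is a cover of $L$ of Lebesgue number $>\lambda$ and multiplicity polynomial in $\lambda$, so $\ad_{L}$, hence $\ad_{N\wr G}$, is dominated by a polynomial. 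For the final assertion one specialises to $N=G=\mathbb{Z}$: then $L=\bigoplus_{\infty}\mathbb{Z}$, the construction gives $\ad_{L}(\lambda)\preceq\lambda$, and the $\mathbb{Z}$-quotient adds only a bounded term, so $\mathbb{Z}\wr\mathbb{Z}$ has linear, in particular polynomial, asymptotic dimension growth.

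The hard part will be precisely this globalisation step — turning ``every $\lambda$-ball of $L$ is a bounded budget region'' into ``$L$ has a global cover of polynomial multiplicity''. A naive construction fails, since the support of a radius-$\lambda$ ball may sit at any of roughly $|B_G(e,\lambda)|$ positions in $G$ — exponentially many when $G$ has exponential volume growth — and, on a fixed such support, there are exponentially many admissible lamp configurations. The resolution must be two-sided: finite asymptotic dimension of $G$ is what lets one replace the index set by a family of blocks of uniformly bounded overlap at each scale, while the polynomial growth of the nilpotent group $N$ is what caps, polynomially in $\lambda$, the contribution of each block; only the two together keep the multiplicity polynomial. Carrying out the splicing carefully — in particular tracking how the multiplicative constants compound through the product and union theorems while simultaneously preserving a Lebesgue number exceeding $\lambda$ — is where the real work lies, and it is also what pins down the degree of the resulting polynomial in terms of $\asdim N$, $\asdim G$ and the growth degree $d$ of $N$.
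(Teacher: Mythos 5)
The paper does not prove this proposition: it is stated with the citation $\cite{dranishnikov2006groups}$ and used only as a source of examples with polynomial asymptotic dimension growth. There is therefore no ``paper's own proof'' for your argument to be compared against; what you have written should be measured against Dranishnikov's original paper. Against that benchmark your high-level architecture is sound — split off the lamp group $L=\bigoplus_{G}N$, invoke a Hurewicz-type inequality for the asymptotic dimension function, and control $L$ scale by scale using both $\asdim G<\infty$ and the polynomial volume growth of $N$ — and you have correctly identified where the two hypotheses enter. But as written there are two genuine gaps.

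First, the reduction step is misstated. The Hurewicz formula for the asymptotic dimension function controls $\ad_X(\lambda)$ in terms of $\asdim Y$ and $\sup_y \ad_{\pi^{-1}(B_R(y))}(\lambda)$ with $R$ comparable to $\lambda$, not in terms of $\ad_L(\lambda)$. The set $\pi^{-1}(B_R(y))$ is a union of $|B_G(y,R)|$ cosets of $L$ glued together by cursor moves, and its geometry couples the $G$-direction and the $L$-direction; your line ``$\ad_{N\wr G}\preceq\ad_L$'' compresses exactly the part of the estimate where this coupling has to be handled. Second — and you do honestly flag this — the ``splicing along an $(\asdim G+1)$-coloured, $R$-disjoint cover of $G$'' is the entire content of the theorem, and your sketch supplies no mechanism for it. What you would need is a cover of a bounded-budget lamp region, i.e.\ of a finitely generated nilpotent group of the form $\bigoplus_{S}N$ with $S\subset G$ large, whose $\lambda$-multiplicity is polynomial in $\lambda$ \emph{independently} of $|S|$. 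Neither the product theorem for asymptotic dimension (which gives a bound that grows with $|S|$) nor polynomial growth of $N$ on its own (which bounds cardinalities, not multiplicities) yields this; it requires working directly with the lamplighter metric on $L$ and using that a radius-$\lambda$ move only alters lamps along a path of length $\le\lambda$. Until that estimate is carried out and plugged into the Hurewicz bound with the scale parameters tracked, the argument is a plan rather than a proof, and the ``linear'' growth you assert for $\mathbb{Z}\wr\mathbb{Z}$ in the last sentence is unsubstantiated — Dranishnikov's theorem gives polynomial growth, and the degree one actually obtains depends on the constants that your splicing step would have to produce.
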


\subsection{CAT(0) Cube Complexes}
We recall basic notions and results on the structure of CAT(0) cube complexes. We omit some details and most of the proofs but direct the readers to \cite{BH99, chepoi2000graphs, gromov1987hyperbolic, niblo1998geometry, sageev1995ends} for more information.

A \emph{cube complex} is a polyhedral complex in which each cell is isometric to a Euclidean cube and the gluing maps are isometries. The \emph{dimension} of the complex is the maximum of the dimensions of the cubes. For a cube complex $X$, we can associate it with the intrinsic pseudo-metric $d_{int}$, which is the minimal pseudo-metric on $X$ such that each cube embeds isometrically. When $X$ has finite dimension, $d_{int}$ is a complete geodesic metric on $X$. See \cite{BH99} for a general discussion on polyhedral complex and the associated intrinsic metric.

There is also another metric associated with $X$. Let  $X^{(1)}$ be the 1-skeleton of $X$, that is a graph with the vertex set $V=X^{(0)}$. We equip $V$ with the edge-path metric $d$, which is the minimal number of edges in a path connecting two given vertices. Clearly, when $X^{(1)}$ is connected, $d$ is a geodesic metric on $V$.
For $x,y\in V$,  the \emph{interval} is defined by $[x,y]=\{z\in V:d(x,y)=d(x,z)+d(x,y)\}$, that is it consists of all points on any geodesic between $x$ and $y$.

A geodesic metric space $(X,d)$ is \emph{CAT(0)} if all geodesic triangles in $X$ are slimmer than the comparative triangle in the Euclidean space. For a cube complex $(X,d_{int})$, Gromov has given a combinatorial characterization of the CAT(0) condition~\cite{gromov1987hyperbolic}: $X$ is CAT(0) if and only if it is simply connected and the link of each vertex is a flag complex (see also \cite{BH99}).

Another characterization of the CAT(0) condition was obtained by Chepoi \cite{chepoi2000graphs} (see also~\cite{roller1998poc}): a cube complex $X$ is CAT(0) if and only if for any $x,y,z\in V$, the intersection $[x,y] \cap [y,z] \cap [z,x]$ consists of a single point $\mu(x,y,z)$, which is called the \emph{median} of $x,y,z$. In this case, we call the graph $X^{(1)}$ a \emph{median graph}; and $V$ equipped with the ternary operator $m$ is indeed a median algebra~\cite{isbell1980median}. In particular, the following equations hold: $\forall x,y,z,u,v \in V$,
\begin{itemize}
  \item[M1.] $\mu(x,x,y)=x$;
  \item[M2.] $\mu(\sigma(x),\sigma(y),\sigma(z))=\mu(x,y,z)$, where $\sigma$ is any permutation of $\{x,y,z\}$;
  \item[M3.] $\mu(\mu(x,y,z),u,v)=\mu(\mu(x,u,v),\mu(y,u,v),z)$.
\end{itemize}
Obviously, $\mu(x,y,z)\in [x,y]$, and $[x,y]=\{z\in V: \mu(x,y,z)=z\}$.
\begin{lem}\label{basic median}
Let $x,y,z,w \in V$ such that $z,w\in [x,y]$. Then $z\in[x,w]$ implies $w \in [z,y]$.
\end{lem}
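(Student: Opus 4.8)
The plan is to bypass the median operator entirely and argue with the metric description of intervals, $p\in[a,b]$ if and only if $d(a,p)+d(p,b)=d(a,b)$; with this description the lemma reduces to a short additivity computation. First I would translate the three hypotheses into distance identities: $z\in[x,y]$ gives $d(x,z)+d(z,y)=d(x,y)$; $w\in[x,y]$ gives $d(x,w)+d(w,y)=d(x,y)$; and $z\in[x,w]$ gives $d(x,z)+d(z,w)=d(x,w)$. Substituting the third identity into the second yields $d(x,z)+d(z,w)+d(w,y)=d(x,y)$. Comparing this with the first identity and cancelling $d(x,z)$ gives $d(z,y)=d(z,w)+d(w,y)$, which is exactly the assertion $w\in[z,y]$.

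If instead one prefers to stay within the median-algebra formalism, the same conclusion follows from the identities M1--M3. Rewrite the hypotheses as $\mu(x,y,z)=z$, $\mu(x,y,w)=w$ and $\mu(x,w,z)=z$. Then, using M2 to permute arguments and M3 to expand, $\mu(z,y,w)=\mu(\mu(z,x,w),y,w)=\mu(\mu(z,y,w),\mu(x,y,w),w)=\mu(\mu(z,y,w),w,w)=w$, where the last equality is M1; hence $w=\mu(z,y,w)\in[z,y]$. I would present the metric argument, as it is the shorter and more transparent of the two.

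There is no real obstacle here; both arguments are mechanical. The only points requiring a little care are using a consistent form of the interval definition (the displayed formula in the text contains an evident typo, $d(x,y)=d(x,z)+d(x,y)$ in place of $d(x,y)=d(x,z)+d(z,y)$), and, in the algebraic version, correctly tracking the permutations of arguments when invoking M2 and M3.
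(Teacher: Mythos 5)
Your metric argument is correct and is a genuinely different route from the paper's. The paper works purely inside the median algebra, starting from $\mu(z,x,w)=z$ and $\mu(x,w,y)=w$ and then applying M3 and M1 to conclude $\mu(z,w,y)=w$; your first approach instead unfolds the definition $[a,b]=\{p:d(a,p)+d(p,b)=d(a,b)\}$ and cancels $d(x,z)$ after substituting the third identity into the second. The metric version is shorter, needs only the definition of an interval and no median axioms, and in fact holds in an arbitrary metric space; the median version is the one that generalizes to abstract median algebras (and hence to the coarse median setting that the paper cares about later), where no metric is presupposed. Your second, algebraic computation is essentially the paper's proof verbatim, up to an application of M2 (you compute $\mu(z,y,w)$ where the paper computes $\mu(z,w,y)$). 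One small observation: the metric proof does not require the hypothesis $z\in[x,y]$ to be stated separately, since it follows from $z\in[x,w]\subseteq[x,y]$ once $w\in[x,y]$; but including all three identities makes the cancellation transparent and costs nothing. You are also right that the displayed interval formula in the paper contains a typo ($d(x,z)+d(x,y)$ should read $d(x,z)+d(z,y)$); the surrounding text makes the intended meaning clear.
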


\begin{proof}
Since $z\in [x,w]$ and $w\in [x,y]$, we have $\mu(z,x,w)=z$ and $\mu(x,w,y)=w$. So,
$\mu(z,w,y)=\mu(\mu(z,x,w),w,y)=\mu(\mu(z,w,y),\mu(x,w,y),w)=\mu(\mu(z,w,y),w,w)=w,$
which implies $w\in [z,y]$.
\end{proof}

\begin{lem}\label{weakly modular}
For $x,y,z \in V$ and $d(z,y)=1$, $[x,z] \subseteq [x,y]$ or $[x,y] \subseteq [x,z]$.
\end{lem}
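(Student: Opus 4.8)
The plan is to read the dichotomy off from the single median $\mu(x,y,z)$, using only the interval description from the excerpt and the axioms M1--M3.

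First I would record the consequence of the hypothesis $d(z,y)=1$: the median $\mu(x,y,z)$ lies in $[y,z]$, and any point $u\in[y,z]$ satisfies $d(y,u)+d(u,z)=d(y,z)=1$, so $\mu(x,y,z)\in\{y,z\}$. If $\mu(x,y,z)=z$ then $z\in[x,y]$; if $\mu(x,y,z)=y$ then (by M2) $y\in[x,z]$. Thus the lemma reduces to the general claim that $c\in[a,b]$ implies $[a,c]\subseteq[a,b]$: applying this with $(a,b,c)=(x,y,z)$ settles the first case, giving $[x,z]\subseteq[x,y]$, and with $(a,b,c)=(x,z,y)$ the second, giving $[x,y]\subseteq[x,z]$.

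For the general claim I would take $w\in[a,c]$, so that $\mu(a,c,w)=w$, substitute this expression for $w$, and apply M3: $\mu(a,b,w)=\mu(\mu(a,c,w),a,b)=\mu(\mu(a,a,b),\mu(c,a,b),w)$. By M1, $\mu(a,a,b)=a$; by M2 and the hypothesis $c\in[a,b]$, $\mu(c,a,b)=\mu(a,b,c)=c$. Hence $\mu(a,b,w)=\mu(a,c,w)=w$, so $w\in[a,b]$, and since $w$ was arbitrary, $[a,c]\subseteq[a,b]$.

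The one step needing care is picking the correct instance of M3 in this computation, namely collapsing the triple $\mu(a,c,w)$ against the pair $(a,b)$ so that precisely the medians $\mu(a,a,b)$ and $\mu(c,a,b)$ appear and are annihilated by M1 and by $c\in[a,b]$ respectively; everything else is routine bookkeeping. It is worth noting that, apart from forcing $\mu(x,y,z)\in\{y,z\}$, the hypothesis $d(z,y)=1$ plays no further role, so the conclusion persists whenever $\mu(x,y,z)$ happens to equal $y$ or $z$.
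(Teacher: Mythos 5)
Your argument is correct, and it takes a genuinely different route from the paper's. The paper proves this lemma purely by citation: Chepoi's theorem gives that $X^{(1)}$ is a median graph, hence weakly modular (and in fact bipartite), so $d(x,y)\neq d(x,z)$; combined with $|d(x,y)-d(x,z)|\leqslant d(y,z)=1$ this forces one of $z\in[x,y]$, $y\in[x,z]$, and the inclusion of intervals then follows from the elementary metric fact that geodesics extend. Your proof instead stays entirely inside the median-algebra formalism: you first observe $[y,z]=\{y,z\}$ (a point where the edge hypothesis is used, and the only one), deduce that $\mu(x,y,z)$ is $y$ or $z$, and then prove the purely algebraic statement $c\in[a,b]\Rightarrow[a,c]\subseteq[a,b]$ by a single well-chosen instance of M3 followed by M1 and M2. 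The computation is right: with $x\mapsto a$, $y\mapsto c$, $z\mapsto w$, $u\mapsto a$, $v\mapsto b$ in M3, one gets $\mu(\mu(a,c,w),a,b)=\mu(\mu(a,a,b),\mu(c,a,b),w)=\mu(a,c,w)=w$, hence $\mu(a,b,w)=w$. What your approach buys is self-containment and generality: it is in the same style as the paper's own Lemma 2.7 and holds in any median algebra, without invoking weak modularity or bipartiteness of median graphs. What the paper's approach buys is brevity, at the cost of outsourcing the content to cited graph-theoretic facts.
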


\begin{proof}
By Chepoi's result~\cite{chepoi2000graphs}, $X^{(1)}$ is a median graph, hence it is weakly modular (see~\cite{chepoi2000graphs}). So $d(x,y)\neq d(x,z)$, which implies $d(x,y)=d(x,z)+1$ or $d(x,z)=d(x,y)+1$, i.e. $[x,z] \subseteq [x,y]$ or $[x,y] \subseteq [x,z]$.
\end{proof}

A CAT(0) cubical complex $X$ can be equipped with a set of \emph{hyperplanes}~\cite{chatterji2005wall, niblo1998geometry, nica2004cubulating, sageev1995ends}. Each hyperplane does not intersect itself, and divides the space into two halfspaces. Given two hyperplanes $h,k$, if the four possible intersections of halfspaces are all nonempty, then we say $h$ crosses $k$, denoted by $h\pitchfork k$. This occurs if and only if $h$ and $k$ cross a common cube $C$ (also denoted by $h\pitchfork C$). Furthermore~\cite{sageev1995ends}, given a maximal collection of pairwise intersecting hyperplanes, there exists a unique cube which all of them cross. Thus, the dimension of $X$ is the maximal number of pairwise intersecting hyperplanes. We can also define intervals in the language of hyperplanes: $[x,y]$ consists of points which lie in all halfspaces containing $x$ and $y$.

We call a subset $Y \subseteq V$ \emph{convex}, if for any $x,y\in Y$, $[x,y]\subseteq Y$. Obviously, halfspaces are convex since any geodesic crosses a hyperplane at most once~\cite{niblo1998geometry, sageev1995ends}. This also implies
$$d(x,y)= \sharp \{\mbox{~hyperplane~}h: h \mbox{~separates~} x \mbox{~from~} y\}.$$

\subsection{Coarse Median Spaces}
According to Gromov, hyperbolic spaces can be considered locally as a coarse version of trees, in the sense that every finite subset can be approximated by a finite tree in a controlled way~\cite{gromov1987hyperbolic}. If one wants to approximate a space locally by finite median algebras (graphs), this would turn to the definition of coarse median spaces introduced by Bowditch. See~\cite{bowditch2013coarse, bowditch2014embedding, zeidler2013coarse} for details.

\begin{defn}[\cite{bowditch2013coarse}]
Let $(X,\rho)$ be a metric space, and $\mu\colon X^3 \rightarrow X$ be a ternary operation. We say that $(X,\rho,\mu)$ is a \emph{coarse median space} and $\mu$ is a \emph{coarse median} on $X$, if the following conditions hold:
\begin{itemize}
  \item[C1.] There exist constants $K, H(0)>0$ such that $\forall a,b,c,a',b',c' \in X,$
        $$ \rho(\mu(a,b,c), \mu(a',b',c')) \leqslant K(\rho(a,a')+\rho(b,b')+\rho(c,c'))+H(0).$$
  \item[C2.] There exists a function $H\colon  \mathbb{N} \rightarrow [0,+\infty)$ with the following property. For a finite subset $A\subseteq X$ with $1 \leqslant |A| \leqslant p$, there exists a finite median algebra $(\Pi, \rho_{\Pi}, \mu_{\Pi})$, and maps $\pi\colon A \rightarrow \Pi$, $\lambda\colon  \Pi \rightarrow X$ such that $\forall x,y,z \in \Pi, a\in A$,
      $$\rho(\lambda \mu_{\Pi}(x,y,z), \mu(\lambda x, \lambda y, \lambda z)) \leqslant h(p),$$
      and
      $$\rho(a, \lambda \pi a) \leqslant h(p).$$
\end{itemize}
We refer to $K,H$ as the \emph{parameters} of $(X,\rho,\mu)$.
Furthermore, if there exists $d \in \mathbb{N}$, such that we can always choose the median algebra $\Pi$ in condition (C2) above of rank at most $\rank$, then we say $X$ has (coarse) \emph{rank} at most $\rank$.

A finitely generated group is said to be \emph{coarse median} if some Cayley graph has a coarse median.
\end{defn}

Note that, by definition, a coarse median on a group is not required to be equivariant under the group action.

\begin{rem}\label{median assup}
According to Bowditch, without loss of generality, we may always assume that $\mu$ satisfies the median axioms M1 and M2: for all $a,b,c\in X$,
\begin{itemize}
  \item[M1.] $\mu(a,a,b)=a$;
  \item[M2.] $\mu(a,b,c)=\mu(b,c,a)=\mu(b,a,c)$.
\end{itemize}
\end{rem}

A large class of groups and spaces have been shown to be coarse median, including Gromov's hyperbolic groups, right-angled Artin groups, mapping class groups, CAT(0) cube complexes, etc.~\cite{bowditch2013coarse}. Bowditch has proved that coarse median groups have Property of Rapid Decay~\cite{bowditch2014embedding}, quadratic Dehn's function~\cite{bowditch2013coarse}, etc.
This yielded a unified way to prove these properties for the above-listed groups. Recently, \u{S}pakula and Wright have proved that coarse median spaces
of finite rank and of at most exponential volume growth have Yu's Property A~\cite{spakula2016coarse}.

\section{Characterization for asymptotic dimension growth}
In this section, we establish a characterization for asymptotic dimension growth and obtain several interesting consequences of this main result.
For instance, we get a characterization for a group to have finite asymptotic dimension.

\begin{thm}\label{main prop}
Let $(X,d)$ be a discrete metric space, and $f\colon \mathbb{R}_+ \rightarrow \mathbb{R}_+$ be a function. Then the following are equivalent.
\begin{enumerate}
  \item $\ad_X \preceq f$;
  \item There exists a function $g\colon \mathbb{R}_+ \rightarrow \mathbb{R}_+$ which has the same growth type as $f$, such that $\forall l \in \mathbb{N}$, $\forall k=1,2,\ldots, 3l$, $\forall x \in X$, we can assign a subset $S(x,k,l) \subseteq X$, satisfying:
        \begin{enumerate}[i)]
          \item $\forall l \in \mathbb{N}$, $\exists S_l >0$, such that $S(x,k,l) \subseteq B(x,S_l)$ for all $k=1,\ldots, 3l$;
          \item $\forall l \in \mathbb{N}$, $\forall k,k'$ with $1 \leqslant k \leqslant k' \leqslant 3l$, $\forall x \in X$,
                we have $S(x,k,l) \subseteq S(x,k',l)$;
          \item $\forall x,y \in X$ with $d(x,y) \leqslant l$, we have:
                \begin{itemize}
                  \item $S(x,k-d(x,y),l) \subset S(x,k,l) \cap S(y,k,l)$, for $k=d(x,y)+1,\ldots,3l$;
                  \item $S(x,k+d(x,y),l) \supset S(x,k,l) \cap S(y,k,l)$, for $k=1,\ldots,3l-d(x,y)$;
                \end{itemize}
          \item $\forall l \in \mathbb{N}$, $\forall k=1,2,\ldots,3l$, $\forall x \in X$, we have $\sharp S(x,k,l) \leqslant g(l)$.
        \end{enumerate}
\end{enumerate}
\end{thm}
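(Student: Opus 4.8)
The plan is to prove the two directions separately, with the implication $(2)\Rightarrow(1)$ being the conceptual heart and $(1)\Rightarrow(2)$ being a more routine repackaging. Throughout, I would work with the reformulation $\widetilde{\ad}_X \approx \ad_X$ from Lemma~\ref{def for asdim growth2}, so that controlling $\ad_X$ amounts to producing, for each scale $\lambda$, a uniformly bounded cover $\mathcal{U}$ with small $\lambda$-multiplicity $m_\lambda(\mathcal{U})$.

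For $(2)\Rightarrow(1)$: fix $l\in\mathbb{N}$, and use the assigned sets $S(x,k,l)$ to build a cover at scale $\sim l$. The idea, following the Brown--Ozawa/Kaimanovich-style argument, is that the family $\{S(x,k,l)\}_x$ for a \emph{fixed} intermediate value of $k$ (say $k$ ranging in a block near $l$) behaves like a nested system of ``balls'' that are coarsely invariant under moving the centre a bounded distance: property (iii) says precisely that changing the basepoint by $d(x,y)\le l$ changes the index $k$ by at most $d(x,y)$, so these sets are coarsely well-defined as functions of the index alone. One then defines, for each $x$, the ``boundary index'' or uses a telescoping/colouring trick: partition the range $\{1,\dots,3l\}$ and for each residue class assign $x$ to the set $S(x,k,l)$ where $k$ is chosen canonically from $x$; distinct residue classes give the layers $X_0,\dots,X_n$ of the asymptotic dimension definition, and within a layer the nesting (ii) together with the basepoint-stability (iii) forces the chosen sets to be either nested or $l$-disjoint. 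The bound (iv), $\sharp S(x,k,l)\le g(l)$, controls the mesh of the resulting cover, and (i) is used to ensure the cover elements are genuinely bounded. Counting how many of these sets a ball $B(x,l)$ can meet reduces, via (iii), to counting how many distinct indices $k$ can be ``active'' near $x$, and since (iii) controls the index shift linearly in $d(x,y)$, each point of $B(x,l)$ sees an index within $l$ of $k$, giving an $l$-multiplicity bounded by something comparable to $g(l)$; hence $\ad_X(\lambda)\preceq g(\lambda)\thickapprox f(\lambda)$ after rescaling $\lambda$ by the constants $S_l$.

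For $(1)\Rightarrow(2)$: assume $\ad_X\preceq f$, so by Lemma~\ref{def for asdim growth2} for each $l$ there is a uniformly bounded cover $\mathcal{U}_l$ with $m_l(\mathcal{U}_l)\le g(l)+1$ for a suitable $g\thickapprox f$ and with $\mathrm{mesh}(\mathcal{U}_l)\le S_l$. The natural candidate is to set $S(x,k,l)$ to be the union of those members of (a rescaled version of) $\mathcal{U}_{3l}$ that come within distance $k$ of $x$ — i.e. $S(x,k,l)=\bigcup\{U\in\mathcal{U}_{3l}: d(x,U)<k\}$ or a variant thereof, possibly intersected with a ball to enforce (i). Then (ii) is immediate from monotonicity in $k$; (i) holds because the mesh is bounded and only finitely many (at most $\sim g$) members are involved; the two inclusions in (iii) are exactly the triangle inequality, $d(y,U)\le d(x,U)+d(x,y)$ and vice versa; and (iv) follows from the $l$-multiplicity bound, since the members of $\mathcal{U}_{3l}$ meeting $B(x,k)\subseteq B(x,3l)$ number at most $g(3l)+1$, whence $\sharp S(x,k,l)$ is bounded by $(g(3l)+1)\cdot\mathrm{mesh}$, which is again of growth type $f$.

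The main obstacle I anticipate is in $(2)\Rightarrow(1)$: turning the nested, basepoint-stable family $\{S(x,k,l)\}$ into an honest cover whose layers are genuinely $r$-disjoint (not merely nested-or-far). This requires choosing the index $k$ for each point $x$ \emph{consistently} — so that if $x,y$ are close then either their chosen sets coincide or one is forced inside the other by the right amount — and then using the integer gap $3l$ versus the displacement budget $d(x,y)\le l$ to get genuine separation of the remaining classes. Getting the bookkeeping of the indices right (why $3l$ suffices: one needs room for the ``outer'' index, the ``inner'' index, and the displacement) is the delicate point, and it is exactly here that property (iii) with its symmetric pair of inclusions, rather than a one-sided estimate, is essential. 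The constants relating $l$ to the actual Lebesgue scale $\lambda$ will absorb the $S_l$ from (i), and one checks this respects the $\preceq$-relation, so no growth type is lost.
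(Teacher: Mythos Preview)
Your proposal has genuine gaps in both directions.

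\textbf{Direction $(2)\Rightarrow(1)$.} You are aiming at the wrong target. You try to manufacture $r$-disjoint layers by choosing, for each $x$, a canonical index $k$ and then arguing about separation; you correctly sense this bookkeeping is delicate, and in fact it is unnecessary. The paper never produces disjoint families at all: it works directly with the $r$-multiplicity reformulation $\widetilde{\ad}_X$ from Lemma~\ref{def for asdim growth2}. Fix the single value $k=l$, set $H=\bigcup_{x}S(x,l,l)$, and for each $h\in H$ define the \emph{fibre}
\[
A_h=\{y\in X: h\in S(y,l,l)\}.
\]
Then $\{A_h\}_{h\in H}$ is a cover (any $x$ lies in $A_h$ for $h\in S(x,l,l)$), has mesh $\leqslant 2S_l$ by~(i), and its $l$-multiplicity is bounded because $B(x,l)\cap A_h\neq\emptyset$ means some $y$ with $d(x,y)\leqslant l$ has $h\in S(y,l,l)\subseteq S(x,2l,l)$ by~(iii), whence the number of such $h$ is $\leqslant \sharp S(x,2l,l)\leqslant g(l)$ by~(iv). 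That is the whole argument. The ``why $3l$'' question you raise has the simple answer: one copy of $l$ for the fixed index, one for the displacement in~(iii), and the third just gives slack so that $2l\leqslant 3l$ keeps~(iv) applicable. No residue classes, no canonical choice of $k$, no disjointness. You also have the roles of~(i) and~(iv) reversed: (i) bounds the mesh, (iv) bounds the multiplicity.

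\textbf{Direction $(1)\Rightarrow(2)$.} Your candidate $S(x,k,l)=\bigcup\{U\in\mathcal U_{3l}:d(x,U)<k\}$ does satisfy (i)--(iii), but it fails~(iv): the theorem does not assume bounded geometry, so cover elements $U$ may be infinite, and even if they are finite your estimate ``$(g(3l)+1)\cdot\mathrm{mesh}$'' confuses diameter with cardinality. The paper's fix is to pick one representative $x_i\in U_i$ and set
\[
S(x,k,l)=\{x_i:B(x,k)\cap U_i\neq\emptyset\},
\]
a set of \emph{points}, not a union of sets. Then $\sharp S(x,k,l)\leqslant m_{3l}(\mathcal U)\leqslant g(l)$ directly, and (i)--(iii) follow from the triangle inequality exactly as you outlined.
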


\begin{proof}
~\\[-0.5cm]
\begin{itemize}
  \item $1 \Rightarrow 2$: By Lemma \ref{def for asdim growth2}, we can assume that there exists a function $g\colon \mathbb{R}_+ \rightarrow \mathbb{R}_+$ with $g\thickapprox f$ such that $\forall l \in \mathbb{N}$, there exists a uniformly bounded cover $\mathcal{U}$ of $X$ with $m_{3l}(\mathcal{U}) \leqslant g(l)$. Suppose $\mathcal{U} = \{U_i:i \in I\}$, and choose $x_i \in U_i$.
      For $k=1,2,\ldots, 3l$ and $x\in X$, we define
      $$S(x,k,l)=\{x_i: B(x,k) \cap U_i \neq \emptyset\}.$$
      Now let us check the four properties in Condition 2.
      \begin{enumerate}[i)]
        \item If $B(x,k) \cap U_i \neq \emptyset$, we can assume $y \in B(x,k) \cap U_i$. Now $d(y,x_i) \leqslant \mathrm{mesh}(\mathcal{U})$, so $d(x,x_i) \leqslant k+\mathrm{mesh}(\mathcal{U}) \leqslant 3l+\mathrm{mesh}(\mathcal{U})$. In other words,
            $$S(x,k,l) \subseteq B(x,3l+\mathrm{mesh}(\mathcal{U})).$$
        \item It is immediate, by our definition of sets $S(x,k,l)$.
        \item $\forall x,y \in X$ with $d(x,y) \leqslant l$, $\forall k=d(x,y)+1,\ldots,3l$, we have:
              $$S(x,k-d(x,y),l)=\{x_i: B(x,k-d(x,y)) \cap U_i \neq \emptyset\}.$$
              Now if $B(x,k-d(x,y)) \cap U_i \neq \emptyset$, we can assume $z \in B(x,k-d(x,y)) \cap U_i$, i.e. $z\in U_i$ and $d(z,x) \leqslant k-d(x,y)$. So $d(z,y) \leqslant k$, i.e. $z \in B(y,k) \cap U_i$. So $B(y,k) \cap U_i \neq \emptyset$, which implies
              $$S(x,k-d(x,y),l) \subset S(x,k,l) \cap S(y,k,l).$$

              On the other hand, $\forall k'=1,\ldots,3l-d(x,y)$, suppose $x_j \in S(x,k',l) \cup S(y,k',l)$. We can assume that $x_j \in S(y,k',l)$, i.e. $B(y,k')\cap U_j\neq \emptyset$, which implies $B(x,k'+d(x,y)) \cap U_j \neq \emptyset$. So we have:
              $$S(x,k'+d(x,y),l) \supset S(x,k',l) \cap S(y,k',l).$$
        \item $\forall l \in \mathbb{N}$, $\forall k=1,2,\ldots,3l$, $\forall x \in X$, we have
              $$\sharp S(x,k,l)=\sharp \{x_i: B(x,k) \cap U_i \neq \emptyset\} \leqslant m_{3l}(\mathcal{U}) \leqslant g(l).$$
      \end{enumerate}

  \item $2 \Rightarrow 1$: $\forall l \in \mathbb{N}$, let $H=\bigcup\limits_{x\in X}S(x,l,l)$. Also, $\forall h \in H$, we define $A_h=\{y: h \in S(y,l,l)\}$.
        We define $\mathcal{U}_l=\{A_h: h\in H\}$. Since $\forall x\in X$, if we take $h\in S(x,l,l)$, then $x\in A_h$. So $\mathcal{U}_l$ is a cover of $X$.
        Since $\exists S_l >0$ such that $S(x,l,l) \subseteq B(x,S_l)$, we know that $d(h,y) \leqslant S_l$ for all $y\in A_h$, which implies $\mathrm{mesh}(\mathcal{U}_l) \leqslant S_l$.
        Finally, let us analyse $m_l(\mathcal{U}_l)$. $\forall x \in X$, consider $h\in H$ with $B(x,l) \cap A_h \neq \emptyset$. Take $y\in B(x,l) \cap A_h$, i.e. $d(y,x) \leqslant l$ and $h \in S(y,l,l)$. Now by assumptions in Condition 2, we have
        $$S(y,l,l) \subseteq S(x,l+d(x,y),l) \subseteq S(x,2l,l).$$
        So $m_l(\mathcal{U}_l) \leqslant \sharp S(x,2l,l) \leqslant g(l)$. 
        Finally by Lemma \ref{def for asdim growth2}, we have 
        $$\ad_X \thickapprox \widetilde{\ad}_X \leqslant g \thickapprox f.$$
\end{itemize}

\end{proof}

Taking in the preceding theorem a constant function $f$, we obtain a characterization for finite asymptotic dimension.

\begin{cor}\label{char1}
Let $(X,d)$ be a discrete metric space, $n \in \mathbb{N}$. Then the following are equivalent:
\begin{enumerate}
  \item $\asdim X \leqslant n$;
  \item $\forall l \in \mathbb{N}$, $\forall k=1,2,\ldots, 3l$, $\forall x \in X$, we can assign a subset $S(x,k,l) \subseteq X$, satisfying:
        \begin{enumerate}[i)]
          \item $\forall l \in \mathbb{N}$, $\exists S_l >0$, such that $S(x,k,l) \subseteq B(x,S_l)$ for all $k=1,\ldots, 3l$;
          \item $\forall l \in \mathbb{N}$, $\forall k,k'$ with $1 \leqslant k \leqslant k' \leqslant 3l$, $\forall x \in X$,
                we have $S(x,k,l) \subseteq S(x,k',l)$;
          \item $\forall x,y \in X$ with $d(x,y) \leqslant l$, we have:
                \begin{itemize}
                  \item $S(x,k-d(x,y),l) \subset S(x,k,l) \cap S(y,k,l)$, for $k=d(x,y)+1,\ldots,3l$;
                  \item $S(x,k+d(x,y),l) \supset S(x,k,l) \cap S(y,k,l)$, for $k=1,\ldots,3l-d(x,y)$;
                \end{itemize}
          \item $\forall l \in \mathbb{N}$, $\forall k=1,2,\ldots,3l$, $\forall x \in X$, we have $\sharp S(x,k,l) \leqslant n+1$.
        \end{enumerate}
  \end{enumerate}
\end{cor}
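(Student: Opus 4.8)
The plan is to re-run the proof of Theorem~\ref{main prop} essentially verbatim, replacing the growth functions $f$ and $g$ throughout by the constant $n+1$, and using the $r$-multiplicity characterization of asymptotic dimension (the Proposition of~\cite{BD05} recalled above: $\asdim X \leqslant n$ iff for every $r>0$ there is a uniformly bounded cover $\mathcal U$ with $m_r(\mathcal U)\leqslant n+1$) in place of Lemma~\ref{def for asdim growth2}. One cannot simply substitute the constant function $f\equiv n$ into Theorem~\ref{main prop} and quote it as a black box: the relation $\preceq$ only controls a function up to a multiplicative constant, so $\ad_X\preceq n$ a priori forces only that $\asdim X$ is finite, not that it is $\leqslant n$. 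The content of the corollary is precisely that the construction in the proof of Theorem~\ref{main prop} preserves the \emph{exact} bound $n+1$.

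For $(1)\Rightarrow(2)$, assume $\asdim X\leqslant n$. For each $l\in\mathbb N$ apply the $r$-multiplicity characterization with $r=3l$ to obtain a uniformly bounded cover $\mathcal U=\{U_i:i\in I\}$ with $m_{3l}(\mathcal U)\leqslant n+1$; choose $x_i\in U_i$ and set $S(x,k,l)=\{x_i:B(x,k)\cap U_i\neq\emptyset\}$ for $k=1,\dots,3l$. Properties (i), (ii), (iii) are checked exactly as in the proof of Theorem~\ref{main prop}, since they do not involve any cardinality bound, and (iv) is immediate from $\sharp S(x,k,l)\leqslant m_{3l}(\mathcal U)\leqslant n+1$. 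For $(2)\Rightarrow(1)$, given the assignment $S(x,k,l)$, fix $l$ and put $H=\bigcup_{x\in X}S(x,l,l)$, $A_h=\{y\in X:h\in S(y,l,l)\}$ for $h\in H$, and $\mathcal U_l=\{A_h:h\in H\}$. As in the proof of Theorem~\ref{main prop}, (ii) and (iii) show that $\mathcal U_l$ is a cover with $\mathrm{mesh}(\mathcal U_l)\leqslant S_l$, and that for every $x\in X$ any $A_h$ meeting $B(x,l)$ satisfies $h\in S(x,2l,l)$, whence $m_l(\mathcal U_l)\leqslant\sharp S(x,2l,l)\leqslant n+1$. Then for an arbitrary $r>0$, taking $l=\lceil r\rceil$ and using $B(x,r)\subseteq B(x,l)$ gives a uniformly bounded cover $\mathcal U_l$ with $m_r(\mathcal U_l)\leqslant n+1$, so $\asdim X\leqslant n$ by the $r$-multiplicity characterization.

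There is no real obstacle beyond bookkeeping: the only points to watch are that every estimate in the proof of Theorem~\ref{main prop} respects the constant bound $n+1$ rather than merely a growth type, and that passing from an arbitrary real $r>0$ to the integer $l=\lceil r\rceil$ loses nothing. No new idea is needed.
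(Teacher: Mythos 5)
Your proposal is correct, and it is essentially the argument the paper intends: the one-line ``proof'' in the paper (``taking in the preceding theorem a constant function $f$'') really does mean re-running the proof of Theorem~\ref{main prop} with $g\equiv n$ and noting that every inequality there is exact, which is precisely what you carry out. You are also right to flag that Theorem~\ref{main prop} cannot be invoked as a black box, since $\ad_X\preceq n$ only yields $\asdim X<\infty$ through the $\preceq$-relation, not $\asdim X\leqslant n$; this is a genuine (if minor) gap in the paper's phrasing that your write-up fills.
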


Now we turn to the case when $X$ is a graph, and obtain a characterization for finite asymptotic dimension which is easier to check.
\begin{cor}\label{char for asdim}
Given a graph $X=(V,E)$ with vertices $V$ and edges $E$, and equipped with the edge-path length metric $d$, then the following are equivalent:
\begin{enumerate}
  \item $\asdim X \leqslant n$;
  \item $\forall l \in \mathbb{N}$, $\forall k=1,2,\ldots, 3l$, $\forall x \in X$, we can assign a subset $S(x,k,l) \subseteq X$, satisfying:
        \begin{enumerate}[i)]
          \item $\forall l \in \mathbb{N}$, $\exists S_l >0$, such that $S(x,k,l) \subseteq B(x,S_l)$ for all $k=1,\ldots, 3l$;
          \item $\forall l \in \mathbb{N}$, $\forall k,k'$ with $1 \leqslant k \leqslant k' \leqslant 3l$, $\forall x \in X$,
                we have $S(x,k,l) \subseteq S(x,k',l)$;
          \item $\forall x,y \in X$ with $d(x,y)=1$ (i.e. with $x$ and $y$ connected by an edge), we have:
                $S(y,k,l) \subseteq S(x,k+1,l)$ for all $k=1,2,\ldots, 3l-1$;
          \item $\forall l \in \mathbb{N}$, $\sharp S(x,2l,l) \leqslant n+1$.
        \end{enumerate}
\end{enumerate}
\end{cor}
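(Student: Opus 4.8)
The plan is to deduce $(1)\Rightarrow(2)$ straight from Corollary~\ref{char1}, and to obtain $(2)\Rightarrow(1)$ by rerunning the $2\Rightarrow1$ part of the proof of Theorem~\ref{main prop}; the only genuinely new point is that condition iii) above, which is stated only for adjacent vertices, has to be iterated along a geodesic in order to recover the ``$d(x,y)\le l$'' form of the consistency condition used in Theorem~\ref{main prop}.

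For $(1)\Rightarrow(2)$: by Corollary~\ref{char1} there are sets $S(x,k,l)$ satisfying its conditions i)--iv) (and with nonempty values, since they are produced there from a cover). Conditions i) and ii) of the present statement are then literally the same, and condition iv), $\sharp S(x,2l,l)\le n+1$, is the instance $k=2l$ of condition iv) of Corollary~\ref{char1}. For condition iii) I would apply the first bullet of condition iii) of Corollary~\ref{char1} with the roles of $x$ and $y$ exchanged and $k$ replaced by $k+1$: since $d(x,y)=1$ this gives, for $k=1,\dots,3l-1$,
\[ S(y,k,l)=S\bigl(y,(k+1)-d(x,y),l\bigr)\subseteq S(y,k+1,l)\cap S(x,k+1,l)\subseteq S(x,k+1,l), \]
which is exactly the required inclusion.

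For $(2)\Rightarrow(1)$: fix $l\in\mathbb{N}$. The first step is to prove, by induction on $m$, the iterated form of iii): \emph{if $x,y\in V$ with $d(x,y)=m\le l$, then $S(y,j,l)\subseteq S(x,j+m,l)$ for all $j$ with $1\le j\le 3l-m$}. The case $m\le1$ is condition iii) (or is trivial); for the inductive step one chooses a geodesic $x=z_0,z_1,\dots,z_m=y$, applies iii) to the edge $z_{m-1}z_m$ to get $S(y,j,l)\subseteq S(z_{m-1},j+1,l)$ (legitimate since $j\le 3l-m\le 3l-1$), and then the inductive hypothesis to $x$ and $z_{m-1}$ at index $j+1$. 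Granting this, I would mimic the construction of Theorem~\ref{main prop}: put $H_l=\bigcup_{x\in V}S(x,l,l)$, set $A_h=\{y\in V:h\in S(y,l,l)\}$ for $h\in H_l$, and $\mathcal{U}_l=\{A_h:h\in H_l\}$. Then $\mathcal{U}_l$ covers $X$ (each $x$ lies in $A_h$ for any $h\in S(x,l,l)$), it is uniformly bounded by i) (indeed $\mathrm{mesh}(\mathcal{U}_l)\le 2S_l$), and its $l$-multiplicity is bounded as follows: given $x\in X$ and $h\in H_l$ with $B(x,l)\cap A_h\ne\emptyset$, pick $y$ in this intersection, so that $d(x,y)\le l$ and $h\in S(y,l,l)$; the iterated inclusion with $j=l$ together with monotonicity ii) yields $h\in S(y,l,l)\subseteq S(x,l+d(x,y),l)\subseteq S(x,2l,l)$, so the number of such $h$ is at most $\sharp S(x,2l,l)\le n+1$ by iv). Hence $m_l(\mathcal{U}_l)\le n+1$; since $l$ was arbitrary and $m_r$ is non-decreasing in $r$, for every $r>0$ the cover $\mathcal{U}_{\lceil r\rceil}$ is uniformly bounded with $m_r\le n+1$, and therefore $\asdim X\le n$ by the characterization of asymptotic dimension through the $r$-multiplicity of uniformly bounded covers.

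The main obstacle is exactly this iteration step: the hypotheses relate the sets $S(\cdot,\cdot,l)$ only across single edges and bound only $\sharp S(x,2l,l)$, so one must check that moving along a geodesic of length at most $l$ while incrementing the middle index never leaves the range $1,\dots,3l$ on which the data is defined --- this is where the slack between the level $2l$, at which the size is controlled, and the top $3l$ of the range gets used. The remaining verifications are bookkeeping identical to the proofs of Theorem~\ref{main prop} and Corollary~\ref{char1}. (As in those proofs one uses implicitly that the sets $S(x,l,l)$ are nonempty; without this the construction in $(2)\Rightarrow(1)$ need not yield a cover.)
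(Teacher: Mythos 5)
Your proposal is correct and follows essentially the same route as the paper: $(1)\Rightarrow(2)$ by specializing Corollary~\ref{char1}, and $(2)\Rightarrow(1)$ by chaining condition iii) along a geodesic to recover the consistency at distance $\le l$, then rerunning the cover construction from the $2\Rightarrow 1$ part of Theorem~\ref{main prop}. Your observations that the mesh bound is really $2S_l$ rather than $S_l$ and that nonemptiness of the $S(x,l,l)$ is implicitly needed for $\mathcal{U}_l$ to cover $X$ are both accurate but harmless refinements of what the paper writes.
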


\begin{rem}
The only distinction between the above two corollaries is that in Corollary~\ref{char for asdim}, assumption 2/iii) is required only for endpoints of an edge rather than for an arbitrary pair of points as in
Corollary~\ref{char1}. We point out that the preceding corollaries can be generalized to the case of arbitrary asymptotic dimension growth. We will not use such a generalization, so we omit it.
\end{rem}

\begin{proof}[Proof of Corollary 3.3]
\strut

$1 \Rightarrow 2$ is implied directly by Corollary \ref{char1}, so we focus on $2 \Rightarrow 1$.

Following the proof of $2 \Rightarrow 3$ in Proposition 3.1, $\forall l \in \mathbb{N}$, let $H=\bigcup\limits_{x\in X}S(x,l,l)$. And $\forall h \in H$, define $A_h=\{y: h \in S(y,l,l)\}$.
Define $\mathcal{U}_l=\{A_h: h\in H\}$. Since $\forall x\in X$, if we take $h\in S(x,l,l)$, then $x\in A_h$. So $\mathcal{U}_l$ is a cover of $X$.
Since $\exists S_l >0$ such that $S(x,l,l) \subseteq B(x,S_l)$, we know $d(h,y) \leqslant S_l$ for all $y\in A_h$, which implies $\mathrm{mesh}(\mathcal{U}_l) \leqslant S_l$.
Finally, let us analyse $m_l(\mathcal{U}_l)$. $\forall x \in X$, consider $h\in H$ with $B(x,l) \cap A_h \neq \emptyset$. Take $y\in B(x,l) \cap A_h$, i.e. $d(y,x) \leqslant l$ and $h \in S(y,l,l)$. By the definition of the edge-path length metric $d$, we know that there exists a sequence of vertices $y=y_0,y_1,\ldots,y_k=x$ such that $y_i \in V$ for all $i=0,1,\ldots,k$, $d(y_i,y_{i+1})=1$ for all $i=0,1,\ldots,k-1$, and $k\leqslant l$. Now by the hypothesis, we know:
$$S(y,l,l) \subseteq S(y_1,l+1,l) \subseteq S(y_2,l+2,l) \subseteq \ldots \subseteq S(y_k, l+k, l)=S(x,k+l,l)\subseteq S(x,2l,l).$$
So $\{h \in H: B(x,l) \cap A_h \neq \emptyset\} \subseteq S(x,2l,l)$, which implies $m_l(\mathcal{U}_l) \leqslant \sharp S(x,2l,l) \leqslant n+1$.
\end{proof}

\section{Normal cube path and normal distance}\label{sec:normal}
In the next two sections, we focus on CAT(0) cube complexes, and prove Theorem~\ref{thm a}. We prove it by constructing a uniformly bounded cover with suitable properties. Such a construction relies deeply on the analysis of normal balls and spheres, which we give in this section.

Normal cube paths, which were introduced by Niblo and Reeves in ~\cite{niblo1998geometry} play a key role in the construction of the cover. They determine a distance function on the vertices and the balls and spheres defined in terms of this distance are essential in our proof of Theorem~\ref{thm a}.

Throughout this section we fix a CAT(0) cube complex $X$ with a fixed vertex $x_0$. The 1-skeleton $X^{(1)}$ of $X$ is a graph with vertex set $V=X^{(0)}$, and edge set $E$, which give us the edge metric $d$ on $V$. This is the restriction of the $\ell^1$ metric to the $0$-skeleton.

\subsection{Normal cube paths}
Given a cube $C \in X$,  we denote by $\mathrm{St}(C)$ the union of all cubes which contain $C$ as a subface.
\begin{defn}[\cite{niblo1998geometry}]
Let  $\{C_i\}_{i=0}^n$ be a sequence of cubes such that each cube has dimension at least 1, and $C_{i-1} \cap C_i$ consists of a single point, denoted by $v_i$.
\begin{itemize}
  \item Call $\{C_i\}_{i=0}^n$ a \emph{cube path}, if $C_i$ is the (unique) cube of minimal dimension containing $v_i$ and $v_{i+1}$, i.e. $v_i$ and $v_{i+1}$ are diagonally opposite vertices of $C_i$. Define $v_0$ to be the vertex of $C_0$ diagonally opposite to $v_1$, and $v_{n+1}$ to be the vertex of $C_n$ diagonally opposite to $v_n$. The so-defined vertices $\{v_i\}_{i=0}^{n+1}$ are called the vertices of the cube path, and we say the cube path is from $v_0$ to $v_{n+1}$.
  \item The \emph{length} of a cube path is the number of the cubes in the sequence.
  \item A cube path is called \emph{normal} if $C_i \cap \mathrm{St}(C_{i-1})=v_i$.
\end{itemize}
\end{defn}

Normal cube paths in CAT(0) cube complexes behave like geodesics in trees. More precisely, in~\cite{niblo1998geometry},  the existence and uniqueness of normal cube paths connecting any pair of vertices is established. See also~\cite{reeves1995biautomatic}.

\begin{prop}[\cite{niblo1998geometry}]
For any two vertices $x,y \in V$, there exists a unique normal cube path from $x$ to $y$. (Note that the order is important here since in general normal cube paths are not reversible).
\end{prop}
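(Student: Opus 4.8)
The plan is to establish existence and uniqueness separately, both by induction on the edge-path distance $d(x,y)$, exploiting the combinatorial structure of the median graph $X^{(1)}$ and the hyperplane picture. For \emph{existence}, given $x \neq y$, I would first produce the initial cube $C_0$ of a normal cube path from $x$ to $y$: consider the set $\mathcal{H}(x)$ of hyperplanes adjacent to $x$ (i.e. dual to an edge at $x$) that separate $x$ from $y$. This collection is nonempty, and I claim a maximal pairwise-crossing subcollection of it spans a cube $C_0$ at $x$, with diagonally opposite vertex $v_1$; the key sub-point is that the edges of $C_0$ at $x$ are exactly those whose hyperplanes lie in the chosen subcollection, so that $d(v_1,y) = d(x,y) - \dim C_0 < d(x,y)$. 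By the inductive hypothesis there is a normal cube path from $v_1$ to $y$, say $\{C_i\}_{i=1}^n$; I then need to check that prepending $C_0$ keeps it normal, i.e. $C_1 \cap \mathrm{St}(C_0) = v_1$. This reduces to showing that no edge of $C_1$ at $v_1$ has its hyperplane crossing every hyperplane of $C_0$ — which follows from the maximality in the choice of $C_0$: such an edge's hyperplane would separate $x$ from $y$ (since it lies ``beyond'' $C_0$ on a geodesic) and would cross all of $C_0$, contradicting maximality.

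For \emph{uniqueness}, I would argue that the first cube $C_0$ of \emph{any} normal cube path from $x$ to $y$ is forced. If $\{C_i\}$ is normal, every edge of $C_0$ at $x$ lies on a geodesic from $x$ to $v_{n+1}=y$ (cube paths have total length equal to the sum of dimensions, and normality forces this to be geodesic — this is itself a small lemma, proved by checking hyperplanes are not recrossed), so the hyperplanes of $C_0$ all separate $x$ from $y$ and are pairwise crossing. Conversely, the normality condition $C_1 \cap \mathrm{St}(C_0) = v_1$ forces this crossing family to be \emph{maximal} among hyperplanes adjacent to $x$ that separate $x$ from $y$: if some hyperplane $h$ adjacent to $x$ separated $x$ from $y$ and crossed all hyperplanes of $C_0$, then the edge at $x$ dual to $h$, together with $C_0$, would span a larger cube, and tracking where $h$ gets crossed along the path would violate $C_1 \cap \mathrm{St}(C_0) = v_1$. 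Since a maximal pairwise-crossing family of hyperplanes determines a unique cube (by the Sageev fact quoted in the excerpt), $C_0$ is unique; hence $v_1$ is unique, and uniqueness of the remaining path follows by induction on $d(v_1,y) < d(x,y)$.

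The main obstacle I anticipate is the careful hyperplane bookkeeping in the inductive step — precisely, proving that a hyperplane which is adjacent to $x$, separates $x$ from $y$, and crosses every hyperplane of $C_0$ cannot exist once $C_0$ is chosen maximal, and dually that normality of a given path forces maximality. This requires knowing that along a normal cube path the hyperplanes crossed are exactly the hyperplanes separating $x$ from $y$, each crossed once (so the path is geodesic in $d$), and that $\mathrm{St}(C_0)$ is described hyperplane-theoretically as the union of cubes all of whose hyperplanes either belong to $C_0$ or cross all of $C_0$. Establishing this description of $\mathrm{St}(C_0)$, together with the interval lemmas \ref{basic median} and \ref{weakly modular} to control which edges lie on geodesics, is where the real work lies; once it is in place, both existence and uniqueness are clean inductions. (The irreversibility remark needs no proof — it is just the observation that the construction is asymmetric in $x$ and $y$, witnessed already by a single non-square $2$-cube, and I would simply note this without elaboration.)
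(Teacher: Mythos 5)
Your proposal deviates from the intended argument at the very first step, and the deviation opens a genuine gap in the uniqueness proof. The construction in Niblo--Reeves (and the sketch the paper gives immediately after the statement) hinges on a \emph{key fact} that you never state: if $\mathcal{H}(x)$ denotes the set of hyperplanes adjacent to $x$ and separating $x$ from $y$, then \emph{all} of $\mathcal{H}(x)$ is pairwise crossing. (This is an easy four-corner argument: for distinct $h,k\in\mathcal{H}(x)$, the vertex $x$ and the endpoints of the two dual edges at $x$, together with $y$, witness all four halfspace intersections being nonempty.) Consequently $\mathcal{H}(x)$ itself is the unique maximal pairwise-crossing family adjacent to $x$ in $[x,y]$, and by the Sageev fact it spans a single cube $C_0$ at $x$; there is no choice to make.

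By instead choosing ``a maximal pairwise-crossing subcollection'' of $\mathcal{H}(x)$, you implicitly allow that $\mathcal{H}(x)$ might fail to be pairwise crossing, in which case it could contain several distinct maximal pairwise-crossing subcollections, each spanning a different cube. Your uniqueness argument then breaks: you show that the first cube of any normal path has hyperplane set which is \emph{some} maximal pairwise-crossing subcollection of $\mathcal{H}(x)$, and you invoke ``a maximal pairwise-crossing family determines a unique cube'' to conclude $C_0$ is forced. But that invocation only pins down the cube once the family is pinned down; it does not rule out two different normal paths starting with two different maximal families. In fact, if $\mathcal{H}(x)$ were not pairwise crossing, your own existence construction would manufacture one normal path per maximal subcollection, contradicting the uniqueness you are trying to prove. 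The missing ingredient is precisely the key fact above. Once it is inserted, your choice disappears (the only maximal subcollection is all of $\mathcal{H}(x)$), your existence argument simplifies, and your uniqueness argument closes, since normality then forces the hyperplanes of $C_0$ to be exactly $\mathcal{H}(x)$. The rest of your plan --- the geodesic lemma via Proposition~\ref{normal cube path intersection property}, the hyperplane description of $\mathrm{St}(C_0)$, and the induction on $d(v_1,y)$ --- is in line with the standard argument.
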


\begin{prop}[\cite{niblo1998geometry}]\label{normal cube path intersection property}
The intersection of a normal cube path and a hyperplane is connected. In other words, a normal cube path crosses a hyperplane at most once.
\end{prop}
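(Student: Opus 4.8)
\emph{Proof plan.} Write $v_0,\dots,v_{n+1}$ for the vertices of the normal cube path $(C_i)_{i=0}^n$, so that $v_i$ and $v_{i+1}$ are diagonally opposite vertices of $C_i$. Since $C_i$ is the minimal cube containing $v_i$ and $v_{i+1}$, the hyperplanes crossing $C_i$ are exactly those separating $v_i$ from $v_{i+1}$. As no hyperplane contains a vertex, the intersection of the path with a hyperplane $h$ is the disjoint union of the mid\-cubes $h\cap C_i$ over those $i$ for which $h$ crosses $C_i$; hence the proposition is equivalent to: for each $h$, at most one index $i$ has $v_i$ and $v_{i+1}$ separated by $h$. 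I would prove this by induction on the length $n+1$, using two standard facts about a CAT(0) cube complex $X$: \textbf{(a)} at every vertex $v$, distinct edges are dual to distinct hyperplanes (else, writing $a_1,a_2$ for the far endpoints of two such edges, one has $\mu(a_1,v,a_2)=v$, so $a_1,v,a_2$ is a geodesic crossing the common dual hyperplane twice, which is impossible); and \textbf{(b)} a finite family of pairwise-crossing hyperplanes, each dual to an edge at a common vertex $v$, spans a cube of $X$ having $v$ as a vertex (induct on the size of the family; the two-element case holds because the two edges bound a square --- their far endpoints have a common neighbour $\neq v$ by a median computation, and in a CAT(0) cube complex every $4$-cycle bounds a square --- and the inductive step uses that the link of $v$ is flag).

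For the induction one uses that every contiguous subpath of a normal cube path is again a normal cube path (its defining conditions form a subset of those for the whole path, and the endpoints match up). Suppose the proposition fails and take a counterexample $(C_i)_{i=0}^n$ of minimal length, with a hyperplane $h$ crossing two of its cubes. Were the smallest index of a crossed cube $\geq 1$, the subpath $(C_i)_{i=1}^n$ would be a shorter counterexample; were the largest $\leq n-1$, likewise for $(C_i)_{i=0}^{n-1}$; and were some $C_c$ with $0<c<n$ crossed, likewise for $(C_i)_{i=0}^{c}$. So by minimality $h$ crosses exactly $C_0$ and $C_n$. Moreover $h$ cannot cross two consecutive cubes, whose intersection is a single point $v$: by (a) the unique edge at $v$ dual to $h$ would then lie in that intersection. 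Hence $n\geq 2$, and from the crossing pattern $v_0$ and $v_{n+1}$ lie in one halfspace $\mathfrak h^-$ of $h$ while $v_1,\dots,v_n$ lie in the complementary halfspace $\mathfrak h^+$.

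Now I exploit normality at the last cube, $C_n\cap\mathrm{St}(C_{n-1})=\{v_n\}$. Let $k$ be any hyperplane crossing $C_{n-1}$; then $k$ separates $v_{n-1}$ from $v_n$, and by (a) $k$ does not also cross $C_n$. Applying the inductive hypothesis to the shorter normal subpaths $(C_i)_{i=0}^{n-1}$ and $(C_i)_{i=1}^n$ shows that $k$ crosses each of them only at $C_{n-1}$; consequently $v_0,\dots,v_{n-1}$ all lie on the side of $k$ containing $v_{n-1}$, while $v_n$ and $v_{n+1}$ lie on the other side. Combined with $v_{n-1},v_n\in\mathfrak h^+$ and $v_0,v_{n+1}\in\mathfrak h^-$, all four intersections of the halfspaces of $h$ and of $k$ are nonempty, so $h\pitchfork k$. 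As $k$ was arbitrary, $h$ crosses every hyperplane crossing $C_{n-1}$; together with $h$ these form a pairwise-crossing family, all dual to edges at $v_n$, so by (b) they span a cube $D$ with $v_n$ as a vertex. By (a) the face of $D$ at $v_n$ dual to the hyperplanes of $C_{n-1}$ is $C_{n-1}$ itself, so $C_{n-1}\subseteq D$ and hence $D\subseteq\mathrm{St}(C_{n-1})$. But $D$ also contains the edge of $C_n$ at $v_n$ dual to $h$, producing an edge of $C_n\cap\mathrm{St}(C_{n-1})$ other than the point $v_n$ --- contradicting normality. This contradiction completes the induction.

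The heart of the argument, and the step I expect to be most delicate to write out carefully, is the halfspace bookkeeping in the last paragraph that yields $h\pitchfork k$: this is exactly where the inductive hypothesis is invoked (through the two subpaths together with fact (a)), and it requires tracking precisely on which side of $k$ each of $v_0,\dots,v_{n+1}$ lies. Fact (b) is routine but leans on the standard property of CAT(0) cube complexes that $4$-cycles bound squares, so I would isolate it (and fact (a)) as preliminary lemmas rather than prove them inline.
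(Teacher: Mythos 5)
The paper itself gives no proof here: it cites Niblo--Reeves and explicitly says the proofs of Propositions 4.2--4.4 are omitted. So there is nothing in the text to compare against directly, but your argument can be checked on its own and it is correct. Your proof is also very close in spirit to the argument underlying the paper's Lemma~\ref{normal cube path main lemma}, which is likewise ``abstracted from'' Niblo--Reeves: there too the contradiction is obtained by showing that a hyperplane crossing all hyperplanes of $C_{n-1}$ spans, together with them, a cube $D$ containing $C_{n-1}$ as a face, so that $D\subseteq\mathrm{St}(C_{n-1})$ meets $C_n$ in an edge, contradicting normality. Your facts (a) and (b) package exactly the ``Lemma 2.15'' ingredient that the paper imports from Niblo--Reeves, and the minimal-counterexample reduction plus the halfspace bookkeeping for $v_0,\dots,v_{n+1}$ is a clean, self-contained way to reach the point where that lemma bites. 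Two small presentational remarks. First, once you know $h$ crosses exactly $C_0$ and $C_n$ and have ruled out consecutive crossings (so $n\geq 2$), the fact that $k$ crosses only $C_{n-1}$ already follows from the inductive hypothesis applied to $(C_i)_{i=0}^{n-1}$ together with fact~(a) at $C_n$; invoking the second subpath $(C_i)_{i=1}^n$ is redundant. Second, in fact~(b) you should make explicit the step that a cube of a CAT(0) cube complex is determined by its set of edges at a vertex (equivalently, by the corresponding simplex in the link); this is what justifies identifying the face of $D$ spanned by the edges dual to the hyperplanes of $C_{n-1}$ with $C_{n-1}$ itself. With those points spelled out the argument is complete.
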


\begin{prop}[\cite{niblo1998geometry},~\cite{campbell2005hilbert}]\label{fellow-traveller}
Let $\{C_i\}_{i=0}^n$ and $\{D_j\}_{j=0}^m$ be two normal cube paths in $X$, and let $\{v_i\}_{i=0}^{n+1}$ and $\{w_j\}_{j=0}^{m+1}$ be the vertices of these normal cube paths. If $d(v_0,w_0) \leqslant 1$ and $d(v_{n+1},w_{m+1}) \leqslant 1$, then for all $k$,  we have $d(v_k,w_k) \leqslant 1$.
\end{prop}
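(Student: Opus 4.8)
The plan is to induct on the total number of cubes, $n+m$, reducing everything to a single \emph{base step}: if $d(v_0,w_0)\le 1$ and $d(v_{n+1},w_{m+1})\le 1$, then $d(v_1,w_1)\le 1$, where by convention, if a normal cube path is trivial (its endpoints coincide), its ``$v_1$'' denotes that single vertex. Granting the base step, observe that deleting the first cube of a normal cube path again gives a normal cube path: $\{C_i\}_{i=1}^{n}$ is a cube path from $v_1$ to $v_{n+1}$ and the normality condition $C_i\cap\mathrm{St}(C_{i-1})=v_i$ for $i\ge 2$ is inherited, so by uniqueness of normal cube paths it \emph{is} the normal cube path from $v_1$ to $v_{n+1}$; likewise $\{D_j\}_{j=1}^{m}$ is the one from $w_1$ to $w_{m+1}$. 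Since $d(v_1,w_1)\le 1$ (base step) and $d(v_{n+1},w_{m+1})\le 1$ (hypothesis), and the total number of cubes has dropped by two, the inductive hypothesis gives $d(v_k,w_k)\le 1$ for all $k\ge 1$; together with $d(v_0,w_0)\le 1$ this closes the induction. The degenerate case in which, say, the first path is trivial ($v_0=v_{n+1}=:v$) is handled directly: then $d(v,w_0)\le 1$ and $d(v,w_{m+1})\le 1$ force $d(w_0,w_{m+1})\le 2$, and inspecting the at most two hyperplanes separating $w_0$ from $w_{m+1}$ (either they cross, so the second path is a single cube, or they do not, so it is a chain of two edges) shows that every $w_k$ lies within distance $1$ of $v$.

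For the base step one uses the description of the first cube due to Niblo and Reeves~\cite{niblo1998geometry}: the first cube $C_0$ of the normal cube path from $p$ to $q$ is the cube at $p$ spanned by exactly those edges at $p$ whose far endpoint lies in $[p,q]$, and these edges do span a cube because their dual hyperplanes pairwise cross --- if $h,h'$ are both dual to edges at $p$ and both separate $p$ from $q$, then they are neither nested (an endpoint of a dual edge at $p$ would land on the wrong side of the other hyperplane) nor disjoint (then $q$ could not lie on the far side of both), hence $h\pitchfork h'$. Consequently $v_1$ is the unique vertex obtained from $v_0$ by switching sides across exactly the hyperplanes in $\mathcal L_x:=\{h:\ h\ \text{is dual to an edge at}\ v_0\ \text{and separates}\ v_0\ \text{from}\ v_{n+1}\}$, and $w_1$ from $w_0$ using the analogous set $\mathcal L_{x'}$. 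Since the position of $v_1$ relative to any hyperplane equals that of $v_0$ with the hyperplanes of $\mathcal L_x$ flipped, and similarly for $w_1$, the set of hyperplanes separating $v_1$ from $w_1$ is precisely the symmetric difference $\mathcal L_x\,\triangle\,\mathrm{Sep}(v_0,w_0)\,\triangle\,\mathcal L_{x'}$, where $\mathrm{Sep}(\cdot,\cdot)$ denotes the separating hyperplanes and $\mathrm{Sep}(v_0,w_0)$ has at most one element $a$. So it remains to show that this symmetric difference has at most one element.

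This hyperplane count is the main obstacle. Write $a$ for the hyperplane dual to the edge $v_0w_0$ (when $v_0\ne w_0$) and $b$ for the one dual to $v_{n+1}w_{m+1}$ (when $v_{n+1}\ne w_{m+1}$). Any hyperplane other than $a$ or $b$ has $v_0,w_0$ strictly on one side and $v_{n+1},w_{m+1}$ strictly on the other, so $\mathrm{Sep}(v_0,v_{n+1})\,\triangle\,\mathrm{Sep}(w_0,w_{m+1})\subseteq\{a,b\}$; a short check shows $a\in\mathcal L_x\,\triangle\,\mathcal L_{x'}$ whenever $a$ exists, unless $a=b$, in which case $\mathcal L_x=\mathcal L_{x'}$ and we are done. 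The crux is, for a hyperplane $h\notin\{a,b\}$ separating $v_0$ (hence $w_0$) from $v_{n+1}$ (hence $w_{m+1}$), to compare ``$h$ is dual to an edge at $v_0$'' with ``$h$ is dual to an edge at $w_0$''. The geometric input is: if $h$ is dual to an edge $e$ at $v_0$ then either $a\pitchfork h$, in which case $e$ and the edge $v_0w_0$ span a square whose opposite edge is dual to $h$ at $w_0$, so $h$ is dual to an edge at $w_0$ as well; or $a\not\pitchfork h$, in which case exactly one of the four quadrants cut out by $\{a,h\}$ is empty of vertices, and placing $v_0$, $w_0$, the endpoints $v_{n+1},w_{m+1}$, and the neighbour of $v_0$ across $h$ among these quadrants forces all four to be nonempty --- a contradiction, so in fact $a\pitchfork h$ always. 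Running this argument, together with its mirror image from $w_0$ (where the crucial point is that two hyperplanes dual to edges at a common vertex and both separating it from a fixed target must cross, forcing the relevant squares to exist), and tracking the single hyperplane of discrepancy that $b$ can introduce when the endpoint is changed from $v_{n+1}$ to $w_{m+1}$, shows that $\mathcal L_x$ and $\mathcal L_{x'}$ agree up to $a$ and at most one further hyperplane, whence $|\mathcal L_x\,\triangle\,\mathrm{Sep}(v_0,w_0)\,\triangle\,\mathcal L_{x'}|\le 1$. This quadrant bookkeeping --- keeping track of which of the distinguished vertices $v_0,w_0,v_{n+1},w_{m+1}$ and which neighbouring vertices occupy which quadrants of the relevant pairs of hyperplanes --- is the only real technical point; the rest is formal. (This is essentially the argument of~\cite{niblo1998geometry} and~\cite{campbell2005hilbert}.)
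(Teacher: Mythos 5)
The paper does not prove this proposition; it is quoted from Niblo--Reeves and Campbell, and the paper explicitly says of Propositions 2.8--2.10 that ``We omit the proofs for the above three propositions, the readers can find them in the original paper.'' So there is no in-house argument to compare yours against, and I can only assess the proof on its own merits.

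Your reduction to the base step $d(v_1,w_1)\le 1$ and the identity $\mathrm{Sep}(v_1,w_1)=\mathcal L_x\,\triangle\,\mathrm{Sep}(v_0,w_0)\,\triangle\,\mathcal L_{x'}$ are both fine, but the hyperplane count is where the argument breaks. The ``short check'' that $a\pitchfork h$ for every relevant $h\notin\{a,b\}$ is simply not true, and the quadrant bookkeeping does not force all four quadrants of $\{a,h\}$ to be occupied. Here is a concrete instance. Let $X$ be a single square with vertices $w_0,u_1,z,u_2$ (so $w_0$ and $z$ are diagonally opposite) together with one further edge $v_0w_0$ attached at the corner $w_0$; this is a two-dimensional CAT(0) cube complex. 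Take $v_{n+1}=w_{m+1}=z$, so $a$ is the hyperplane dual to $v_0w_0$ and there is no $b$. The two hyperplanes $h_1,h_2$ of the square are dual to edges at $w_0$ and separate $w_0$ from $z$ (hence also $v_0$ from $z$), yet neither is dual to an edge at $v_0$, since the only edge at $v_0$ is $v_0w_0$; and neither crosses $a$: writing $h_1^-$ for the halfspace containing $v_0$, one has $a^-=\{v_0\}\subset h_1^-$, so the quadrant $a^-\cap h_1^+$ is empty --- exactly the quadrant your argument attempts to populate. Concretely $\mathcal L_x=\{a\}$, $\mathcal L_{x'}=\{h_1,h_2\}$, and $\mathcal L_x\,\triangle\,\{a\}\,\triangle\,\mathcal L_{x'}=\{h_1,h_2\}$ has two elements. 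Correspondingly the normal cube path from $v_0$ to $z$ is $v_0,w_0,z$ (an edge, then the square) while that from $w_0$ to $z$ is $w_0,z$ (just the square), so $v_1=w_0$, $w_1=z$ and $d(v_1,w_1)=2$. So the base step, as you have stated it, is false, and the gap is not cosmetic.

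Two further remarks. First, the paper only ever applies the proposition in the special case $v_0=w_0$ (in the proof of Lemma 5.2 both normal cube paths begin at the fixed vertex $\bar x$), and in that case a version of your argument does go through: the single hyperplane $b$ separating $v_{n+1}$ from $w_{m+1}$ is necessarily maximal in whichever of $H(v_0,v_{n+1})$, $H(v_0,w_{m+1})$ contains it, and one then shows, using the height description of $H(v_0,v_k)$ coming from Lemmas 4.7 and 4.12, that $H(v_0,v_k)\,\triangle\,H(v_0,w_k)\subseteq\{b\}$ for every $k$. Second, for the general two-sided statement you should check the precise formulation and indexing conventions in the cited references before attempting to reconstruct the proof; the example above shows that at least one natural reading of the statement as printed is not literally correct.
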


We omit the proofs for the above three propositions, the readers can find them in the original paper. However, let us recall the construction of the normal cube path from $x$ to $y$ as follows: consider all the hyperplanes separating $x$ from $y$ and adjacent to $x$. The key fact is that these hyperplanes all cross a unique cube adjacent to $x$ lying in the interval from $x$ to $y$. This cube is defined to be the first cube on the normal cube path; then one proceeds inductively to construct the required normal cube path.

We will also need the following lemma, abstracted from \cite{niblo1998geometry}.

\begin{lem}\label{normal cube path main lemma}
Let $\{C_i\}_{i=0}^n$ be the normal cube path, and $h$ be a hyperplane. If $h \pitchfork C_i$, then $\exists$ a hyperplane $k$, such that $k\pitchfork C_{i-1}$ and $h$ does not intersect with $k$.
\end{lem}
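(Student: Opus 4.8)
The plan is to exploit the normality condition $C_i\cap\mathrm{St}(C_{i-1})=v_i$ directly. Suppose $h\pitchfork C_i$. Then $h$ separates the two diagonally opposite vertices $v_i$ and $v_{i+1}$ of $C_i$, so in particular $v_i\in\mathrm{St}(C_{i-1})$ lies on the $h$-side of $v_{i+1}$. I want to produce a hyperplane $k$ crossing $C_{i-1}$ with $h$ disjoint from $k$, i.e. $h$ and $k$ do not cross. The natural candidate: recall that the first cube $C_{i-1}$ (viewed from $v_{i-1}$, or rather the cube after it in the path) is spanned precisely by the hyperplanes that separate $v_i$ from $v_{i+1}$... no — more carefully, $C_i$ is the cube at $v_i$ spanned by those hyperplanes adjacent to $v_i$ that separate $v_i$ from $v_{n+1}$ and lie in $[v_i,v_{n+1}]$, and normality says none of these is adjacent to $C_{i-1}$ except through $v_i$ itself.

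First I would set up the dichotomy: for the hyperplane $h\pitchfork C_i$, consider whether $h$ crosses some hyperplane $k$ with $k\pitchfork C_{i-1}$. If $h$ crosses \emph{every} hyperplane crossing $C_{i-1}$, I aim to derive a contradiction with normality. The key geometric fact I would invoke: if $h$ crosses every hyperplane of the cube $C_{i-1}$, then (by the standard fact that a maximal pairwise-crossing family of hyperplanes spans a unique cube, and by the flag/link condition) the hyperplane $h$ together with the hyperplanes of $C_{i-1}$ all pairwise cross near $v_i$, forcing a cube strictly containing $C_{i-1}$ and meeting $C_i$ in more than just $v_i$ — equivalently, an edge dual to $h$ emanating from $v_i$ would lie in $\mathrm{St}(C_{i-1})$, contradicting $C_i\cap\mathrm{St}(C_{i-1})=v_i$ since that edge is a face of $C_i$. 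Thus some hyperplane $k\pitchfork C_{i-1}$ must fail to cross $h$, which is exactly the conclusion.

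The step I expect to be the main obstacle is making the ``forces a cube strictly containing $C_{i-1}$'' argument precise: I need to go from ``$h$ crosses all hyperplanes of $C_{i-1}$'' to ``$h$ is dual to an edge at $v_i$ inside $\mathrm{St}(C_{i-1})$.'' This requires knowing that $h$ is actually adjacent to the vertex $v_i$ (not merely crossing $C_i$ somewhere), which follows since $v_i,v_{i+1}$ are diagonally opposite in $C_i$ so every hyperplane crossing $C_i$ is adjacent to $v_i$; and then that a hyperplane adjacent to $v_i$ crossing every hyperplane through $C_{i-1}$ spans, together with those, a cube at $v_i$ — here I would use Sageev's result (quoted in the excerpt) that a maximal collection of pairwise-crossing hyperplanes crosses a unique common cube, applied to the collection $\{\text{hyperplanes of }C_{i-1}\}\cup\{h\}$ after checking it is pairwise crossing. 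That cube contains $C_{i-1}$ as a proper face and contains the edge of $C_i$ dual to $h$, so that edge (minus $v_i$) lies in $\mathrm{St}(C_{i-1})\cap C_i$, contradiction. I would double-check the edge case $i=1$ separately, though it should go through identically since $C_0$ is an honest cube of dimension $\geq 1$.
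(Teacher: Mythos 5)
Your proof is correct and takes essentially the same route as the paper: both argue by contradiction, assuming $h$ crosses every hyperplane dual to $C_{i-1}$, and then span a cube at $v_i$ containing $C_{i-1}$ as a face together with the edge of $C_i$ dual to $h$, contradicting $C_i \cap \mathrm{St}(C_{i-1}) = v_i$. Where the paper cites a lemma of Niblo--Reeves for the cube-spanning step, you appeal to Sageev's maximal-family result together with the flag condition at $v_i$; this is the same geometric fact, and your account correctly identifies the one point needing care (that $h$ is dual to an edge at $v_i$).
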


\begin{proof}
Otherwise, $\forall k \pitchfork C_{i-1}$, we have $h \pitchfork k$. Now by Lemma 2.15 in~\cite{niblo1998geometry}, we know that there exists a cube $C \in X$, such that all such $k \pitchfork C$ and $h \pitchfork C$, and $C_{i-1}$ is a face of $C$. Moreover, $C$ contains an edge $e$ of $C_i$ since $h \pitchfork C$. So $\mathrm{St}(C_{i-1}) \cap C_i$ contains $e$, which is a contradiction to the definition of normal cube path.
\end{proof}

Now for any two vertices of $X$, we consider all the hyperplanes separating them, with a partial order by inclusion. More explicitly, for any $x,y\in V$, let $H(x,y)$ be the set of hyperplanes separating $x$ and $y$. For any $h\in H(x,y)$, let $h^-$ be the halfspace containing $x$. Define $h \leqslant k$ if $h^- \subseteq k^-$. Note that the definition depends on the vertices we choose, and we may change them under some circumstances, but still write $h^-$ for abbreviation. To avoid ambiguity, we point out the vertices if necessary. We write $h<k$ to mean a strict containment $h^- \subsetneq k^-$.
\begin{lem}\label{normal contain}
For any $h,k \in H(x,y)$, $h$ and $k$ do not intersect if and only if $h \leqslant k$ or $k \leqslant h$.
\end{lem}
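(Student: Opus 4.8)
The plan is to prove the contrapositive-style equivalence directly in both directions, working entirely in the halfspace picture established just before the statement. Recall that for $h \in H(x,y)$ the halfspace $h^-$ is the one containing $x$, so $h^+$ contains $y$; and two hyperplanes $h,k$ fail to intersect precisely when one of the four halfspace-intersections $h^{\pm} \cap k^{\pm}$ is empty.

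First I would prove the easy direction: if $h \leqslant k$, i.e. $h^- \subseteq k^-$, then $h$ and $k$ do not intersect. Indeed, $h^- \subseteq k^-$ forces $k^+ \subseteq h^+$, hence $h^- \cap k^+ \subseteq k^- \cap k^+ = \emptyset$, so one of the four quadrants is empty and $h,k$ are non-crossing. The case $k \leqslant h$ is symmetric. This uses nothing beyond the definitions.

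For the converse, suppose $h$ and $k$ do not intersect; I want to deduce $h \leqslant k$ or $k \leqslant h$. Since both hyperplanes separate $x$ from $y$, the point $x$ lies in $h^- \cap k^-$ and $y$ lies in $h^+ \cap k^+$, so these two quadrants are nonempty. As $h,k$ are non-crossing, exactly one of the remaining quadrants $h^- \cap k^+$ and $h^+ \cap k^-$ is empty (it cannot be both, since that would force $h$ and $k$ to coincide, whereas $H(x,y)$ consists of distinct hyperplanes separating the two chosen points — and even if one allowed equality the conclusion $h\leqslant k$ holds trivially). If $h^- \cap k^+ = \emptyset$, then $h^- \subseteq k^-$ (every point of $h^-$ lies in $k^-$ or $k^+$, and not the latter), giving $h \leqslant k$; if instead $h^+ \cap k^- = \emptyset$, the same argument gives $k^- \subseteq h^-$, i.e. $k \leqslant h$.

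The argument is essentially a bookkeeping exercise on the four quadrants, so there is no serious obstacle; the only point requiring a little care is observing that the two ``diagonal'' quadrants $h^- \cap k^-$ and $h^+ \cap k^+$ are automatically nonempty because $x$ and $y$ witness them, which is exactly what pins down \emph{which} of the two containments holds rather than leaving the conclusion as a bare disjunction. This is where the hypothesis $h,k \in H(x,y)$ — as opposed to arbitrary non-crossing hyperplanes — is used.
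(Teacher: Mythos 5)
Your proof is correct, but it takes a genuinely different route from the paper's. You argue purely set-theoretically on the four halfspace quadrants: $x$ witnesses $h^-\cap k^-\neq\emptyset$, $y$ witnesses $h^+\cap k^+\neq\emptyset$, and since non-crossing means one quadrant is empty, that empty quadrant must be one of the two off-diagonal ones, which immediately gives $h^-\subseteq k^-$ or $k^-\subseteq h^-$. The paper instead invokes the normal cube path from $x$ to $y$: since $h,k$ do not cross, they intersect distinct cubes $C_i$, $C_j$ (say $i<j$), and then by the fact that a normal cube path crosses each hyperplane exactly once, the intermediate vertex $v_{i+1}$ lies in $h^+\cap k^-$, pinning that quadrant as nonempty and forcing $h^-\cap k^+=\emptyset$, hence $h^-\subseteq k^-$. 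The paper's argument buys a little more than the stated lemma: it ties the direction of the containment $h^-\subseteq k^-$ to the cube-path order $i<j$, which is exactly the orientation used in Proposition~\ref{normal chain} and Lemma~\ref{char nor dist}. Your version is shorter and cleaner as a self-contained proof of the disjunction (and correctly handles the degenerate case where both off-diagonal quadrants are empty, which would force $h=k$), but it does not by itself record which containment holds; if one later needs the orientation, one has to re-derive it. Both proofs are valid.
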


\begin{proof}
We only need to show the necessity.
Let $\{C_i\}_{i=0}^n$ be the normal cube path from $x$ to $y$, and assume $h \pitchfork C_i$, $k \pitchfork C_j$. Since $h$ and $k$ do not intersect, $i \neq j$. Assume $i<j$.
Obviously, $x \in h^{-} \cap k^{-}$, and $y \in h^+ \cap k^+$. Since $h \pitchfork C_i$ and $k \pitchfork C_j$, by Proposition \ref{normal cube path intersection property}, $v_{i+1} \in h^+ \cap k^-$. Since $h$ does not intersect with $k$, we have $h^- \cap k^+ = \emptyset$, which implies $h^{-} \subseteq k^{-}$.
\end{proof}

Combining the above two lemmas, we have the following result on the existence of chains in $H(x,y)$.

\begin{prop}\label{normal chain}
Let $\{C_i\}_{i=0}^n$ be the normal cube path from $x$ to $y$, and $h$ be a hyperplane such that $h \pitchfork C_l$. Then there exists a chain of hyperplanes $h_0 < h_1 < \cdots < h_{l-1}, h_l=h$ such that $h_i \pitchfork C_i$.
\end{prop}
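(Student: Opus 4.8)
The plan is to prove this by downward induction on $l$, constructing the chain one step at a time using Lemma \ref{normal cube path main lemma} together with Lemma \ref{normal contain}. The base of the induction is trivial: if $l=0$ then the chain is just $h_0 = h$ and $h \pitchfork C_0$ by hypothesis. For the inductive step, suppose $h = h_l \pitchfork C_l$ with $l \geqslant 1$. By Lemma \ref{normal cube path main lemma}, there exists a hyperplane $k$ with $k \pitchfork C_{l-1}$ such that $h_l$ and $k$ do not intersect. I want to set $h_{l-1} = k$ and continue, but before invoking the inductive hypothesis on $h_{l-1}$ and $C_{l-1}$ I must check two things: first, that $h_{l-1} \in H(x,y)$, i.e.\ that it genuinely separates $x$ from $y$ (this is immediate, since any hyperplane crossing a cube $C_{l-1}$ on the normal cube path from $x$ to $y$ separates $x$ from $y$, by Proposition \ref{normal cube path intersection property} and the fact that the cube lies in the interval $[x,y]$); and second — and this is the real content — that $h_{l-1} < h_l$ in the order on $H(x,y)$ rather than $h_l < h_{l-1}$.

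The main obstacle is thus pinning down the \emph{direction} of the inequality $h_{l-1} < h_l$. Since $h_{l-1}$ and $h_l$ do not intersect, Lemma \ref{normal contain} tells us that one contains the other; I need to rule out $h_l \leqslant h_{l-1}$. The argument mirrors the proof of Lemma \ref{normal contain}: $h_{l-1} \pitchfork C_{l-1}$ and $h_l \pitchfork C_l$ with $l-1 < l$, so $v_l = C_{l-1} \cap C_l$ lies in $h_{l-1}^+ \cap h_l^-$ (the side of $h_{l-1}$ away from $x$ and the side of $h_l$ towards $x$), using Proposition \ref{normal cube path intersection property} to locate $v_l$ relative to each hyperplane. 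Since $h_{l-1}$ and $h_l$ are disjoint, the empty intersection of halfspaces must be $h_{l-1}^+ \cap h_l^-$ — wait, that one is nonempty, so instead $h_{l-1}^- \cap h_l^+ = \emptyset$, giving $h_l^+ \subseteq h_{l-1}^+$, equivalently $h_{l-1}^- \subseteq h_l^-$, which is exactly $h_{l-1} < h_l$. (The containment is strict because the hyperplanes are distinct.) Once the direction is settled, apply the inductive hypothesis to $h_{l-1} \pitchfork C_{l-1}$ to obtain a chain $h_0 < h_1 < \cdots < h_{l-2} < h_{l-1}$ with $h_i \pitchfork C_i$, and append $h_l = h$ to get $h_0 < h_1 < \cdots < h_{l-1} < h_l = h$, completing the induction. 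The only subtlety worth double-checking in the write-up is that all the halfspace labels $h_i^{\pm}$ are taken consistently with respect to the fixed pair $x,y$ (so that "$h^-$ contains $x$" throughout), which is precisely the convention fixed just before Lemma \ref{normal contain}.
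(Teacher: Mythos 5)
Your proof is correct and follows essentially the same approach as the paper: build the chain step by step via Lemma \ref{normal cube path main lemma}, then use Lemma \ref{normal contain} to turn non-crossing into comparability. The one thing you add is the explicit check that the inequality runs in the direction $h_{l-1} < h_l$ rather than the reverse; the paper's proof invokes Lemma \ref{normal contain} without commenting on this, and indeed the direction is established in the \emph{proof} (but not the statement) of that lemma, so spelling it out as you do is a worthwhile clarification.
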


\begin{proof}
By Lemma~\ref{normal cube path main lemma}, there exists a hyperplane $k$, such that $k\pitchfork C_{l-1}$ and $h$ does not intersect with $k$. Define $h_{l-1}=k$. Inductively, we can define a sequence of hyperplanes as required. Then the conclusion follows by Lemma~\ref{normal contain}.
\end{proof}

Finally, we give a lemma used in the proof of the consistency part of our main theorem.
\begin{lem}\label{consistency lemma}
Let $x_0,x,y\in V$ with $[x_0,y] \subseteq [x_0,x]$, and let $x',y'$ be the $n-$th vertex on the normal cube path from $x_0$ to $x$, and to $y$. If $x' \neq y'$, then $x' \notin [x_0,y]$.
\end{lem}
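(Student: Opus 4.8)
The plan is to translate the statement into the combinatorics of the poset of hyperplanes separating $x_0$ from $x$, where it becomes a short set computation. Write $H(x_0,x)$ and $H(x_0,y)$ for the sets of hyperplanes separating $x_0$ from $x$, respectively from $y$; order all hyperplanes by declaring $h\leqslant k$ when the halfspace of $h$ containing $x_0$ is contained in the halfspace of $k$ containing $x_0$, and use the induced orders on $H(x_0,x)$ and $H(x_0,y)$. Let $x_0=v_0,v_1,v_2,\ldots$ be the vertices of the normal cube path $\{C_i\}$ from $x_0$ to $x$, so that $x'=v_n$, and let $x_0=w_0,w_1,w_2,\ldots$ be the vertices of the normal cube path $\{D_i\}$ from $x_0$ to $y$, so that $y'=w_n$. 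Put $T^x_i=H(x_0,v_i)$, the hyperplanes crossed by $C_0,\ldots,C_{i-1}$, and likewise $T^y_i=H(x_0,w_i)$. I would argue the contrapositive: assuming $x'=v_n\in[x_0,y]$, show $v_n=w_n$.

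First I would record the hyperplane forms of the hypotheses. Since $y\in[x_0,y]\subseteq[x_0,x]$, we have $y\in[x_0,x]$, hence $H(x_0,y)\subseteq H(x_0,x)$; moreover $H(x_0,y)$ is a down-set in $H(x_0,x)$, because if $k\in H(x_0,y)$ and $h\leqslant k$ then $y\in k^+\subseteq h^+$, so $h\in H(x_0,y)$. And $x'=v_n\in[x_0,y]$ is precisely the assertion $T^x_n=H(x_0,v_n)\subseteq H(x_0,y)$.

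The heart of the proof is the identity $T^y_i=T^x_i\cap H(x_0,y)$ for all $i$, which I would establish by induction on $i$ from the following description, implicit in the Niblo--Reeves construction recalled above: the hyperplanes crossing the $i$-th cube $C_i$ are exactly the $\leqslant$-minimal elements of $H(x_0,x)\setminus T^x_i$, and similarly for $D_i$ inside $H(x_0,y)$. (For $C_0$ this says $C_0$ is spanned by the minimal elements of $H(x_0,x)$: a minimal hyperplane crossed by $C_l$ with $l\geqslant1$ would, by Proposition~\ref{normal chain}, lie above a hyperplane of $H(x_0,x)$, contradicting minimality; conversely a normal cube path crosses each hyperplane once and monotonically by Proposition~\ref{normal cube path intersection property}, so by Lemma~\ref{normal contain} it crosses $k$ strictly before $h$ whenever $k<h$, whence $h\pitchfork C_0$ forces $h$ minimal. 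For general $i$ one applies this to the normal cube path from $v_i$ to $x$, on whose separating hyperplanes $H(v_i,x)=H(x_0,x)\setminus T^x_i$ the order restricts.) Granting this description, the inductive step reduces to the order-theoretic fact that the minimal elements of $H(x_0,y)\setminus T^x_i$ are exactly the minimal elements of $H(x_0,x)\setminus T^x_i$ that lie in $H(x_0,y)$; one inclusion is formal, and the other uses the down-set property, since $k<h$ with $k\in H(x_0,x)\setminus T^x_i$ and $h\in H(x_0,y)$ forces $k\in H(x_0,y)$, so $k$ already obstructs minimality of $h$ in $H(x_0,y)\setminus T^x_i$.

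With the identity in hand the conclusion is immediate: the hypothesis $T^x_n\subseteq H(x_0,y)$ gives $T^y_n=T^x_n\cap H(x_0,y)=T^x_n$, that is $H(x_0,w_n)=H(x_0,v_n)$; since a vertex of a CAT(0) cube complex is determined by the set of hyperplanes separating it from $x_0$, this forces $w_n=v_n$, contradicting $x'\neq y'$. I expect the main obstacle to be the structural description of the third paragraph --- matching the cubes of a normal cube path with the rank layers of its hyperplane poset and verifying that both the partial order and the selection of minimal elements are unaffected when the endpoint is moved from $x$ to the sub-endpoint $y$; everything after that is bookkeeping. (One should also note the harmless point that the $n$-th vertex of both paths is defined, since $H(x_0,y)\subseteq H(x_0,x)$ makes the path to $y$ no longer than the path to $x$.)
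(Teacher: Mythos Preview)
Your argument is correct. Both you and the paper argue the contrapositive, assuming $x'\in[x_0,y]$ and deducing $x'=y'$; the difference is entirely one of detail. The paper's proof is two lines: it simply asserts that ``by the construction of the normal cube path, $x'$ is also the $n$-th vertex on the normal cube path from $x_0$ to $y$, since $y\in[x_0,x]$''. Your hyperplane-poset computation, centred on the identity $T^y_i=T^x_i\cap H(x_0,y)$ and the characterisation of the $i$-th cube as the set of minimal elements of $H(x_0,x)\setminus T^x_i$, is exactly what is needed to substantiate that assertion rigorously. So the route is the same; what your version buys is an honest verification of the step the paper leaves implicit, at the cost of some length. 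The propositions you invoke (Proposition~\ref{normal chain}, Lemma~\ref{normal contain}, Proposition~\ref{normal cube path intersection property}) are all available at this point in the paper, and your use of the down-set property of $H(x_0,y)$ inside $H(x_0,x)$ is the crucial observation that makes the induction go through.
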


\begin{proof}
Otherwise, $x' \in [x_0,y]$. By the construction of the normal cube path, we know $x'$ is also the $n-$th vertex on the normal cube path from $x_0$ to $y$, since $y\in [x_0,x]$. In other words, $x'=y'$, which is a contradiction to the assumption.
\end{proof}

\subsection{Normal metric}
We define a new metric on $V=X^{(0)}$ using normal cube paths~\cite{niblo1998geometry, reeves1995biautomatic}.

\begin{defn}
For any $x,y\in V$, define $d_{nor}(x,y)$ to be the length of the normal cube path from $x$ to $y$. We call $d_{nor}$ the \emph{normal metric} on $V$.
\end{defn}

One needs to verify that $d_{nor}$ is indeed a metric. It is easy to see that $d_{nor}(x,y) \geqslant 0$, and $d_{nor}(x,y)=0$ if and only if $x=y$. Note that the normal cube path from $x$ to $y$ is not the one from $y$ to $x$ in general, so the symmetric relation is not that obvious. In order to show the symmetric relation and the triangle inequality, we give the following characterization.

\begin{lem}\label{char nor dist}
For $x,y \in V$, let $<$ be the relation defined as above. Then
$$d_{nor}(x,y)=\sup\{m+1: h_0 <h_1 <\cdots <h_m, h_i \in H(x,y)\}.$$
\end{lem}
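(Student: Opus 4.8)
The statement says that $d_{nor}(x,y)$ equals the supremum of lengths of strictly ascending chains of hyperplanes in $H(x,y)$ (ordered by inclusion of the halfspaces containing $x$). I would prove this by establishing the two inequalities separately, using the normal cube path from $x$ to $y$ as the bridge. Write $\{C_i\}_{i=0}^{n}$ for that normal cube path, so $d_{nor}(x,y) = n+1$ (the length is the number of cubes), with vertices $x = v_0, v_1, \dots, v_{n+1} = y$.

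**Step 1: $d_{nor}(x,y) \leq \sup\{m+1 : \dots\}$, i.e. producing a long chain.** Since $C_n$ has dimension at least $1$, pick any hyperplane $h$ with $h \pitchfork C_n$. By Proposition~\ref{normal chain} there is a chain $h_0 < h_1 < \cdots < h_{n-1} < h_n = h$ with $h_i \pitchfork C_i$; each $h_i$ separates $x$ from $y$ because a normal cube path, being a geodesic-like object, has each of its cubes lying in the interval $[x,y]$ (equivalently, each hyperplane it crosses separates the endpoints — this uses Proposition~\ref{normal cube path intersection property} together with the construction recalled after Proposition~\ref{fellow-traveller}). This chain has $n+1$ elements, so the supremum is at least $n+1 = d_{nor}(x,y)$.

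**Step 2: $d_{nor}(x,y) \geq \sup\{m+1 : \dots\}$, i.e. no chain is longer than $n+1$.** Suppose $h_0 < h_1 < \cdots < h_m$ is a strictly ascending chain in $H(x,y)$. Each $h_j$ is crossed by exactly one cube $C_{i_j}$ of the normal cube path (every hyperplane separating $x$ from $y$ is crossed by the normal cube path, and by Proposition~\ref{normal cube path intersection property} exactly once). I claim $j \mapsto i_j$ is strictly increasing, which forces $m \leq n$ and hence $m+1 \leq n+1 = d_{nor}(x,y)$. Indeed, if $h_j < h_k$ then $h_j$ and $h_k$ are disjoint hyperplanes (Lemma~\ref{normal contain} in the direction: comparable implies disjoint — actually the non-trivial direction; here I just need that a strict containment of halfspaces precludes crossing, which is immediate), so $i_j \neq i_k$; and replaying the argument in the proof of Lemma~\ref{normal contain} shows the one crossed earlier on the cube path is the one whose $x$-side halfspace is smaller, i.e. $h_j$, giving $i_j < i_k$. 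So distinct chain elements hit distinct cubes in increasing order.

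**Step 3: symmetry and the triangle inequality (the stated motivation).** Having proved the characterization, symmetry of $d_{nor}$ is immediate: reversing the roles of $x$ and $y$ turns every halfspace $h^-$ into its complement and reverses all inclusions, so ascending chains in $H(x,y)$ correspond bijectively to ascending chains in $H(y,x)$, preserving length; hence $d_{nor}(x,y) = d_{nor}(y,x)$. For the triangle inequality $d_{nor}(x,z) \leq d_{nor}(x,y) + d_{nor}(y,z)$, one takes a maximal chain realizing $d_{nor}(x,z)$ inside $H(x,z)$ and splits it according to whether each hyperplane separates $y$ from $x$ or from $z$ (every hyperplane in $H(x,z)$ lies in $H(x,y) \cup H(y,z)$); each piece is still an ascending chain in the respective set, so the total length is at most the sum of the two normal distances.

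**Expected main obstacle.** The crux is Step 2 — arguing that a strictly ascending chain of separating hyperplanes injects, order-preservingly, into the cubes of the normal cube path. The subtlety is that the order on $H(x,y)$ is by halfspace inclusion, not by position along a geodesic, so I must be careful that "$h_j < h_k$" genuinely forces the crossing cube index to increase and not merely to differ; this is exactly the content extracted in the proof of Lemma~\ref{normal contain}, and I would invoke that argument rather than redo it. A secondary point to get right cleanly is the claim used in Step 1 that every hyperplane crossed by the normal cube path separates $x$ from $y$ — this follows since each $v_i$ lies on a geodesic from $x$ to $y$ (normal cube paths realize the $\ell^1$-distance), so each crossed hyperplane separates the endpoints.
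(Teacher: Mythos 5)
Your proposal is correct and follows essentially the same two-inequality argument as the paper: Proposition~\ref{normal chain} produces a chain of length $n+1$, and for the reverse inequality, comparable hyperplanes are disjoint, so hyperplanes in a chain meet distinct cubes of the normal cube path, giving at most $n+1$ of them. Your Step 2 overproves slightly (the paper needs only injectivity of $j\mapsto i_j$, not monotonicity), and your Step 3 on symmetry and the triangle inequality belongs to the subsequent proposition rather than this lemma, but neither affects correctness.
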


\begin{proof}
Suppose $\{C_i\}_{i=0}^n$ is the normal cube path from $x$ to $y$, so $d_{nor}(x,y)=n+1$. Denote the right hand side of the equality in the lemma by $n'$. Now for any chain $h_0 < h_1 < \cdots < h_m$ in $H(x,y)$, by Proposition \ref{normal cube path intersection property}, $h_i$ intersects with just one cube, denoted by $C_{k(i)}$. Obviously, if $h,k \pitchfork C_i$, then $h\pitchfork k$. So $k(i) \neq k(j)$ if $i\neq j$, which implies $m \leqslant n$, so $n' \leqslant n$.

On the other hand, for any $h \pitchfork C_n$, by Proposition \ref{normal chain}, we have a chain of hyperplanes $h_0<h_1<\cdots<h_{n-1}<h_n=h$, such that $h_i \pitchfork C_i$, which implies $n \leqslant n'$.
\end{proof}

\begin{prop}
$d_{nor}$ is indeed a metric on $V$.
\end{prop}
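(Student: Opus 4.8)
The plan is to deduce everything from the characterisation in Lemma~\ref{char nor dist}, which expresses $d_{nor}(x,y)$ purely in terms of the poset $(H(x,y),<)$ and so dissolves the apparent asymmetry in the definition of the normal metric. Non-negativity, and the fact that $d_{nor}(x,y)=0$ exactly when $x=y$, are immediate from the definition, since the normal cube path from $x$ to $y$ is the empty sequence precisely when $x=y$.

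For symmetry I would note that $H(x,y)=H(y,x)$ as sets of hyperplanes, the only difference being the choice of ``negative'' halfspace used to define the order $<$. If $h<k$ with respect to the basepoint $x$, i.e. $h^-\subsetneq k^-$ with $x\in h^-\cap k^-$, then taking complements gives $k^+\subsetneq h^+$; and since each of $h,k$ separates $x$ from $y$, the halfspaces $h^+,k^+$ are exactly the ones containing $y$, so $k<h$ with respect to the basepoint $y$. Therefore a chain $h_0<h_1<\cdots<h_m$ in $H(x,y)$ (basepoint $x$) is, read backwards, a chain of the same length in $H(y,x)$ (basepoint $y$), and conversely. By Lemma~\ref{char nor dist} the two suprema coincide, hence $d_{nor}(x,y)=d_{nor}(y,x)$.

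The main step is the triangle inequality $d_{nor}(x,z)\leqslant d_{nor}(x,y)+d_{nor}(y,z)$. By Lemma~\ref{char nor dist} it suffices to bound the length of an arbitrary chain $h_0<h_1<\cdots<h_m$ in $H(x,z)$. The key observation is that every hyperplane separating $x$ from $z$ separates exactly one of the pairs $\{x,y\}$ and $\{y,z\}$, according to which side of it the vertex $y$ lies on; so the chain splits into $I=\{i:h_i\in H(x,y)\}$ and $J=\{i:h_i\in H(y,z)\}$. Each of these is a subchain, and I must check that the order it inherits from $H(x,z)$ agrees with the intrinsic order of $H(x,y)$, respectively $H(y,z)$ --- this holds because for a hyperplane lying in both $H(x,z)$ and $H(x,y)$ the halfspace containing $x$ is the same in either context. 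Hence Lemma~\ref{char nor dist} gives $|I|\leqslant d_{nor}(x,y)$ and $|J|\leqslant d_{nor}(y,z)$, so $m+1=|I|+|J|\leqslant d_{nor}(x,y)+d_{nor}(y,z)$; taking the supremum over all chains in $H(x,z)$ completes the argument.

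I do not expect a serious obstacle: the genuinely nontrivial content --- relating the length of a normal cube path to chains of hyperplanes --- is already packaged in Lemma~\ref{char nor dist}, and what remains is elementary bookkeeping about finite posets. The only points needing care are tracking which halfspace plays the role of ``negative'' when the basepoint changes (in the symmetry argument) and verifying that the partial order on $H(x,z)$ restricts compatibly to $H(x,y)$ and to $H(y,z)$ (in the triangle inequality).
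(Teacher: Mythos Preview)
Your proposal is correct and follows essentially the same approach as the paper: both use Lemma~\ref{char nor dist} to obtain symmetry from the fact that $H(x,y)$ and $H(y,x)$ are the same set with reversed order, and both prove the triangle inequality by splitting a chain in $H(x,z)$ into its parts lying in $H(x,y)$ and in $H(y,z)$, after checking that the inclusions $H(x,y)\cap H(x,z)\hookrightarrow H(x,y)$ and $H(y,z)\cap H(x,z)\hookrightarrow H(y,z)$ are order preserving. Your write-up is simply more explicit about why the restricted orders agree (and you might, for completeness, spell out the $H(y,z)$ case just as you did the $H(x,y)$ case: for $h\in H(x,z)\cap H(y,z)$ the points $x$ and $y$ lie on the same side of $h$, so the ``negative'' halfspace is unchanged).
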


\begin{proof}
By Lemma \ref{char nor dist},  $H(x,y)=H(y,x)$, and as posets they carry opposite orders. One can thus deduce $d_{nor}(x,y)=d_{nor}(y,x)$. For $x,y,z\in V$, $H(x,y) \triangle H(y,z)=H(x,z)$, where $\triangle$ is the symmetric difference operation. The inclusions of $H(x,y)\cap H(x,z)$ into $H(x,y)$ and $H(y,z)\cap H(x,z)$ into $H(y,z)$ are both order preserving, and therefore, by Lemma \ref{char nor dist}, we have $d_{nor}(x,z) \leqslant d_{nor}(x,y)+d_{nor}(y,z)$.
\end{proof}

\subsection{Normal balls and normal spheres}
Recall that for any two points $x,y$ in $V=X^{(0)}$, the interval between them is
$$[x,y]=\{z \in V: d(x,y)=d(x,z)+d(z,y)\}.$$
In other words, $[x,y]$ is the set of vertices on the union of all the edge geodesics from $x$ to $y$. A subset $Y \subseteq V$ is called \emph{convex}, if for any $x,y \in Y$, $[x,y] \subseteq Y$.

Now let $B(x,n)$ be the closed ball in the edge metric with centre $x \in V$ and radius $n$. Generally, $B(x,n)$ is not convex (for example, take $X=\mathbb{Z}^2$). However, as we will see, for the normal metric balls are convex. More precisely, we define the \emph{normal ball} with centre $x \in V$ and radius $n$ to be
$$B_{nor}(x,n)=\{y \in V: d_{nor}(x,y) \leqslant n\}$$
and the \emph{normal sphere}
with centre $x \in V$ and radius $n$ to be
$$S_{nor}(x,n)=\{y \in V: d_{nor}(x,y)= n\}.$$

\begin{lem}\label{conv}
$B_{nor}(x,n)$ is convex for all $x\in V$ and $n \in \mathbb{N}$.
\end{lem}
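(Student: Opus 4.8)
The plan is to reduce convexity of $B_{nor}(x,n)$ to the characterization of $d_{nor}$ in terms of chains of separating hyperplanes (Lemma \ref{char nor dist}). Recall $d_{nor}(x,y)-1$ equals the length of the longest strictly increasing chain in the poset $(H(x,y),\leqslant)$, where $h\leqslant k$ means $h^-\subseteq k^-$ with $h^-$ the halfspace containing $x$. So I want to show: if $y,z\in B_{nor}(x,n)$ and $w\in[y,z]$, then every strictly increasing chain in $H(x,w)$ has length at most $n-1$, i.e.\ $d_{nor}(x,w)\leqslant n$. The key structural fact is that $w\in[y,z]$ means $w$ lies in every halfspace containing both $y$ and $z$; equivalently, each hyperplane separating $x$ from $w$ must separate $x$ from $y$ or $x$ from $z$ (if $h$ separated $x$ from $w$ but not from $y$ and not from $z$, then $y,z$ would lie on the $x$-side of $h$ while $w$ does not, contradicting $w\in[y,z]$). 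Hence $H(x,w)\subseteq H(x,y)\cup H(x,z)$, and moreover on $H(x,w)\cap H(x,y)$ the order induced from $H(x,w)$ agrees with that from $H(x,y)$ (both are "contains the $x$-side"), similarly for $z$.

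First I would fix a maximal strictly increasing chain $h_0<h_1<\cdots<h_m$ in $H(x,w)$, so that $d_{nor}(x,w)=m+1$; by Lemma \ref{normal contain} these hyperplanes are pairwise non-crossing, so they are totally ordered by nesting of their $x$-sides. Next I would split the chain according to membership: let $A=\{i:h_i\in H(x,y)\}$ and $B=\{i:h_i\in H(x,z)\}$, so $A\cup B=\{0,\dots,m\}$ by the containment above. The sub-chain indexed by $A$ is a strictly increasing chain in $H(x,y)$ (the order is inherited), hence has length $\leqslant d_{nor}(x,y)-1\leqslant n-1$, i.e.\ $|A|\leqslant n$; likewise $|B|\leqslant n$. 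This alone only gives $m+1\leqslant 2n$, which is not good enough, so the real work is to improve the bound using the fact that the chain is \emph{nested}, not just a pair of chains.

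The hard part will be this improvement: I need to show that a nested chain of length $m+1$ in $H(x,w)$, each member lying in $H(x,y)$ or $H(x,z)$, cannot exceed $n$ unless one of $H(x,y),H(x,z)$ already has a chain that long. The idea is that because $w\in[y,z]$, the hyperplanes in $H(x,y)\setminus H(x,z)$ and those in $H(x,z)\setminus H(x,y)$ interact controlledly along the nested chain — intuitively, once the chain "passes" $y$ it can only continue with hyperplanes also separating $x$ from $z$ (or vice versa), because of how $[y,z]$ sits relative to $x$. Concretely: if $h_i\in H(x,y)\setminus H(x,z)$ and $h_j\in H(x,z)\setminus H(x,y)$ with $i<j$ in the nested chain, then $h_i^-\subsetneq h_j^-$ (as $x$-sides), and one checks using the median $\mu(y,z,w)=w$ together with axioms M1--M3 that this configuration forces all of $h_0,\dots,h_j$ to lie in $H(x,z)$ — a contradiction. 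Thus the chain, read from $h_0$ upward, stays inside one of $H(x,y)$, $H(x,z)$ until possibly switching once into the intersection and then into the other; carefully bookkeeping the overlap $H(x,y)\cap H(x,z)$ (which is shared, not double-counted) yields $m+1\leqslant\max\{d_{nor}(x,y),d_{nor}(x,z)\}\leqslant n$. I expect the cleanest route is to phrase everything via the median: $w\in[y,z]$ gives $w=\mu(x,y,z)$ composed appropriately, and the nesting statement about halfspaces translates into identities among medians that pin down which side of each $h_i$ the points $y$ and $z$ lie on, forcing the chain to be "monotone" in its $A$-vs-$B$ labelling. Once that monotonicity is established, the length bound is immediate.
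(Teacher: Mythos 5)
Your proposal is correct in substance and takes a genuinely different route from the paper. The paper works along a geodesic: it assumes a geodesic from $z$ to $w$ (both in $B_{nor}(x,n)$) exits the ball, takes the first exit vertex $u$ and its predecessor $z'$, produces via Lemma~\ref{char nor dist} a chain $h_0<\cdots<h_{n-1}<h$ in $H(x,u)$ topped by the single hyperplane $h$ separating $z'$ from $u$, then uses the fact that a geodesic crosses $h$ at most once (so $w$ sits on the $u$-side of $h$, hence by nesting on the $u$-side of every $h_i$) to exhibit a length-$(n+2)$ chain in $H(x,w)$, a contradiction. You instead work directly with an arbitrary $w\in[y,z]$ and bound all chains of $H(x,w)$ at once; your approach is cleaner in that it avoids the auxiliary choice of geodesic and exit vertex and reveals the underlying structural reason the bound holds.

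One correction: the ``hard part'' you flagged is actually immediate, and needs no median-axiom computations. If $h_i\in H(x,y)\setminus H(x,z)$ and $h_j\in H(x,z)\setminus H(x,y)$ with $i<j$, then $h_i^-\subsetneq h_j^-$ gives $h_j^+\subsetneq h_i^+$; since $z\in h_j^+$ this forces $z\in h_i^+$, i.e.\ $h_i\in H(x,z)$, contradiction (and symmetrically if $j<i$). So a chain in $H(x,w)$ can never contain both a $y$-only and a $z$-only hyperplane, which means it lies \emph{entirely} in $H(x,y)$ or entirely in $H(x,z)$. Your envisioned structure --- ``stays inside one, switches to the intersection, then to the other'' --- cannot actually occur, and the bookkeeping you anticipate is unnecessary: since the order on $H(x,w)\cap H(x,y)$ agrees with that on $H(x,y)$ (both are nesting of $x$-sides), Lemma~\ref{char nor dist} gives $m+1\leqslant\max\{d_{nor}(x,y),d_{nor}(x,z)\}\leqslant n$ directly. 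With this simplification, your argument is a complete and correct alternative proof.
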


\begin{proof}
Given $z,w \in B_{nor}(x,n)$, and a geodesic $\gamma$ from $z$ to $w$, if $\gamma \nsubseteq B_{nor}(x,n)$, we can assume $u$ is the first vertex on $\gamma$ which is not in $B_{nor}(x,n)$, which implies $d_{nor}(x_0,u)=n+1$. Let $z'$ be the vertex preceding $u$ on $\gamma$, so $d_{nor}(x_0,z')=n$ (since $d_{nor}(z',u)=1$). Since $d(z',u)=1$, there exists a unique hyperplane $h$ separating $z'$ from $u$, so $H(x_0,u)=H(x_0,z') \sqcup \{h\}$. Now
according to Lemma \ref{char nor dist}, there exists a chain $h_0 < \cdots < h_{n-1} < h$ in $H(x_0,u)$ with $h_i \in H(x_0,z')$. Since every geodesic intersects with any hyperplane at most once (see for example~\cite{sageev1995ends}), $w \in h^+$, which implies $h_0 < \cdots < h_{n-1} < h$ is also a chain in $H(x_0,w)$. This is a contradiction to $d_{nor}(x_0,w) \leqslant n,$ by Lemma \ref{char nor dist}.
\end{proof}

Since the intersection of two convex sets is still convex, we have the following corollary.
\begin{cor}\label{conv cor}
For any $x \in V$ and $n \in \mathbb{N}$, the set $[x_0,x] \cap B_{nor}(x_0,n)$ is convex.
\end{cor}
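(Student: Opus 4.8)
The plan is to deduce this directly by combining two convexity statements, both already available, with the elementary fact that an intersection of convex subsets of the median graph $X^{(1)}$ is convex. So there is essentially no new work: the corollary is a formal consequence of Lemma~\ref{conv} together with the convexity of intervals.

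First I would record that the interval $[x_0,x]$ is convex. In the hyperplane description recalled in Subsection~2.3, $[x_0,x]$ is precisely the set of vertices lying in every halfspace that contains both $x_0$ and $x$. Each such halfspace is convex, because an edge geodesic crosses a given hyperplane at most once (\cite{niblo1998geometry, sageev1995ends}), so $[x_0,x]$ is an intersection of convex sets. Since for a family $\{Y_\alpha\}$ of convex subsets and any $z,w\in\bigcap_\alpha Y_\alpha$ one has $[z,w]\subseteq Y_\alpha$ for every $\alpha$, hence $[z,w]\subseteq\bigcap_\alpha Y_\alpha$, an arbitrary intersection of convex sets is convex; thus $[x_0,x]$ is convex.

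Next I would invoke Lemma~\ref{conv}, which states that the normal ball $B_{nor}(x_0,n)$ is convex for every $n\in\mathbb{N}$. Then $[x_0,x]\cap B_{nor}(x_0,n)$ is an intersection of two convex subsets of $V$, and by the same elementary observation it is convex, which is the assertion.

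The only point requiring a little care, rather than a genuine obstacle, is that throughout this section ``convex'' means the combinatorial notion defined via intervals in the median graph $X^{(1)}$ (i.e. $Y$ is convex if $x,y\in Y$ implies $[x,y]\subseteq Y$), not metric convexity of the CAT(0) realization; one should make sure both ingredients are convex in exactly this sense, which they are. In fact, since this is immediate from the preceding lemma and the remark that intersections of convex sets are convex, it is natural simply to state it as a corollary with a one-line proof, as the text does.
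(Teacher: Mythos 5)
Your proof is correct and is essentially identical to the paper's: the corollary is obtained by noting that $[x_0,x]$ is convex (being an intersection of halfspaces), that $B_{nor}(x_0,n)$ is convex by Lemma~\ref{conv}, and that the intersection of convex sets is convex.
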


It is well known that for a convex subset $Y$ in a CAT(0) cube complex and a point $v \notin Y$, there is a unique point in $Y$ which is closest to $v$ (see, for example,~\cite{BH99}). This statement is true both for the intrinsic CAT(0) metric on the cube complex and the edge metric on the vertex set,  and we have a similar statement for the normal distance:

\begin{prop}\label{def for v}
There exists a unique point $v\in [x_0,x] \cap B_{nor}(x_0,n)$ such that
$$[x_0,x] \cap B_{nor}(x_0,n) \subseteq [x_0,v].$$

The point $v$  is characterized by:

$$d(x_0,v)=\max\{d(x_0,v'): v'\in [x_0,x] \cap B_{nor}(x_0,n)\}.$$

Furthermore, if $d_{nor}(x_0,x) \geqslant n$, then $v\in [x_0,x] \cap S_{nor}(x_0,n)$, which implies that $v$ is also the unique point in $[x_0,x] \cap S_{nor}(x_0,n)$ such that
$$d(x_0,v)=\max\{d(x_0,v'): v'\in [x_0,x] \cap S_{nor}(x_0,n)\}.$$
\end{prop}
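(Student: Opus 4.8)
Write $Y:=[x_0,x]\cap B_{nor}(x_0,n)$; it is convex by Corollary~\ref{conv cor} and nonempty as $x_0\in Y$. I will use freely the identity $d(x_0,z)=\sharp H(x_0,z)$, the inclusion $H(x_0,z)\subseteq H(x_0,x)$ for $z\in[x_0,x]$, and the equivalence $z\in[x_0,z']\iff H(x_0,z)\subseteq H(x_0,z')$ (both immediate from the halfspace description of intervals). Since $d(x_0,\cdot)\leqslant d(x_0,x)$ on $Y$, I may choose $v\in Y$ maximizing $d(x_0,\cdot)$, and the plan is to show this $v$ satisfies all the assertions.

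\emph{The inclusion $Y\subseteq[x_0,v]$.} Suppose some $w\in Y$ lies outside $[x_0,v]$; pick a hyperplane $h$ with $x_0,v\in h^-$ and $w\in h^+$, so $h\in H(x_0,w)\setminus H(x_0,v)$. Put $v':=\mu(v,w,x)$. As $v'\in[v,w]$ and $v,w\in Y$ with $Y$ convex, $v'\in Y$. Since $v,w\in[x_0,x]$, a short computation with halfspaces and medians (a hyperplane separates $x_0$ from $\mu(v,w,x)$ iff it separates $x_0$ from at least two of $v,w,x$) gives $H(x_0,v')=H(x_0,v)\cup H(x_0,w)$, which properly contains $H(x_0,v)$ because $h$ lies in it. Hence $d(x_0,v')=\sharp H(x_0,v')>\sharp H(x_0,v)=d(x_0,v)$, contradicting maximality. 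So $Y\subseteq[x_0,v]$. Uniqueness follows at once: if $v_1,v_2\in Y$ both satisfy $Y\subseteq[x_0,v_i]$, then $v_1\in[x_0,v_2]$ and $v_2\in[x_0,v_1]$, forcing $d(x_0,v_1)=d(x_0,v_2)$ and then $d(v_1,v_2)=0$. The distance characterization is also now clear: every $w\in Y\subseteq[x_0,v]$ has $d(x_0,w)=\sharp H(x_0,w)\leqslant\sharp H(x_0,v)=d(x_0,v)$, with equality only when $d(w,v)=0$.

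\emph{The ``furthermore'' part.} Assume $d_{nor}(x_0,x)\geqslant n$ and let $v_n$ be the $n$-th vertex of the normal cube path from $x_0$ to $x$, which exists since that path has length $d_{nor}(x_0,x)$. A normal cube path crosses each hyperplane at most once (Proposition~\ref{normal cube path intersection property}), so its vertices lie in $[x_0,x]$; moreover its length-$n$ initial segment is again a normal cube path, whence $d_{nor}(x_0,v_n)=n$ and $v_n\in[x_0,x]\cap S_{nor}(x_0,n)\subseteq Y\subseteq[x_0,v]$. Thus $H(x_0,v_n)\subseteq H(x_0,v)$, and since the order $<$ on $H(x_0,v_n)$ is the restriction of that on $H(x_0,v)$ (both using the halfspace containing $x_0$), Lemma~\ref{char nor dist} yields $n=d_{nor}(x_0,v_n)\leqslant d_{nor}(x_0,v)\leqslant n$, so $v\in S_{nor}(x_0,n)$. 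Since $\{v\}\subseteq[x_0,x]\cap S_{nor}(x_0,n)\subseteq Y$ and $v$ is the unique maximizer of $d(x_0,\cdot)$ on $Y$, it is also the unique maximizer on $[x_0,x]\cap S_{nor}(x_0,n)$.

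The only steps needing care are the two hyperplane-bookkeeping points highlighted above: deriving $H(x_0,\mu(v,w,x))=H(x_0,v)\cup H(x_0,w)$ from $v,w\in[x_0,x]$, and checking that the order on $H(x_0,v_n)$ really is the restriction of that on $H(x_0,v)$ so Lemma~\ref{char nor dist} applies verbatim. Everything else is a routine consequence of the convexity of $Y$ together with $d(x_0,\cdot)=\sharp H(x_0,\cdot)$.
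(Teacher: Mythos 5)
Your proof is correct, and the first two parts (existence of the maximizer $v$, the inclusion $Y\subseteq[x_0,v]$, and uniqueness) are essentially the same median-with-$x$ argument as the paper's, just organized in the opposite order: the paper first proves the maximizer is unique via $\mu(z,w,x)$ and then derives the inclusion via $\mu(u,v,x)=v$ followed by Lemma~\ref{basic median}, whereas you establish the inclusion directly by the hyperplane count $\sharp H(x_0,\mu(v,w,x)) = \sharp\bigl(H(x_0,v)\cup H(x_0,w)\bigr) > \sharp H(x_0,v)$ and then read off uniqueness. The ``furthermore'' part is where you genuinely diverge: the paper takes an edge geodesic from $v$ to $x$, looks at the first vertex $y_1$ past $v$, and gets a contradiction from $d_{nor}(x_0,y_1)\leqslant d_{nor}(x_0,v)+1\leqslant n$ together with the maximality of $d(x_0,v)$; you instead produce the $n$-th vertex $v_n$ of the normal cube path from $x_0$ to $x$, observe that its initial segment is itself a normal cube path so $d_{nor}(x_0,v_n)=n$, place $v_n\in Y\subseteq[x_0,v]$, and squeeze $d_{nor}(x_0,v)=n$ via Lemma~\ref{char nor dist}. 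Your route is slightly more structural (and foreshadows Proposition~\ref{char for v}, which you take care not to use circularly), while the paper's is a shorter triangle-inequality argument in $d_{nor}$. Both are valid; the two bookkeeping facts you flag — $H(x_0,\mu(v,w,x))=H(x_0,v)\cup H(x_0,w)$ for $v,w\in[x_0,x]$, and that the order $<$ on $H(x_0,v_n)$ restricts from that on $H(x_0,v)$ since both are defined via the halfspace containing $x_0$ — do check out.
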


\begin{proof}
If there exist $z\neq w \in [x_0,x] \cap B_{nor}(x_0,n)$ such that $d(x_0,z)=d(x_0,w)$ attains the maximum, consider the median $m=\mu(z,w,x)$. By Corollary \ref{conv cor}, $m\in [z,w] \subseteq [x_0,x] \cap B_{nor}(x_0,n)$, so $d(m,x_0)=d(z,x_0)=d(w,x_0)$. While $m\in [z,x] \cap [w,x]$, so $m=z=w$, which is a contradiction.

By Corollary \ref{conv cor} $[x_0,v]\subseteq [x_0,x] \cap B_{nor}(x_0,n)$. Conversely, for any $u\in [x_0,x] \cap B_{nor}(x_0,n)$, let $m=\mu(u,v,x) \in [u,v]$. By Corollary \ref{conv cor}, $m\in [x_0,x] \cap B_{nor}(x_0,n)$. While $m \in [v,x]$, so $d(m,x_0) \geqslant d(v,x_0)$, which implies $m=v$ by the choice of $v$, i.e. $\mu(u,v,x)=v$, so $v \in [u,x]$. Now by Lemma \ref{basic median}, $u \in [x_0,v]$.

Now for $x,n$ satisfying $d_{nor}(x_0,x) \geqslant n$, if $v \in [x_0,x] \cap B_{nor}(x_0,n-1)$, take a geodesic $\gamma$ from $v$ to $x$, and let $v=y_0,y_1,\ldots,y_k=x$ be the vertices on $\gamma$. Since $d_{nor}(x_0,x)\geqslant n$, $x\neq v$, which implies $k>0$. Now for $y_1$, since $y_1 \in [v,x]$, $d(x_0,v)<d(x_0,y_1)$. By the definition of $v$, we know $y_1 \notin [x_0,x] \cap B_{nor}(x_0,n)$, so $y_1 \notin B_{nor}(x_0,n)$. However, since $d(v,y_1)=d_{nor}(v,y_1)=1$, we have
$$d_{nor}(x_0,y_1) \leqslant d_{nor}(x_0,v) + d_{nor}(v,y_1) \leqslant n,$$
which is a contradiction.
\end{proof}

To use the above proposition more flexibly, we give another characterization of $v$, which can also be viewed as an alternative definition of $v$. In the rest of this subsection, we fix  $x\in V$ and $n\in \mathbb{N}$ with $d_{nor}(x_0,x) \geqslant n$.

\begin{prop}\label{char for v}
Let $\{C_i\}_{i=0}^N$ be the normal cube path from $x_0$ to $x$, and $\tilde{v}=v_n$ be the n-th vertex on this normal cube path. Then $\tilde{v}=v$, which is provided by Proposition~\ref{def for v}.
\end{prop}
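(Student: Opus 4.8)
The plan is to show that the $n$-th vertex $\tilde v = v_n$ on the normal cube path from $x_0$ to $x$ coincides with the point $v$ characterized in Proposition~\ref{def for v}, by verifying that $\tilde v$ satisfies that characterization, namely that $\tilde v \in [x_0,x] \cap S_{nor}(x_0,n)$ and that $\tilde v$ realizes the maximum of $d(x_0,\cdot)$ over $[x_0,x] \cap B_{nor}(x_0,n)$. Since $\tilde v$ lies on a normal cube path from $x_0$ to $x$, it automatically lies in $[x_0,x]$, and since the first $n$ cubes $\{C_i\}_{i=0}^{n-1}$ of the normal cube path from $x_0$ to $x$ form the normal cube path from $x_0$ to $\tilde v$ (a standard property of normal cube paths, that initial segments are again normal cube paths), we get $d_{nor}(x_0,\tilde v) = n$, so $\tilde v \in [x_0,x] \cap S_{nor}(x_0,n)$. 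By uniqueness in Proposition~\ref{def for v}, it then suffices to prove that $d(x_0, v') \leqslant d(x_0,\tilde v)$ for every $v' \in [x_0,x] \cap B_{nor}(x_0,n)$, or, equivalently by that proposition, that $[x_0,x]\cap B_{nor}(x_0,n) \subseteq [x_0,\tilde v]$.

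First I would reformulate the inclusion $[x_0,x]\cap B_{nor}(x_0,n) \subseteq [x_0,\tilde v]$ in the language of hyperplanes: a vertex $u \in [x_0,x]$ lies in $[x_0,\tilde v]$ if and only if every hyperplane separating $x_0$ from $u$ also separates $x_0$ from $\tilde v$, i.e. $H(x_0,u) \subseteq H(x_0,\tilde v)$. The hyperplanes in $H(x_0,\tilde v)$ are precisely those crossing one of the cubes $C_0,\dots,C_{n-1}$. So I need: if $u \in [x_0,x]$ with $d_{nor}(x_0,u) \leqslant n$ and $h \in H(x_0,u)$, then $h$ crosses some $C_i$ with $i \leqslant n-1$. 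Suppose not; then $h \pitchfork C_l$ for some $l \geqslant n$ (using that $h \in H(x_0,u) \subseteq H(x_0,x)$ so $h$ crosses some cube on the normal cube path from $x_0$ to $x$). By Proposition~\ref{normal chain}, there is a chain $h_0 < h_1 < \cdots < h_{l-1} < h_l = h$ with $h_i \pitchfork C_i$; all of these lie in $H(x_0,x)$. Since $u \in [x_0,x]$, a hyperplane in $H(x_0,x)$ lies in $H(x_0,u)$ or in $H(u,x)$ according to which side of it $u$ lies on, and $H(x_0,u)$ is downward-closed in this order restricted to $H(x_0,x)$ (if $h \in H(x_0,u)$ and $h' < h$ then $x_0 \in (h')^- \subseteq h^-$ while $u \in h^+ \subseteq$ complement of $(h')^-$, wait — more carefully, $h' < h$ means $(h')^- \subseteq h^-$, so $u \notin h^-$ forces nothing directly; instead use that $u \in h^-$... let me just say: since $h \in H(x_0,u)$, $x_0,u$ are separated by $h$; for $h' < h$ with $h' \in H(x_0,x)$, one checks $h'$ separates $x_0$ from $u$ too, because $u \in [x_0,x]$ and the halfspace structure forces it). Thus the whole chain $h_0 < \cdots < h_l$ lies in $H(x_0,u)$, and by Lemma~\ref{char nor dist} this gives $d_{nor}(x_0,u) \geqslant l+1 \geqslant n+1$, contradicting $d_{nor}(x_0,u) \leqslant n$.

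Having established $[x_0,x]\cap B_{nor}(x_0,n) \subseteq [x_0,\tilde v]$, I invoke Proposition~\ref{def for v}: it says there is a \emph{unique} point $v$ of $[x_0,x]\cap B_{nor}(x_0,n)$ with $[x_0,x]\cap B_{nor}(x_0,n) \subseteq [x_0,v]$, and that $v \in [x_0,x]\cap S_{nor}(x_0,n)$ when $d_{nor}(x_0,x) \geqslant n$. Since $\tilde v$ has both properties — it lies in $[x_0,x]\cap B_{nor}(x_0,n)$ and the displayed inclusion holds with $v$ replaced by $\tilde v$ — uniqueness yields $\tilde v = v$.

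The main obstacle is the downward-closure step: showing that a hyperplane $h' \in H(x_0,x)$ with $h' < h$ and $h \in H(x_0,u)$ actually separates $x_0$ from $u$, i.e. that the order-theoretic chain witnessing a large normal distance from $x_0$ to $x$ can be ``transported'' to a chain witnessing a large normal distance from $x_0$ to $u$ whenever $u$ is on the interval and the top hyperplane of the chain already separates $x_0$ from $u$. This requires carefully using that $u \in [x_0,x]$ means $u$ lies in every halfspace containing both $x_0$ and $x$ (the hyperplane characterization of intervals recalled in the excerpt), combined with Lemma~\ref{normal contain} relating non-crossing to the order $\leqslant$; once the order on $H(x_0,x)$ is correctly set up with $x_0$ on the ``$-$'' side, the inclusion $(h')^- \subseteq h^-$ together with $u \in h^+$... here one must instead argue via $u \in h^-$ or $u \in h^+$ case analysis, noting that $h \in H(x_0,u)$ pins down which, and that $h' \leqslant h$ then pins down the side of $h'$ as well. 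I expect this to be a short but slightly delicate poset argument; everything else is a direct assembly of Proposition~\ref{normal chain}, Lemma~\ref{char nor dist}, and Proposition~\ref{def for v}.
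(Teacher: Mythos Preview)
Your approach is correct and essentially the same as the paper's: both reduce to showing $H(x_0,u) \subseteq H(x_0,\tilde v)$ for every $u \in [x_0,x] \cap B_{nor}(x_0,n)$ via downward-closure of $H(x_0,u)$ in the poset $H(x_0,x)$ together with the chain produced by Proposition~\ref{normal chain} (the paper packages this step as a separate inductive maximality lemma, Lemma~\ref{max}, applied to $A = H(x_0,v)$, but the content is identical). The downward-closure step you flag as the main obstacle is in fact immediate once the sides are straightened out: $h' < h$ means $(h')^- \subsetneq h^-$, hence $h^+ \subseteq (h')^+$, and since $h \in H(x_0,u)$ gives $u \in h^+$, you get $u \in (h')^+$, i.e.\ $h' \in H(x_0,u)$.
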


To prove this result, let us focus on subsets in $H(x_0,x)$. Recall that $H(x_0,x)$ is endowed with the relation $\leqslant$, as defined prior to Lemma \ref{normal contain}.
\begin{defn}
A subset $A \subseteq H(x_0,x)$ is called \emph{closed} (under $<$), if $\forall h \in A$ and $k<h$, $k\in A$.
\end{defn}

\begin{lem}\label{max}
Let $\tilde{v}$ be the $n$-th vertex on the normal cube path from $x_0$ to $x$, then $H(x_0,\tilde{v})$ is maximal in the following sense: for any closed $A \subseteq H(x_0,x)$ which contains chains only with lengths at most $n$, $A\subseteq H(x_0,\tilde{v})$.
\end{lem}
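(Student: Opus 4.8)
The plan is to prove the inclusion directly, by first giving an explicit description of $H(x_0,\tilde{v})$ and then showing that a hyperplane of $A$ lying outside $H(x_0,\tilde{v})$ would force $A$ to contain a chain that is too long.

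First I would record the description of $H(x_0,\tilde{v})$. Write $\{C_i\}_{i=0}^N$ for the normal cube path from $x_0$ to $x$ and $v_0=x_0,v_1,\dots,v_{N+1}=x$ for its vertices, so that $\tilde{v}=v_n$. The truncation $\{C_i\}_{i=0}^{n-1}$ is itself the normal cube path from $x_0$ to $v_n$: the defining condition $C_i\cap\mathrm{St}(C_{i-1})=v_i$ is inherited, and $v_n$ is the vertex of $C_{n-1}$ diagonally opposite $v_{n-1}$. By Proposition~\ref{normal cube path intersection property} this path crosses each hyperplane at most once, so
\[
H(x_0,\tilde{v})=\{\, h : h\pitchfork C_i \text{ for some } 0\le i\le n-1 \,\}.
\]
In particular, if $h\in H(x_0,x)$ and $h\pitchfork C_l$, then $h\in H(x_0,\tilde{v})$ precisely when $l\le n-1$.

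Next, suppose for contradiction that $A\subseteq H(x_0,x)$ is closed under $<$, contains chains of length at most $n$ only, yet $A\not\subseteq H(x_0,\tilde{v})$. Choose $h\in A\setminus H(x_0,\tilde{v})$; it crosses some cube $C_l$ of the normal cube path, and by the previous paragraph $l\ge n$. Apply Proposition~\ref{normal chain} to obtain a chain $h_0<h_1<\cdots<h_{l-1}<h_l=h$ in $H(x_0,x)$ with $h_i\pitchfork C_i$. By transitivity of $<$ we get $h_i<h$ for every $i<l$, and since $A$ is closed under $<$ and $h\in A$, it follows that $h_i\in A$ for all $0\le i\le l$. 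Thus $A$ contains the chain $h_0<\cdots<h_l$, which has $l+1\ge n+1$ elements, i.e.\ a chain of length $>n$ in the sense of Lemma~\ref{char nor dist}. This contradicts the hypothesis on $A$, and the lemma follows.

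The only genuinely delicate point is the bookkeeping in the first step: checking that the truncated sequence really is the normal cube path from $x_0$ to $v_n$ so that Proposition~\ref{normal cube path intersection property} pins down $H(x_0,\tilde{v})$ exactly, and keeping the off-by-one between ``number of cubes'', ``number of vertices'', and ``length of a chain'' consistent with the convention fixed in Lemma~\ref{char nor dist}. Everything else is a formal consequence of Propositions~\ref{normal cube path intersection property} and~\ref{normal chain} together with the closedness of $A$.
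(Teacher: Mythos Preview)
Your proof is correct and in fact more direct than the paper's. The paper argues by induction on $n$: assuming the result for $n-1$ and the $(n-1)$-th vertex $v'$, it takes a maximal chain $h_0<\cdots<h_m$ in $A$, handles the case $m\le n-2$ by applying the inductive hypothesis to the downward closure of $h_m$, and in the case $m=n-1$ uses the inductive step to force $h_i\pitchfork C_i$ for $i\le n-2$ and then invokes Proposition~\ref{normal chain} to rule out $h_{n-1}\pitchfork C_k$ with $k\ge n$. You bypass the induction entirely: once you observe that $H(x_0,\tilde v)$ consists precisely of the hyperplanes crossing $C_0,\dots,C_{n-1}$, a single application of Proposition~\ref{normal chain} to any $h\in A$ crossing $C_l$ with $l\ge n$ immediately produces a chain of length $l+1\ge n+1$ inside $A$ by closedness. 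Both arguments ultimately rest on Proposition~\ref{normal chain}; yours just uses it once, head-on, rather than unwinding it through an induction.
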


\begin{proof}
We proceed by induction on $n$. Suppose that the lemma holds for $n-1$, and let $v'$ be the $(n-1)-$th vertex on the normal cube path from $x_0$ to $x$. Given a closed $A\subseteq H(x_0,x)$ containing chains only with lengths at most $n$, and a maximal chain $h_0<h_1<\cdots <h_m$ in $A$. If $m \leqslant n-2$, then the closed set $\{h\in A: h\leqslant h_m\}$ contains chains only with lengths at most $n-1$; by induction, it is contained in $H(x_0,v')\subseteq H(x_0,\tilde{v})$. Now for $m=n-1$: similarly, $\{h\in A: h\leqslant h_{n-2}\} \subseteq H(x_0,v')$, which implies $h_i \pitchfork C_i$ for $i=0,1,\ldots,n-2$. So $h_{n-1}\pitchfork C_k$ for some $k \geqslant n-1$. If $k \neq n-1$, by Proposition~\ref{normal chain} and the closeness of $A$, we get a chain in $A$ with length greater than $n$, which is a contradiction. So $h_{n-1}\pitchfork C_{n-1}$, i.e. $h_{n-1} \in H(x_0,\tilde{v})$.
\end{proof}

\begin{proof}[Proof of Proposition \ref{char for v}]
By Proposition \ref{def for v}, $\tilde{v} \in [x_0,x] \cap B_{nor}(x_0,n) \subseteq [x_0,v]$, which implies $H(x_0,\tilde{v}) \subseteq H(x_0,v)$. However, $H(x_0,v)$ is closed and contains chains only with lengths at most $n$ according to Lemma \ref{char nor dist}, so $H(x_0,v) \subseteq H(x_0,\tilde{v})$ by Lemma~\ref{max}, which implies $H(x_0,v)=H(x_0,\tilde{v})$. So $H(v,\tilde{v})=H(x_0,v) \triangle H(x_0,\tilde{v}) = \emptyset$, which implies $v=\tilde{v}$.
\end{proof}

Finally, we characterize those points in  $[x_0,x]$ which lie in the intersection $[x_0,x] \cap S_{nor}(x_0,n)$. This will be used in the next subsection to decompose $[x_0,x] \cap S_{nor}(x_0,n)$ into a union of intervals.

Let $C_{n-1}$ be the $n$-th cube on the normal cube path from $x_0$ to $x$, and $v=\tilde{v}$ is the $n$-th vertex on the cube path as above. Let $H_n$ be the set of all hyperplanes intersecting with $C_{n-1}$.

\begin{prop}\label{dec prel}
For $w\in [x_0,x]$, the following are equivalent:
\begin{enumerate}[1)]
  \item $w \in [x_0,x] \cap S_{nor}(x_0,n)$;
  \item $\exists h\in H_n$, such that $h$ crosses the last cube on the normal cube path from $x_0$ to $w$;
  \item $\exists h \in H_n$, such that $h$ separates $w$ from $x_0$ and $w\in[x_0,v]$.
\end{enumerate}
\end{prop}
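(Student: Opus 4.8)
The plan is to prove the cyclic chain of implications $1) \Rightarrow 3) \Rightarrow 2) \Rightarrow 1)$, using the characterization of $d_{nor}$ in terms of chains of hyperplanes (Lemma \ref{char nor dist}), together with the description of $v$ as the $n$-th vertex on the normal cube path from $x_0$ to $x$ (Proposition \ref{char for v}) and the construction of normal cube paths recalled after Proposition \ref{fellow-traveller}. The key observation to keep in mind throughout is that, since $w \in [x_0,x]$, every hyperplane in $H(x_0,w)$ lies in $H(x_0,x)$ and the partial order $\leqslant$ on $H(x_0,w)$ is the restriction of the one on $H(x_0,x)$; so chains in $H(x_0,w)$ are exactly the chains in $H(x_0,x)$ consisting of hyperplanes separating $w$ from $x_0$.

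For $1) \Rightarrow 3)$: assume $d_{nor}(x_0,w) = n$. Since $w \in [x_0,x]$ and $[x_0,v] \supseteq [x_0,x]\cap B_{nor}(x_0,n)$ by Proposition \ref{def for v}, and $w$ lies in this intersection, we get $w \in [x_0,v]$. It remains to produce $h \in H_n$ separating $w$ from $x_0$. By Lemma \ref{char nor dist} there is a chain $h_0 < h_1 < \cdots < h_{n-1}$ in $H(x_0,w)$ of length $n$. Using Proposition \ref{normal chain} applied to the normal cube path from $x_0$ to $x$ — more precisely, the fact established inside the proof of Lemma \ref{max} that a closed set of chains of length $\leqslant n$ forces $h_i \pitchfork C_i$ — one sees that the top element of a maximal such chain must cross $C_{n-1}$, i.e. lies in $H_n$. (Here I would reuse the inductive argument of Lemma \ref{max}: the chain $h_0 < \cdots < h_{n-1}$ inside $H(x_0,w) \subseteq H(x_0,x)$ has $h_i \pitchfork C_i$, and in particular $h_{n-1} \pitchfork C_{n-1}$, so $h_{n-1} \in H_n$ separates $w$ from $x_0$.) This gives 3).

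For $3) \Rightarrow 2)$: suppose $h \in H_n$ separates $w$ from $x_0$ and $w \in [x_0,v]$. I want $h$ to cross the last cube on the normal cube path from $x_0$ to $w$. By the construction of the normal cube path from $x_0$ to $w$, the last cube is crossed exactly by the hyperplanes in $H(x_0,w)$ that are maximal with respect to $\leqslant$ (equivalently, adjacent to $w$ on the side towards $w$); so it suffices to show $h$ is maximal in $H(x_0,w)$. If not, there is $k \in H(x_0,w)$ with $h < k$; but then $k$ separates $w$ from $x_0$, hence (as $w \in [x_0,v]$, so $H(x_0,w) \subseteq H(x_0,v)$) $k \in H(x_0,v)$, and combined with $h \pitchfork C_{n-1}$ and $h < k$ this forces, via Lemma \ref{char nor dist} and Proposition \ref{normal chain}, a chain of length $\geqslant n+1$ in $H(x_0,v)$, contradicting $d_{nor}(x_0,v) = n$. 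Hence $h$ is maximal in $H(x_0,w)$ and crosses the last cube of that normal cube path.

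For $2) \Rightarrow 1)$: if $h \in H_n$ crosses the last cube on the normal cube path from $x_0$ to $w$, then $h \pitchfork C_{n-1}$, and by Proposition \ref{normal chain} (applied to the normal cube path from $x_0$ to $x$) there is a chain $h_0 < \cdots < h_{n-1} = h$ with $h_i \pitchfork C_i$; all these $h_i$ separate $w$ from $x_0$ because $h$ is below the last cube of the normal cube path to $w$ and the $h_i$ lie below $h$ (using Lemma \ref{normal contain} and that $w \in [x_0,x]$). So $d_{nor}(x_0,w) \geqslant n$ by Lemma \ref{char nor dist}. For the reverse inequality, note $w$ lies on a geodesic from $x_0$ to $x$ through a vertex of $C_{n-1}$-side, and any chain in $H(x_0,w) \subseteq H(x_0,x)$ of length $m$ would, again by the argument of Lemma \ref{max}, force its elements to cross $C_0,\dots,C_{m-1}$, so $m \leqslant n$ once we check $w$ cannot see past $C_{n-1}$; this follows since $h \in H_n$ separating $w$ from $x_0$ forces $w$ into the halfspace $h^-$-complement side while all $C_i$ with $i \geqslant n$ lie beyond $h$. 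Hence $d_{nor}(x_0,w) = n$.

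The main obstacle I expect is $3) \Rightarrow 2)$ (equivalently the maximality argument): one must carefully transfer chains between $H(x_0,w)$, $H(x_0,v)$ and $H(x_0,x)$ and invoke Proposition \ref{normal chain} in the right ambient normal cube path, making sure the hyperplane crossing $C_{n-1}$ really is maximal in $H(x_0,w)$ rather than merely present in it. The bookkeeping of \emph{which} normal cube path (that to $x$, to $v$, or to $w$) each chain lives in is where the argument is most delicate; everything else is a direct application of Lemmas \ref{char nor dist}, \ref{normal contain} and Proposition \ref{normal chain}.
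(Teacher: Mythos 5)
Your overall plan (the cyclic implications $1\Rightarrow 3\Rightarrow 2\Rightarrow 1$, driven by the chain characterization of $d_{nor}$) is the same as the paper's, and your $1\Rightarrow 3$ is essentially the paper's argument. But your $3\Rightarrow 2$ contains a genuine gap. You assert that ``the last cube is crossed exactly by the hyperplanes in $H(x_0,w)$ that are maximal with respect to $\leqslant$'' and then reduce to proving maximality of $h$. Maximality is necessary but \emph{not} sufficient: take $X=\mathbb{Z}^2$, $x_0=(0,0)$, $w=(3,1)$. The hyperplane $y=\tfrac12$ is maximal in $H(x_0,w)$ (it crosses all three $x$-hyperplanes, so is comparable to none of them), yet it crosses only the first cube $[0,1]^2$ of the normal cube path, while the last cube $[2,3]\times\{1\}$ is crossed by $x=\tfrac52$ alone. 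The cube a hyperplane lands in is governed by the maximal length of a chain \emph{beneath} it — that is Lemma~\ref{char for H_n} — not by maximality. So your contradiction argument, which correctly establishes that $h$ is maximal in $H(x_0,w)$, does not yield the stated conclusion that $h$ crosses the last cube. The paper argues instead: $h$ crosses \emph{some} cube of the normal cube path from $x_0$ to $w$; the chain $h_0<\cdots<h_{n-1}=h$ supplied by Proposition~\ref{normal chain} lies in $H(x_0,w)$ and rules out the first $n-1$ cubes; and if $h$ crossed a later cube then $d_{nor}(x_0,w)>n$, which via $H(x_0,w)\subseteq H(x_0,v)$ would contradict $d_{nor}(x_0,v)=n$. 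Your proof is repairable (maximality, together with $d_{nor}(x_0,w)\leqslant n$ and the length-$n$ chain below $h$, does force $h$ onto the last cube by Lemma~\ref{char for H_n}), but the reasoning as written is a non sequitur.

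Your $2\Rightarrow 1$ also leans on two unjustified claims: that ``any chain in $H(x_0,w)\subseteq H(x_0,x)$ of length $m$ would force its elements to cross $C_0,\dots,C_{m-1}$'' (false — a chain need not begin with a hyperplane adjacent to $x_0$), and that ``all $C_i$ with $i\geqslant n$ lie beyond $h$'' (unsupported). The direct route, which the paper describes as immediate from Lemma~\ref{char nor dist}, is: the chain of length $n$ below $h$ lies in $H(x_0,w)$ (every $h_i<h$ separates $w$ from $x_0$ since $h$ does), so $d_{nor}(x_0,w)\geqslant n$; and since $h$ crosses the last cube of the normal cube path to $w$, Lemma~\ref{char for H_n} gives $d_{nor}(x_0,w)$ equals the maximum chain length below $h$ in $H(x_0,w)$, which is at most the maximum chain length below $h$ in $H(x_0,x)$, namely $n$.
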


\begin{proof}
~\\[-0.5cm]
\begin{itemize}
  \item $1)\Rightarrow 3)$: By Proposition \ref{def for v}, $w\in [x_0,v]$. Since $d_{nor}(x_0,w)=n$, by Lemma~\ref{char nor dist}, the maximum length of chains in $H(x_0,w)$ is $n$. Take such a chain $h_0<h_1<\cdots <h_{n-1}$ in $H(x_0,w) \subseteq H(x_0,v)$. Obviously, $h_i$ intersects with different cubes, which implies $h_i \pitchfork C_i$. So $h_{n-1} \in H_n$, and it separates $w$ from $x_0$.
  \item $3)\Rightarrow 2)$: Since $h$ separates $w$ from $x_0$, $h$ must cross some cube $C$ on the normal cube path from $x_0$ to $w$. Since $h\in H_n$, we know there is a chain $h_0<h_1<\cdots<h_{n-1}=h$ in $H(x_0,v)$, which is also a chain in $H(x_0,w)$. So $h$ cannot cross the first $n-1$ cubes of the normal cube path from $x_0$ to $w$. If $h$ does not cross the last cube, then $d_{nor}(x_0,w)>n$. However, $w\in [x_0,v]$ implies $H(x_0,w) \subseteq H(x_0,v)$, by Lemma \ref{char nor dist}, $d_{nor}(x_0,v) >n$, which is a contradiction.
  \item $2)\Rightarrow 1)$: This is immediate, by Lemma \ref{char nor dist}.
\end{itemize}
\end{proof}

We have another description for $H_n$, which is implied by Proposition~\ref{normal chain} directly.
\begin{lem}\label{char for H_n}
For $h\in H(x_0,x)$, $h\in H_n$ if and only if the maximal length of chains in $\{k\in H(x_0,x): k\leqslant h\}$ is $n$.
\end{lem}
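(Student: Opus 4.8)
The plan is to unpack both directions from the two characterizations of $H_n$ already established: the definition of $H_n$ as the set of hyperplanes crossing the $n$-th cube $C_{n-1}$ on the normal cube path from $x_0$ to $x$, together with the chain description in Proposition~\ref{normal chain} and Lemma~\ref{normal contain}. Recall that by Lemma~\ref{normal contain}, within $H(x_0,x)$ the relation $\leqslant$ is exactly the relation ``does not intersect'', so a chain $h_0 < h_1 < \cdots < h_m$ in $H(x_0,x)$ is a totally ordered (under non-crossing) family. The key bookkeeping fact, already used repeatedly in the proof of Lemma~\ref{char nor dist}, is that two distinct hyperplanes crossing the \emph{same} cube must cross each other; hence each hyperplane in a chain crosses a distinct cube of the normal cube path, and a chain of length $m+1$ uses cubes with strictly increasing indices $k(0) < k(1) < \cdots < k(m)$.

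For the ``only if'' direction, suppose $h \in H_n$, i.e. $h \pitchfork C_{n-1}$. By Proposition~\ref{normal chain} there is a chain $h_0 < h_1 < \cdots < h_{n-1} = h$ with $h_i \pitchfork C_i$; every $h_i$ satisfies $h_i \leqslant h$, so this is a chain of length $n$ inside $\{k \in H(x_0,x): k \leqslant h\}$. Conversely, any chain $k_0 < \cdots < k_m = h$ in that set has $k_i \pitchfork C_{k(i)}$ with $k(0) < \cdots < k(m) = n-1$, so $m \leqslant n-1$, i.e. the chain has length at most $n$. Hence the maximal chain length in $\{k \leqslant h\}$ is exactly $n$.

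For the ``if'' direction, suppose the maximal chain length in $\{k \in H(x_0,x): k \leqslant h\}$ is $n$; say $h \pitchfork C_l$, so we must show $l = n-1$. Applying Proposition~\ref{normal chain} to $h \pitchfork C_l$ gives a chain $h_0 < \cdots < h_{l-1} < h_l = h$ with $h_i \pitchfork C_i$; this lies in $\{k \leqslant h\}$ and has length $l+1$, forcing $l + 1 \leqslant n$, i.e. $l \leqslant n-1$. For the reverse inequality, take a maximal chain $k_0 < \cdots < k_{n-1} = h$ realizing length $n$; as above the indices $k(i)$ of the cubes crossed are strictly increasing with $k(n-1) = l$, so $l \geqslant n-1$. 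Therefore $l = n-1$ and $h \in H_n$.

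The only mild subtlety — and the step I would be most careful about — is the claim that the cube indices along a chain are strictly increasing with the chain, which rests on combining Lemma~\ref{normal contain} (non-crossing $\iff$ comparable in $H(x_0,x)$) with the fact that $h_i \leqslant h_{i+1}$, $h_i \neq h_{i+1}$ cannot cross a common cube; this is exactly the argument in the first paragraph of the proof of Lemma~\ref{char nor dist} and can simply be cited. Everything else is a direct application of Proposition~\ref{normal chain}.
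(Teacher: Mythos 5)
Your proof is correct and fills in exactly the details the paper omits; the paper's entire ``proof'' is the remark that the lemma ``is implied by Proposition~\ref{normal chain} directly,'' and your two-directional unpacking is the intended argument. The one place I would tighten is the justification of the monotonicity claim. You cite the first paragraph of the proof of Lemma~\ref{char nor dist}, but that passage only establishes that the cube indices $k(i)$ along a chain are \emph{distinct} (two hyperplanes crossing a common cube must cross), which does not by itself give strict increase, and strict increase is genuinely needed: in the ``only if'' direction you need it to conclude that all $k(i)$ are $\leqslant n-1$, and in the ``if'' direction you need it to conclude $l = k(n-1) \geqslant n-1$. The correct reference is the \emph{proof} of Lemma~\ref{normal contain}: there it is shown that if $h \pitchfork C_i$, $k \pitchfork C_j$, $h \neq k$ are non-crossing and $i < j$, then $h^- \subseteq k^-$, i.e. $h < k$. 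Combined with distinctness of cube indices, this gives precisely that $h < k$ if and only if $h$'s cube index is strictly smaller than $k$'s, which is the monotonicity you use. With that citation corrected, the argument is complete.
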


\subsection{Decomposition of $[x_0,x] \cap S_{nor}(x_0,n)$}
We want to decompose the set $[x_0,x] \cap S_{nor}(x_0,n)$ so that we can proceed by the induction on dimension in the proof of Theorem~\ref{thm a}.

Throughout this subsection, we fix $x\in V$ and $n\in \mathbb{N}$ with $d_{nor}(x_0,x) \geqslant n$, and let $v$ be as defined in Proposition \ref{def for v}. At the end of the preceding subsection, we have defined $H_n$ to be the set of all hyperplanes intersecting with $C_{n-1}$, where $\{C_i\}$ is the normal cube path from $x_0$ to $x$.

Now we decompose $[x_0,x] \cap S_{nor}(x_0,n)$ into a union of intervals with dimensions lower than $[x_0,x]$, and the number of these intervals can be controlled by the dimension of $[x_0,x]$. This will make it possible to do induction on the dimension.

\begin{defn}
For $h\in H_n$,  we define
$$F_h=\{w\in [x_0,x] \cap S_{nor}(x_0,n): h \mbox{~separates~} w \mbox{~from~} x_0\}.$$
\end{defn}

By Proposition \ref{dec prel}, we immediately obtain the following two lemmas.
\begin{lem}\label{def for Fh}
\begin{eqnarray*}
        F_h & = & \{w\in [x_0,x]: h \mbox{~crosses~the~last~cube~on~the~normal~cube~path~from~}x_0 \mbox{~to~}w\} \\
            & = & \{w\in[x_0,v]: h \mbox{~separates~} w \mbox{~from~} x_0\}.
\end{eqnarray*}
\end{lem}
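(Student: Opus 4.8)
The plan is to read off both equalities directly from the three-way equivalence of Proposition~\ref{dec prel}. The only point needing care is that $F_h$ is defined using \emph{one fixed} hyperplane $h\in H_n$, whereas conditions 2) and 3) of Proposition~\ref{dec prel} carry an existential quantifier over $H_n$; so I will check that each implication invoked below goes through with $h$ held fixed, which is the case since each of the relevant implications in the proof of that proposition is already carried out for a single hyperplane.

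For the first equality I would argue as follows. If $w\in F_h$, then $w\in[x_0,x]\cap S_{nor}(x_0,n)$ and $h$ separates $w$ from $x_0$, and Proposition~\ref{def for v} gives $w\in[x_0,v]$ as well, so the pair $(w,h)$ satisfies hypothesis 3) of Proposition~\ref{dec prel}. The argument there for $3)\Rightarrow 2)$ uses only this $h$: it produces a chain $h_0<\cdots<h_{n-1}=h$ in $H(x_0,v)$ which is automatically a chain in $H(x_0,w)$, since $h$, and hence every $h_i\leqslant h$, separates $w$ from $x_0$ by the nesting of halfspaces (Lemma~\ref{normal contain}); it therefore yields that $h$ crosses the last cube of the normal cube path from $x_0$ to $w$. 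Conversely, if $w\in[x_0,x]$ and $h$ crosses the last cube of the normal cube path from $x_0$ to $w$, then $h$ separates $w$ from $x_0$ (the hyperplanes crossing that cube separate its two diagonal vertices, and a normal cube path meets $h$ at most once by Proposition~\ref{normal cube path intersection property}), and the implication $2)\Rightarrow 1)$ of Proposition~\ref{dec prel}---which, through Lemma~\ref{char nor dist}, Lemma~\ref{char for H_n} and Proposition~\ref{normal chain}, again involves only this $h$---gives $d_{nor}(x_0,w)=n$; hence $w\in F_h$.

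For the second equality, the inclusion of $F_h$ into the set of $w\in[x_0,v]$ for which $h$ separates $w$ from $x_0$ is the observation already made: $w\in F_h$ forces $w\in[x_0,v]$ by Proposition~\ref{def for v}, and $h$ separates $w$ from $x_0$ by definition of $F_h$. Conversely, if $w\in[x_0,v]$ and $h$ separates $w$ from $x_0$, then $w\in[x_0,x]$ because $v\in[x_0,x]$ and intervals are nested, so $(w,h)$ satisfies 3) and the implication $3)\Rightarrow 1)$ of Proposition~\ref{dec prel} gives $w\in S_{nor}(x_0,n)$, i.e.\ $w\in F_h$.

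I do not expect a genuine obstacle; the proof is essentially bookkeeping with Proposition~\ref{dec prel}. The one thing to be vigilant about is exactly the quantifier issue above, namely confirming that the implications $2)\Rightarrow 1)$, $3)\Rightarrow 2)$ and $3)\Rightarrow 1)$ are being applied for the single fixed hyperplane $h\in H_n$ occurring in the definition of $F_h$, rather than for some unspecified element of $H_n$.
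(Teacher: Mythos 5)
Your proof is correct and follows the paper's route exactly: the paper simply asserts that the lemma is immediate from Proposition~\ref{dec prel}, and your argument consists of checking that the relevant implications there ($3)\Rightarrow 2)$, $2)\Rightarrow 1)$, $3)\Rightarrow 1)$) in fact hold with the hyperplane $h$ held fixed. Your attention to the existential quantifier is the right thing to be careful about, and the verifications you give are sound.
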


\begin{lem}\label{dec}
$[x_0,x] \cap S_{nor}(x_0,n)=\bigcup_{h\in H_n}F_h$.
\end{lem}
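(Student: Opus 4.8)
The plan is to prove the two inclusions $[x_0,x]\cap S_{nor}(x_0,n)\subseteq\bigcup_{h\in H_n}F_h$ and $\bigcup_{h\in H_n}F_h\subseteq[x_0,x]\cap S_{nor}(x_0,n)$ separately, and both will follow almost directly from Proposition \ref{dec prel} together with the definition of $F_h$. For the forward inclusion, take $w\in[x_0,x]\cap S_{nor}(x_0,n)$. By the implication $1)\Rightarrow 3)$ of Proposition \ref{dec prel}, there exists $h\in H_n$ such that $h$ separates $w$ from $x_0$ (and moreover $w\in[x_0,v]$, though we only need the separation). By the definition of $F_h$, this says exactly that $w\in F_h$, so $w\in\bigcup_{h\in H_n}F_h$.

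For the reverse inclusion, suppose $w\in F_h$ for some $h\in H_n$. By definition $F_h\subseteq[x_0,x]\cap S_{nor}(x_0,n)$, so there is in fact nothing to prove; but to make the point transparently one can instead note that membership $w\in F_h$ means $w\in[x_0,x]\cap S_{nor}(x_0,n)$ with $h$ separating $w$ from $x_0$, and in particular $w\in[x_0,x]\cap S_{nor}(x_0,n)$. (Alternatively, invoking $3)\Rightarrow 1)$ of Proposition \ref{dec prel} via the intermediate condition $2)$ gives the same conclusion starting only from the weaker description of $F_h$ in Lemma \ref{def for Fh}.) Either way the reverse inclusion is immediate.

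There is essentially no obstacle here: the content of the lemma has been fully front-loaded into Proposition \ref{dec prel} and Lemma \ref{def for Fh}, and the statement is a bookkeeping consequence of the fact that $S_{nor}(x_0,n)$ is detected by the existence of a length-$n$ chain topped by a hyperplane crossing $C_{n-1}$. The only thing to be a little careful about is to make sure the union ranges over the correct index set $H_n$ — i.e. that every $w$ on the normal sphere is separated from $x_0$ by \emph{some} hyperplane of $H_n$, which is precisely what $1)\Rightarrow 3)$ supplies — and that no spurious points are introduced, which is guaranteed since each $F_h$ is defined as a subset of $[x_0,x]\cap S_{nor}(x_0,n)$ to begin with.
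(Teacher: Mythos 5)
Your proof is correct and follows exactly the route the paper intends: the paper simply remarks that Lemma~\ref{dec} follows immediately from Proposition~\ref{dec prel}, and your unpacking (forward inclusion via $1)\Rightarrow 3)$, reverse inclusion trivial since $F_h$ is by definition a subset of $[x_0,x]\cap S_{nor}(x_0,n)$) is precisely what that remark elides.
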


By definition, we know
$$F_h = [x_0,x] \cap B_{nor}(x_0,n) \cap \{v':h \mbox{~separates~} v' \mbox{~from~} x_0\},$$
which implies that $F_h$ is convex. Moreover, we will show that $F_h$ is actually an interval.

\begin{lem}\label{label}
Let $x_h\in F_h$ be the  point minimising $d(x_0,x_h)$. Then $F_h=[x_h,v]$.
\end{lem}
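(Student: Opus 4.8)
The plan is to prove the two inclusions $[x_h,v]\subseteq F_h$ and $F_h\subseteq[x_h,v]$ separately, using the convexity of $F_h$ already noted (it is an intersection of the convex set $[x_0,x]\cap B_{nor}(x_0,n)$ with a halfspace) together with the characterization $F_h=\{w\in[x_0,v]: h\text{ separates }w\text{ from }x_0\}$ from Lemma~\ref{def for Fh}. For the inclusion $[x_h,v]\subseteq F_h$: since $x_h\in F_h$ and $v\in F_h$ (indeed $v\in[x_0,v]$ trivially, and $h\in H_n$ separates $v$ from $x_0$ because $h\pitchfork C_{n-1}$ and $C_{n-1}$ lies on the normal cube path from $x_0$ to $v=\tilde v$), convexity of $F_h$ gives $[x_h,v]\subseteq F_h$ at once.

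For the reverse inclusion $F_h\subseteq[x_h,v]$, I would take an arbitrary $w\in F_h$ and show $w\in[x_h,v]$, i.e. that $w$ lies on some edge-geodesic from $x_h$ to $v$. Equivalently, using the hyperplane description of intervals from Section~2.3, I must check that every hyperplane separating $x_h$ from $v$ on the appropriate side, and every hyperplane separating $v$ from $x_h$, is correctly positioned relative to $w$; concretely, $w\in[x_h,v]$ iff no hyperplane separates $w$ from both $x_h$ and $v$. Since $w,x_h,v$ all lie in the convex set $F_h\subseteq[x_0,v]$, and $[x_0,v]$ is itself convex with $v$ its ``far'' point (Proposition~\ref{def for v}), the hyperplanes separating $w$ from $v$ are exactly those in $H(x_0,v)\setminus H(x_0,w)$, while those separating $w$ from $x_h$ must be analysed via the defining minimality of $x_h$. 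The key point is that $x_h$ minimises $d(x_0,\cdot)$ over $F_h$: I would argue that if some hyperplane $k$ separated $w$ from both $x_h$ and $v$, then the median $\mu(x_h,w,x_0)$ (or a suitably chosen median among $x_0,x_h,w,v$) would again lie in $F_h$ — this uses that $F_h$ is convex and that $h$ still separates this median from $x_0$, which follows since $h$ separates all three of $x_h,w$ (hence their median with anything on the $x_0$-side stays consistent) — yet would be strictly closer to $x_0$ than $x_h$, contradicting the minimality. Filling in exactly which median to take, and verifying $h$ separates it from $x_0$, is where the bookkeeping lies.

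Alternatively, and perhaps more cleanly, I would invoke Lemma~\ref{basic median} in the same spirit as the proof of Proposition~\ref{def for v}: for $w\in F_h$ set $m=\mu(x_h,w,v)$; then $m\in[x_h,w]\cap[w,v]\cap[x_h,v]$ and $m\in F_h$ by convexity, so $d(x_0,m)\geqslant d(x_0,x_h)$ by minimality of $x_h$; combined with $m\in[x_h, \cdot]$ and a comparison of distances to $x_0$ this should force $m=x_h$, giving $x_h\in[w,v]$, and then one more application of Lemma~\ref{basic median} (with the four points $x_h, v, w$ and the fact that $x_h,w\in[x_0,v]$) yields $w\in[x_h,v]$ as desired.

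The main obstacle I anticipate is the verification that the relevant median stays inside $F_h$ — specifically that $h$ separates it from $x_0$. Convexity of $F_h$ handles $[x_0,x]\cap B_{nor}(x_0,n)$, but one must separately confirm the halfspace condition: since $h$ separates each of $x_h, w, v$ from $x_0$, any median of three points all lying in the halfspace $h^+$ again lies in $h^+$ (halfspaces are convex, Section~2.3), so this is in fact automatic once phrased correctly. The remaining care is purely in the distance-to-$x_0$ comparisons that convert ``$m\in F_h$ and $m\in[x_h,\cdot]$'' into ``$m=x_h$'', exactly as in Proposition~\ref{def for v}; this is routine given Lemma~\ref{basic median}.
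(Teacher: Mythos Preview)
Your first inclusion $[x_h,v]\subseteq F_h$ is fine and matches the paper. For the reverse inclusion, the paper does exactly what you mention in passing in your first approach: for $w\in F_h$ it takes $m=\mu(x_0,w,x_h)$, notes $m\in[w,x_h]\subseteq F_h$ by convexity and $m\in[x_0,x_h]$ so $d(x_0,m)\leqslant d(x_0,x_h)$, whence $m=x_h$ by minimality; thus $x_h\in[x_0,w]$, and since $x_h,w\in[x_0,v]$ Lemma~\ref{basic median} gives $w\in[x_h,v]$. So the correct median involves $x_0$, and you already had it.

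Your ``cleaner'' alternative with $m=\mu(x_h,w,v)$ does not work, and this is a genuine gap. Minimality of $x_h$ gives only $d(x_0,m)\geqslant d(x_0,x_h)$, while to force $m=x_h$ you would need the opposite inequality $d(x_0,m)\leqslant d(x_0,x_h)$; nothing about $m\in[x_h,w]$ or $m\in[x_h,v]$ provides this (indeed for $w=v$ one gets $m=v\neq x_h$). Moreover, even if you somehow obtained $x_h\in[w,v]$, your proposed application of Lemma~\ref{basic median} would not yield $w\in[x_h,v]$: that lemma needs $x_h\in[x_0,w]$, not $x_h\in[w,v]$, as its hypothesis. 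Drop the second median and run the argument with $\mu(x_0,w,x_h)$ directly; the halfspace verification you worried about is then unnecessary, since $m\in F_h$ follows from convexity of $F_h$ alone via $m\in[w,x_h]$.
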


\begin{proof}
Since $F_h$ is convex and $x_h, v\in F_h$, so $[x_h,v] \subseteq F_h$. On the other hand, $\forall z\in F_h$, let $m=\mu(x_0,z,x_h)$. So, $m\in F_h$ and $d(x_0,m) \leqslant d(x_0,x_h)$. By the choice of $x_h$, we know that $d(x_0,m)=d(x_0,x_h)$, which implies $m=x_h$, so $x_h \in [x_0,z]$. By Proposition \ref{def for v}, $x_h,z\in [x_0,v]$. Thus, by Lemma \ref{basic median}, $z\in [x_h,v]$.
\end{proof}

\begin{prop}\label{dec final}
$[x_0,x] \cap S_{nor}(x_0,n)=\bigcup_{h\in H_n}[x_h,v]$, and $\dim[x_h,v] < \dim[x_0,x]$.
\end{prop}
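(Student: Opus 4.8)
The equality $[x_0,x]\cap S_{nor}(x_0,n)=\bigcup_{h\in H_n}[x_h,v]$ is already in hand: Lemma~\ref{dec} gives the decomposition into the sets $F_h$, and Lemma~\ref{label} identifies each $F_h$ with the interval $[x_h,v]$. So the plan is simply to chain those two lemmas together for the first assertion, and then concentrate all the work on the dimension bound $\dim[x_h,v]<\dim[x_0,x]$.

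For the dimension bound, recall that the dimension of an interval $[a,b]$ in a CAT(0) cube complex equals the maximal number of pairwise crossing hyperplanes in $H(a,b)$ (equivalently, the largest cube crossed by hyperplanes all separating $a$ from $b$). So it suffices to produce, for each $h\in H_n$, one hyperplane that crosses $[x_0,x]$ but does \emph{not} belong to $H(x_h,v)$ and is compatible with being added to any pairwise-crossing family inside $H(x_h,v)$ — i.e. to show $H(x_h,v)$ supports a strictly smaller pairwise-crossing subfamily than $H(x_0,x)$. The natural candidate is $h$ itself: by definition $h\in H(x_0,x)$, and since $v\in[x_0,v]$ with $h$ separating every point of $F_h$ — in particular $x_h$ and $v$ — from $x_0$, the hyperplane $h$ does \emph{not} separate $x_h$ from $v$, so $h\notin H(x_h,v)$. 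The remaining point is to argue that $h$ does cross a maximal cube witnessing $\dim[x_0,x]$: by Lemma~\ref{char for H_n}, $h\in H_n$ means the maximal chain in $\{k\in H(x_0,x):k\leqslant h\}$ has length $n$, hence $h\pitchfork C_{n-1}$, and I would take a top-dimensional cube $C$ of $[x_0,x]$ and compare. Concretely: let $m=\dim[x_0,x]$ and pick $m$ pairwise crossing hyperplanes $k_1,\dots,k_m\in H(x_0,x)$. I claim $H(x_h,v)\subseteq H(x_0,x)\setminus\{h'\}$ for some hyperplane $h'$ in the pairwise-crossing family, which forces $\dim[x_h,v]\leqslant m-1$.

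The cleanest route to the last claim: every hyperplane in $H(x_h,v)$ lies in $H(x_0,x)$ (since $x_h,v\in[x_0,x]$ and intervals are intersections of halfspaces), and none of them is $h$ (shown above); moreover $h$ \emph{crosses} the cube dual to any maximal pairwise-crossing family through $C_{n-1}$, so if we choose the maximal family $k_1,\dots,k_m$ to contain $h$ — which we can do precisely when $h$ lies in some top-dimensional cube of $[x_0,x]$ — we are done. The subtlety is whether $h$ really lies in a top-dimensional cube of $[x_0,x]$, and that is the step I expect to be the main obstacle. If it fails, one argues instead locally at $C_{n-1}$: restrict attention to hyperplanes crossing $C_{n-1}$; those crossing $F_h=[x_h,v]$ are exactly the ones in $H_n$ that are "below or incomparable to $h$ but not separating $x_0$ from $x_h$", and a counting/poset argument via Lemma~\ref{char for H_n} and Proposition~\ref{normal chain} shows the pairwise-crossing hyperplanes of $H(x_h,v)$ meeting $C_{n-1}$ form a proper subset of those of $H(x_0,x)$ meeting $C_{n-1}$, namely they miss $h$. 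Since crossing families in a CAT(0) cube complex are "localised" in a single cube, losing $h$ from the family that realises the dimension drops the dimension by at least one.

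I would therefore organise the write-up as: (1) quote Lemmas~\ref{dec} and~\ref{label} for the set equality; (2) fix $h\in H_n$, set $m=\dim[x_0,x]$, and record $h\in H(x_0,x)$, $h\notin H(x_h,v)$, $H(x_h,v)\subseteq H(x_0,x)$; (3) take a maximal pairwise-crossing family in $H(x_h,v)$, note it consists of hyperplanes crossing $[x_0,x]$ none of which is $h$, and observe that $h$ crosses a common cube with the cube dual to that family (using $h\pitchfork C_{n-1}$ and that all these hyperplanes separating $x_h$ from $v$ also cross near $C_{n-1}$), so the family together with $h$ is still pairwise crossing in $H(x_0,x)$, giving $\dim[x_h,v]+1\leqslant m$. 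The delicate part, as noted, is step (3)'s claim that $h$ can be adjoined to the crossing family — i.e. that $h$ crosses each hyperplane of $H(x_h,v)$ in the maximal family; I would prove this by showing such a hyperplane $k$ satisfies neither $k\leqslant h$ nor $h\leqslant k$ in $H(x_0,x)$ (if $k\leqslant h$ then $k$ separates $x_0$ from $x_h$, contradicting $k\in H(x_h,v)$; if $h\leqslant k$ then $h$ separates $x_0$ from $v\supseteq$ every point below, again contradicting $h\notin H(x_h,v)$ once one tracks which side $v$ is on), and then invoking Lemma~\ref{normal contain} to conclude $h\pitchfork k$.
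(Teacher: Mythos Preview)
Your plan is essentially the paper's: the set equality comes directly from Lemmas~\ref{dec} and~\ref{label}, and for the dimension drop one shows that every hyperplane $k\in H(x_h,v)$ crosses $h$, so that $h$ can be adjoined to any maximal pairwise-crossing family in $H(x_h,v)$ to give a strictly larger one in $H(x_0,x)$. The paper's proof is exactly this two-line observation (``for any hyperplane $k$ crossing $[x_h,v]$, $k\pitchfork h$; hence $\dim[x_h,v]<\dim[x_0,x]$'').

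Your write-up, however, takes unnecessary detours (whether $h$ sits in a top-dimensional cube of $[x_0,x]$ is irrelevant once you go the $k\pitchfork h$ route), and the case $h\leqslant k$ is not argued correctly. Your sentence ``then $h$ separates $x_0$ from $v\supseteq$ every point below, again contradicting $h\notin H(x_h,v)$'' does not produce a contradiction: that $h$ separates $x_0$ from $v$ is simply true and says nothing about membership in $H(x_h,v)$. The clean fix is: since $x_h\in[x_0,v]$ (Proposition~\ref{def for v}) and $k\in H(x_h,v)$, we have $k\in H(x_0,v)$; but $h\in H_n$ furnishes a chain of length $n$ ending at $h$ (Proposition~\ref{normal chain} or Lemma~\ref{char for H_n}), so $h<k$ would give a chain of length $n+1$ in $H(x_0,v)$, contradicting $d_{nor}(x_0,v)=n$ via Lemma~\ref{char nor dist}. (Your $k\leqslant h$ case is fine in spirit, though the point is not that $k$ separates $x_0$ from $x_h$, but that both $x_h$ and $v$ lie in $h^+\subseteq k^+$, so $k\notin H(x_h,v)$.) With that correction and the digressions dropped, your argument matches the paper's.
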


\begin{proof}
We only need to show $\dim[x_h,v] < \dim[x_0,x]$. For any hyperplane $k$ crossing $[x_h,v]$, by Proposition \ref{dec prel}, $k\pitchfork h$. So $\dim[x_h,v]< \dim[x_0,x]$.
\end{proof}

Now we give another characterization for $x_h$, which is useful in the proof of the consistency condition of Theorem~\ref{thm a}.

\begin{lem}\label{char for x_h}
Let $x_h$ be the closest point to $x_0$ on $F_h$, then $x_h$ is the unique point in $B_{nor}(x_0,n)$ such that $h$ separates $x_0$ from $x_h$, and for any hyperplane $k \pitchfork h$, $k$ does not separate $x_h$ from $x_0$.
\end{lem}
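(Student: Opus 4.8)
The plan is to characterize $x_h$ entirely in terms of hyperplanes separating it from $x_0$, using Lemma \ref{char nor dist} (the chain characterization of $d_{nor}$) together with Proposition \ref{normal chain}. First I would record what $H(x_0,x_h)$ looks like. Since $x_h \in F_h \subseteq [x_0,v]$ we have $H(x_0,x_h) \subseteq H(x_0,v)$, and since $h$ separates $x_h$ from $x_0$ we have $h \in H(x_0,x_h)$; by Lemma \ref{char for H_n}, $h \in H_n$ means the maximal chain below $h$ in $H(x_0,x)$ has length $n$, so by Lemma \ref{char nor dist} applied to $x_h$ we get $d_{nor}(x_0,x_h) \geqslant n$, hence $=n$ as $x_h \in B_{nor}(x_0,n)$. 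The key claim is then: $H(x_0,x_h) = \{k \in H(x_0,x) : k \leqslant h\}$. The inclusion ``$\subseteq$'' is where the second stated property (no $k \pitchfork h$ separates $x_h$ from $x_0$) comes in: any $k \in H(x_0,x_h)$ lies in $H(x_0,v)$, and by Lemma \ref{normal contain} (applied in $H(x_0,x)$, valid since $H(x_0,x_h)\subseteq H(x_0,x)$) either $k \leqslant h$, or $h \leqslant k$, or $k \pitchfork h$; the last is excluded by hypothesis, and $h < k$ would give a chain of length $n+1$ in $H(x_0,x_h)$, contradicting $d_{nor}(x_0,x_h) = n$. For ``$\supseteq$'': given $k \leqslant h$, Proposition \ref{normal chain} (with $h \pitchfork C_{n-1}$) produces a chain $h_0 < \cdots < h_{n-1} = h$ with $h_i \pitchfork C_i$; I would argue $k$ must be one of these $h_i$ — indeed $k \leqslant h = h_{n-1}$ forces $k \pitchfork C_j$ for some $j \leqslant n-1$, and minimality/the chain structure pins $k = h_j$ — hence $k \in H(x_0, \tilde v) = H(x_0,v)$ and $k \leqslant h$, so $k$ separates $x_0$ from $x_h$ (here I use that $F_h = [x_h,v]$ from Lemma \ref{label} and that $x_h$ is characterized within $[x_0,v]$ by exactly the hyperplanes $\leqslant h$). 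Then $H(x_0,x_h)$ is completely determined, so $x_h$ is unique with these two properties.

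Concretely, for the existence/uniqueness statement I would proceed as follows. Let $w \in B_{nor}(x_0,n)$ be any point such that $h$ separates $x_0$ from $w$ and no $k \pitchfork h$ separates $x_0$ from $w$. Running the argument of the previous paragraph verbatim with $w$ in place of $x_h$ shows $H(x_0,w) \subseteq \{k \in H(x_0,x) : k \leqslant h\}$ — wait, this needs $H(x_0,w) \subseteq H(x_0,x)$, which is not given a priori for an arbitrary $w$. So instead I would phrase uniqueness relative to the ambient constraint that already holds for $x_h$: the cleanest route is to show directly that $H(x_0,x_h)$ equals the closed set $A_h := \{k \in H(x_0,x): k \leqslant h\}$, conclude $x_h$ is the unique vertex with $H(x_0,\cdot) = A_h$ (a vertex is determined by the set of hyperplanes separating it from $x_0$, since that set determines the point as the intersection of the corresponding halfspaces), and then observe that any $w$ as in the statement, once we also know $w \in [x_0,x]$, satisfies $H(x_0,w) = A_h$ by the same chain arguments. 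The hypothesis that $w \in B_{nor}(x_0,n)$ gives the upper bound $n$ on chains in $H(x_0,w)$; the hypothesis that $h$ separates $w$ from $x_0$ forces (via Proposition \ref{normal chain} and closedness, the argument of Lemma \ref{max}) that all of $A_h \subseteq H(x_0,w)$; and the no-$k\pitchfork h$ hypothesis plus the chain bound forces $H(x_0,w) \subseteq A_h$.

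The main obstacle I anticipate is the bookkeeping around which ambient interval the hyperplane orders are taken in: the relation $\leqslant$ on hyperplanes (defined before Lemma \ref{normal contain}) depends on a choice of base pair, and Lemma \ref{normal contain} is stated for $h,k \in H(x_0,x)$, so I must be careful that every application of ``non-crossing $\iff$ comparable'' is to hyperplanes genuinely lying in a common $H(x_0,\cdot)$. The safe policy is to do all comparisons inside $H(x_0,x)$ (legitimate since $H(x_0,x_h) \subseteq H(x_0,x)$, intervals being nested), and to invoke Lemma \ref{max} / Proposition \ref{normal chain} to transfer between $H(x_0,v) = H(x_0,\tilde v)$ and the cube path $\{C_i\}$. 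A secondary subtlety is justifying that the second bulleted property in the statement is not vacuous — i.e. that $x_h$ really does satisfy it — but this is exactly the ``$H(x_0,x_h) \subseteq A_h$'' inclusion, which follows since a crossing $k \pitchfork h$ in $H(x_0,x_h) \subseteq H(x_0,v)$ would, by Proposition \ref{dec prel} applied to $x_h \in F_h$ or directly by Lemma \ref{normal contain}, contradict either convexity of $F_h$ or the length bound $d_{nor}(x_0,x_h) = n$. Once the set $H(x_0,x_h)$ is identified, uniqueness is immediate and the proof is complete.
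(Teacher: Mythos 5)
Your plan (identify $H(x_0,x_h)$ with $A_h := \{k \in H(x_0,x): k \leqslant h\}$ and conclude uniqueness because a vertex is determined by its separating hyperplanes) is a genuinely different route from the paper's, and it could work, but as written it has a real gap at the heart of the argument. The existence part of the lemma — that $x_h$ really does satisfy ``no $k \pitchfork h$ separates $x_0$ from $x_h$'' — is exactly what must be proved, yet you first dispose of the $k\pitchfork h$ case ``by hypothesis,'' and your later patch (``would contradict either convexity of $F_h$ or the length bound'') does not go through: a hyperplane $k$ crossing $h$ is \emph{incomparable} with $h$, so it does not extend any chain and cannot violate $d_{nor}(x_0,x_h)=n$, and convexity of $F_h$ by itself yields nothing. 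What is actually needed, and what you never invoke, is the \emph{minimality} of $d(x_0,x_h)$ over $F_h$: if such a $k$ existed, pick $\tilde x_h\in[x_0,x_h]$ with $\tilde x_h\in h^+\cap k^-$; then $\tilde x_h\in[x_0,v]$ and $h$ separates $\tilde x_h$ from $x_0$, so $\tilde x_h\in F_h$ by Lemma~\ref{def for Fh}, but $d(x_0,\tilde x_h)<d(x_0,x_h)$ since $k$ separates $x_h$ but not $\tilde x_h$ from $x_0$ — contradiction. This is the paper's argument and it is the missing idea.

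Two further problems, one fixable and one you flagged but did not resolve. Your ``$\supseteq$'' step is argued incorrectly: from $k\leqslant h$ you claim $k$ must equal one of the $h_j$ in a single chain produced by Proposition~\ref{normal chain}, but a cube $C_j$ is typically crossed by several hyperplanes, so $k\pitchfork C_j$ does not pin $k=h_j$. The inclusion is in fact immediate: $k\leqslant h$ gives $h^+\subseteq k^+$, and $x_h\in h^+$ while $x_0\in k^-$, so $k\in H(x_0,x_h)$; no chain argument is needed. Finally, in the uniqueness step you correctly notice that for an arbitrary $w\in B_{nor}(x_0,n)$ you have no a priori inclusion $H(x_0,w)\subseteq H(x_0,x)$, but the proposed fix (``once we also know $w\in[x_0,x]$'') just moves the burden without discharging it — you never show such a $w$ lies in $[x_0,x]$. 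The paper sidesteps this entirely by taking a hyperplane $k$ separating the two candidate points $x_h,\hat x_h$ and deriving a contradiction from Lemma~\ref{normal contain} and Lemma~\ref{char nor dist}; if you want to stay with the $H(x_0,\cdot)=A_h$ strategy you would instead need to observe that any $k\in H(x_0,w)$ with $k\leqslant h$ automatically lies in $H(x_0,x)$ (because $x\in h^+\subseteq k^+$), which closes the loop without ever assuming $w\in[x_0,x]$.
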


\begin{proof}
Since $x_h \in F_h$, we have $x_h\in [x_0,x] \cap B_{nor}(x_0,n)$ and $h$ separates $x_0$ from $x_h$. Now for any hyperplane $k \pitchfork h$, if $k$ separates $x_h$ from $x_0$, we have $x_h \in h^+ \cap k^+$ and $x_0 \in h^- \cap k^-$. Choose $\tilde{x}_h \in [x_0,x_h]$ such that $\tilde{x}_h \in h^+ \cap k^-$. Since $k$ does not separate $\tilde{x}_h$ from $x_0$, so $d(x_0,\tilde{x}_h) < d(x_0,x_h)$. However, by Lemma \ref{def for Fh}, $\tilde{x}_h \in F_h$, which is a contradiction.

It remains to show that $x_h$ is the unique point satisfying these conditions. Otherwise, let $\hat{x}_h$ be another point satisfying the hypothesis in the lemma and $\hat{x}_h \neq x_h$. Let $k$ be a hyperplane separating $\hat{x}_h$ from $x_h$, and assume $x_h \in k^-$. Obviously, $k \neq h$. If $k\pitchfork h$, by hypothesis, $k$ does not separate $x_h$ from $x_0$, as well as $\hat{x}_h$ from $x_0$, which is a contradiction since $k$ separates $\hat{x}_h$ from $x_h$. So $k$ does not cross $h$, which implies $h^- \subsetneq k^-$ by Lemma \ref{normal contain}. However, $\hat{x}_h \in k^+$, so by Lemma \ref{char nor dist}, $d_{nor}(x_0,x_h) < d_{nor}(x_0,\hat{x}_h)$. This is a contradiction since $d_{nor}(x_0,x_h)\geqslant n$ as $h$ separates $x_0$ from $x_h$.
\end{proof}

\subsection{\u{S}pakula and Wright's Construction.}
We conclude this section with a recent application of normal cube paths, which were invoked by \u{S}pakula and Wright, ~\cite{spakula2016coarse}, in order to provide a new proof that finite dimensional CAT(0) cube complexes  have Yu's Property A. 
The  key to their proof was the construction of a family of maps $h_l$ with the property that for any interval and any neighbourhood of an endpoint of the interval  the maps push that neighbourhood into the interval itself. These maps were defined in terms of the normal cube paths as follows:
 
 \begin{defn}[The $h$ maps]
Given $l\in \mathbb{N}$, we define $h_l\colon X \rightarrow X$ as follows. For $x\in X$, let $h_l(x)$ be the $3l-$th vertex on the normal cube path from $x$ to $x_0$ if $d_{nor}(x,x_0) \geqslant 3l$; and let  it be $x_0$ if $d_{nor}(x,x_0) < 3l$.
\end{defn}

\begin{lem}[\cite{spakula2016coarse}]\label{SW}
Let $h_l$ be defined as above and $y \in B(x,3l)$. Then $h_l(y) \in [x_0,x]$.
\end{lem}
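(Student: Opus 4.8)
The statement asserts that if $y$ lies in the $3l$-ball (edge metric) around $x$, then the $3l$-th vertex on the normal cube path from $y$ to $x_0$ lands in the interval $[x_0,x]$. The natural strategy is to compare two normal cube paths with a common endpoint $x_0$ and to exploit the fellow-traveller property (Proposition~\ref{fellow-traveller}) together with the interval characterization of the $h$-maps via Proposition~\ref{def for v}. The plan is to first reduce to the case $d(x,y) = 1$: if $d(x,y) \le 3l$, pick an edge-geodesic $x = z_0, z_1, \ldots, z_m = y$ with $m \le 3l$ and show $h_l(z_i) \in [x_0, z_{i-1}]$ forces control along the chain. Actually a cleaner route is to work directly with hyperplanes.

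First I would set $n = 3l$ and assume $d_{nor}(y,x_0) \ge n$ (the case $d_{nor}(y,x_0) < n$ gives $h_l(y) = x_0 \in [x_0,x]$ trivially). By Proposition~\ref{char for v}, $h_l(y)$ equals the point $v_y \in [x_0,y] \cap S_{nor}(x_0,n)$ which is farthest from $x_0$ in the edge metric, and by Proposition~\ref{def for v} it is characterized by $[x_0,y] \cap B_{nor}(x_0,n) \subseteq [x_0,v_y]$. So $v_y$ is determined by the set $H(x_0, v_y) \subseteq H(x_0,y)$, which by Lemma~\ref{char nor dist} and Lemma~\ref{max} is exactly the maximal closed subset of $H(x_0,y)$ all of whose chains have length $\le n$. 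To prove $v_y \in [x_0,x]$ it suffices to show $H(x_0,v_y) \subseteq H(x_0,x)$, i.e. every hyperplane separating $x_0$ from $v_y$ also separates $x_0$ from $x$.

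The key step, which I expect to be the main obstacle, is the following claim: if $h \in H(x_0, v_y)$ (so $h$ separates $x_0$ from $y$ and the hyperplanes below $h$ in $H(x_0,y)$ form a chain of length $\le n-1$, with $h$ itself the top of a chain of length exactly $m \le n$ as in Lemma~\ref{char for H_n}), then $h$ separates $x_0$ from $x$. Here I would use that $d(x,y) \le n = 3l$, so the number of hyperplanes in the symmetric difference $H(x,y)$ is at most $3l$. If $h$ did not separate $x_0$ from $x$, then since $h$ separates $x_0$ from $y$, $h$ must lie in $H(x,y)$. The goal is to derive a contradiction with the chain-length bound: intuitively, because $x$ and $y$ are close, the "depth" of $h$ (the length of the longest $\le$-chain below it in $H(x_0,\cdot)$) cannot change much between the two interval orderings, and combined with the fact that $h$ sits at the $n$-th level and $y$ is at normal distance $\ge n$, pushing $h$ outside $[x_0,x]$ would either violate $d(x,y) \le 3l$ or contradict maximality of $v_y$. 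Concretely I would invoke the fellow-traveller property: let $\{C_i\}$ be the normal cube path $x_0 \to x$ and $\{D_i\}$ the normal cube path $x_0 \to y$; by Proposition~\ref{fellow-traveller} (applied with endpoints $x_0, x_0$ and $x, y$) the vertices satisfy $d(v_i, w_i) \le 1$ for all $i$, so in particular $d(v_n, w_n) \le 1$ where $w_n = h_l(y)$. Then Proposition~\ref{normal cube path intersection property} and Lemma~\ref{normal chain} let me track which cube of the $x_0 \to x$ path each relevant hyperplane crosses, and the bound $d(v_n, w_n) \le 1$ pins $h_l(y)$ to within one edge of $v_n = h_l(x) \in [x_0,x]$; a final short argument using convexity of $[x_0,x]$ and the characterization of $v$ upgrades "within one edge" to "lies in $[x_0,x]$".

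An alternative, possibly shorter, finish: once $d(h_l(x), h_l(y)) \le 1$ is established via the fellow-traveller property, observe that $h_l(y) \in [x_0, y]$ and $h_l(x) \in [x_0,x]$ with $d(x,y)$ small; then a direct median-algebra computation — take $\mu(x_0, h_l(y), x)$ and show it equals $h_l(y)$ using that any hyperplane separating $h_l(y)$ from $[x_0,x]$ would have to appear at a chain-level $> n$ in $H(x_0, y)$, contradicting $d_{nor}(x_0, h_l(y)) = n$ — yields $h_l(y) \in [x_0,x]$ directly. I would write up whichever of these two endings is cleaner after checking the details; the fellow-traveller route seems most robust since Proposition~\ref{fellow-traveller} is tailor-made for exactly this $d(x,y) \le 1$-type comparison and the general case bootstraps from it along an edge-geodesic.
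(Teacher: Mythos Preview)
There is a genuine gap arising from the \emph{direction} of the normal cube paths. The map $h_l$ is defined via the normal cube path \emph{from $y$ to $x_0$}, not from $x_0$ to $y$, and normal cube paths are not reversible (this is emphasised immediately after Proposition~\ref{normal cube path intersection property}). Your identification of $h_l(y)$ with the point $v_y$ of Propositions~\ref{def for v} and~\ref{char for v} is therefore unjustified: those results describe the $n$-th vertex on the path \emph{from} $x_0$, whereas $h_l(y)$ is the $3l$-th vertex on the path \emph{towards} $x_0$. Likewise Lemma~\ref{max} and the ``maximal closed subset'' description of $H(x_0,v_y)$ pertain to the wrong orientation, so the chain-length argument for your ``key step'' is built on a misidentification and does not go through. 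The fellow-traveller route has the same defect (you apply it to paths $x_0\to x$ and $x_0\to y$) and an additional one: Proposition~\ref{fellow-traveller} requires both pairs of endpoints to be at distance $\leqslant 1$, but here $d(x,y)\leqslant 3l$. Bootstrapping along an edge geodesic only yields $d(v_n,w_n)\leqslant 3l$, which is far too weak to place $h_l(y)$ inside $[x_0,x]$.

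The paper's argument is direct and avoids both issues. One works with the normal cube path $C_0,\ldots,C_m$ \emph{from $y$ to $x_0$} and shows that every hyperplane $h$ separating $y$ from $\{x_0,x\}$ is crossed within the first $d(x,y)$ cubes: if some $C_i$ crosses no such hyperplane, then every $k\pitchfork C_i$ separates $\{y,x\}$ from $\{x_0,v_{i+1}\}$, and any hyperplane $l$ separating $y$ from $\{x,x_0\}$ still uncrossed at stage $i$ would then cross all such $k$, contradicting the maximality in the definition of the normal cube path. Since $d(x,y)\leqslant 3l$, after $3l$ steps every hyperplane separating $y$ from $\{x_0,x\}$ has been crossed, so $h_l(y)$ lies in every halfspace containing $x_0$ and $x$, i.e.\ $h_l(y)\in[x_0,x]$.
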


\begin{proof}
We only need to show that every halfspace containing $x$ and $x_0$ contains also $z=h_l(y)$. For any hyperplane $h$ such that one of the associated halfspaces, say $h^+$, contains $x$ and $x_0$, either $y\in h^+$ or $y\in h^-$. In the former case, $z\in h^+$, so we only need to check the case that $h$ separates $x,x_0$ from $y$.

Denote by $C_0,C_1,\ldots, C_m$ the normal cube path from $y$ to $x_0$, and denote by $y=v_0,v_1,\ldots,v_m=x_0$ the vertices on this cube path. We shall argue that any hyperplane separating $y$ from $x,x_0$ is ``used" within the first $d(x,y)$ steps on the cube path. Suppose that the cube $C_i$ does not cross any hyperplane $h$ with $h$ separating $y$ from $x,x_0$. Hence every hyperplane $k \pitchfork C_i$ separates $y,x$ from $x_0,v_{i+1}$. If there was a hyperplane $l$ separating $y$ from $x,x_0$ before $C_i$, then necessarily $l$ separates $y,v_{i+1}$ from $x,x_0$, hence $l$ crosses all the hyperplanes $k$ crossing $C_i$. This contradicts the maximality of this step on the normal cube path. Thus, there is no such $l$, and so all the hyperplanes $h$ separating $y$ from $x,x_0$ must be crossed within the first $d(x,y)$ steps.

Since $z$ is the $3l-$th vertex on the cube path and $d(x,y) \leqslant 3l$, all the hyperplanes $h$ separating $y$ from $x,x_0$ must have been crossed before $z$. Thus, any such $h$ actually also separates $y$ from $x,x_0,z$.
\end{proof}

We will use the remarkable properties of the $h$ maps to construct the $S$ sets defined in our characterization of finite asymptotic dimension in the next section.

\section{Finite dimensional CAT(0) cube complexes}

Throughout this section, we fix a CAT(0) cube complex $X$ of finite dimension $\rank$ and equipped with a basepoint $x_0\in X$. We will make use of  the characterization obtained in Corollary~\ref{char for asdim} in order to prove Theorem~\ref{thm a}.

\subsection{Constructing the sets $S(x,k,l)$.}
By Corollary \ref{char for asdim}, in order to prove $X$ has finite asymptotic dimension, we need to find a constant $N \in \mathbb{N}$ such that $\forall l \in \mathbb{N}$, $\forall k=1,2,\ldots, 3l$, $\forall x \in X$, we can assign a subset $S(x,k,l) \subseteq X$, satisfying:
\begin{enumerate}[i)]
  \item $\forall l \in \mathbb{N}$, $\exists S_l >0$, such that $S(x,k,l) \subseteq B(x,S_l)$ for all $k=1,\ldots, 3l$;
  \item $\forall l \in \mathbb{N}$, $\forall k,k'$ with $1 \leqslant k \leqslant k' \leqslant 3l$, $\forall x \in X$,
        $S(x,k,l) \subseteq S(x,k',l)$;
  \item $\forall x,y \in X$ with $d(x,y)=1$,
        $S(y,k,l) \subseteq S(x,k+1,l)$ for all $k=1,2,\ldots, 3l-1$;
  \item $\forall l \in \mathbb{N}$, $\sharp S(x,2l,l) \leqslant N+1$.
\end{enumerate}

Now for $l\in \mathbb{N}$, $k=1,2,\ldots,3l$, and $x\in X$, we define
$$\widetilde{S}(x,k,l)=h_l(B(x,k)).$$
It is easy to show that $\{\widetilde{S}(x,k,l)\}$ satisfies i) to iii), but it does not satisfy iv) above, so we need some modification. Intuitively, we construct $S(x,k,l)$ as a uniformly separated net in $\widetilde{S}(x,k,l)$. To be more precise, we require the following lemma.

\begin{lem}\label{main lemma}
There exist two constants $N,K$ only depending on the dimension $\rank$, such that $\forall l \in \mathbb{N}$, $\forall x \in V$, there are subsets $C_x \subseteq [x_0,x]$, and  maps $p_x\colon [x_0,x] \rightarrow \mathcal{P}(C_x)$, where $\mathcal{P}(C_x)$ denotes the power set of $C_x$, satisfying:
\begin{itemize}
  \item If $d(x,y)=1$ and $y \in [x_0,x]$, then $C_x \cap [x_0,y]=C_y$, and $p_x\vert_{[x_0,y]}=p_y$;
  \item For $z \in [x_0,x]$ and $w \in p_x(z)$, we have $d(z,w) \leqslant Kl$;
  \item $\forall z\in [x_0,x]$, $\sharp \big(B(z,Ml) \cap C_x\big) \leqslant N$, where $M=3 \rank + 3 + K$.
\end{itemize}
\end{lem}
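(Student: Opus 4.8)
The plan is to build the nets $C_x$ and the assignments $p_x$ by induction on the normal distance $d_{nor}(x_0,x)$, simultaneously with an induction on the dimension $\eta$ of the cube complex, exploiting the decomposition from Proposition~\ref{dec final}. The base case $x=x_0$ is trivial ($C_{x_0}=\{x_0\}$). For the inductive step, write $[x_0,x]$ as the union of its normal spheres $[x_0,x]\cap S_{nor}(x_0,n)$, $n=0,1,2,\dots$, and recall from Proposition~\ref{dec final} that each such sphere-slice decomposes as $\bigcup_{h\in H_n}[x_h,v_n]$, where every interval $[x_h,v_n]$ has dimension strictly less than $\eta$. The key structural point is that the number of hyperplanes in $H_n$ is at most $\eta$ (they pairwise cross, being the hyperplanes of a single cube $C_{n-1}$), so each slice is covered by at most $\eta$ lower-dimensional intervals. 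I would therefore apply the $(\eta-1)$-dimensional version of the lemma to each $[x_h,v_n]$ to obtain nets and assignments there, and then assemble $C_x$ as a union over all slices (over all $n$ and all $h\in H_n$) of these lower-dimensional nets, together with the ``hub'' vertices $v_n$ themselves, arranged so that consecutive hubs $v_n,v_{n+1}$ are within bounded normal distance (hence within bounded edge distance, by finite dimension) of one another.

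The edge-compatibility condition (first bullet: $d(x,y)=1$, $y\in[x_0,x]$ forces $C_x\cap[x_0,y]=C_y$ and $p_x|_{[x_0,y]}=p_y$) is the reason the construction must be made canonically, depending only on the interval $[x_0,x]$ and the basepoint — not on any arbitrary choices. Concretely, I would pin down $x_h$ by the canonical description in Lemma~\ref{char for x_h} (closest point to $x_0$ in $F_h$, equivalently the unique point with $h$ separating it from $x_0$ and no hyperplane crossing $h$ separating it from $x_0$) and $v_n$ by Proposition~\ref{char for v}/\ref{def for v} (the farthest point from $x_0$ in $[x_0,x]\cap B_{nor}(x_0,n)$), both of which are intrinsic to $[x_0,x]$. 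When $y\in[x_0,x]$ with $d(x,y)=1$, passing from $x$ to $y$ removes exactly one hyperplane from $H(x_0,x)$; one checks — this is where Lemma~\ref{consistency lemma} and the monotonicity built into Proposition~\ref{def for v} enter — that the normal cube path data, the sets $H_n$, the hubs $v_n$, and the pieces $F_h$ for $[x_0,y]$ are precisely the restrictions of those for $[x_0,x]$, so the inductively-constructed $C_y$, $p_y$ agree with the restrictions of $C_x$, $p_x$ by the inductive hypothesis applied to each lower-dimensional slice.

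For the metric control I would proceed as follows. The second bullet ($d(z,w)\le Kl$ for $w\in p_x(z)$) is arranged by fitting the constant $K$ to the recursion: in dimension $\eta$ one spends a bounded (in terms of $\eta$) amount of ``travel'' moving a point $z$ first to the relevant hub $v_n$ of its slice and then invoking the $(\eta-1)$-dimensional bound, and since $\eta$ is fixed the total stays linear in $l$ with a constant $K=K(\eta)$. The third bullet — local finiteness, $\sharp(B(z,Ml)\cap C_x)\le N$ with $M=3\eta+3+K$ — is the heart of the matter and the step I expect to be the main obstacle. Here one must show that a ball of this radius meets only boundedly many of the pieces: the spacing between hubs $v_n$ along the normal cube path is controlled (so a ball of radius $\sim Ml$ sees only $O(l)$ consecutive slices — but we need $O(1)$, so in fact the net $C_x$ must be further thinned so that points of $C_x$ are $\Omega(l)$-separated in edge distance, giving $O(1)$ per unit of normal distance), while within each slice the $(\eta-1)$-dimensional bound $N(\eta-1)$ controls the count, and the factor $3\eta+3$ in $M$ absorbs the discrepancy between normal distance and edge distance across the at most $\eta$ overlapping lower-dimensional intervals $[x_h,v_n]$ comprising a slice. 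Getting the separation thinning to be simultaneously canonical (for the first bullet) and uniformly linear (for bullets two and three), with constants $N,K$ depending only on $\eta$ and not on $l$ or $x$, is the delicate bookkeeping that the proof must carry out carefully.
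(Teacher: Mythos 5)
Your outline is aligned with the paper's own strategy --- induct on dimension, decompose the interval along normal spheres via Proposition~\ref{dec final}, and rely on the canonical characterizations of $x_h$ and $v_n$ (Lemma~\ref{char for x_h}, Proposition~\ref{char for v}) to secure edge-compatibility --- but two concrete ingredients are missing, and they are exactly what makes the induction close. First, the sub-intervals $[x_h,v_n]$ produced by the decomposition do not have $x_0$ as their left endpoint, so one cannot ``apply the $(\eta-1)$-dimensional version of the lemma'' to them: the lemma as stated fixes $x_0$. The paper proves a strengthened statement (Lemma~\ref{strong main lemma}) with two free endpoints $\bar{x},x$ and sets $C_{\bar{x},x}\subseteq[\bar{x},x]$, and the dimension induction runs on that; without this strengthening the inductive call you want to make is simply not available.

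Second, the thinning you defer as ``delicate bookkeeping'' is the crux of the proof, and the paper's device is both cleaner and different from what you sketch. Rather than building nets on every normal sphere, adjoining every hub $v_n$, and then thinning afterwards (a step that would itself have to be canonical, which you do not address), the paper restricts from the outset to the slices at normal radii that are multiples of $l$, namely $[\bar{x},x]\cap S_{nor}(\bar{x},nl)$ for $n=0,1,2,\ldots$, and takes $C_{\bar{x},x}$ to be the union of the inductively-built $C_{x_h,v_{nl}}$ over these $n$ and over $h\in H_{nl}$. The hubs appear only as endpoints of sub-intervals, never as net points; the base case $\eta=1$ is simply the $l$-separated set $\{y\in[\bar{x},x]:d_{nor}(\bar{x},y)\in l\mathbb{N}\}$. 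The map $p_{\bar{x},x}(z)$ is then defined by first replacing $z$ with the $nl$-th vertex $\tilde{z}$ on the normal cube path from $\bar{x}$ to $z$, where $n=\lfloor d_{nor}(\bar{x},z)/l\rfloor$ (so $d(z,\tilde{z})\leqslant\eta l$), and then applying the lower-dimensional $p$-maps on the slice containing $\tilde{z}$. With that in place the constants come out of a routine recursion: each dimension adds roughly $\eta l$ to the travel bound, and a ball of radius $Ml$ meets at most $3M$ of the chosen slices, each a union of at most $\eta$ sub-intervals, giving the multiplicative bound for $N$.
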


We postpone the proof of the above lemma and first show how to use it to construct $S(x,k,l)$ (and, hence, to conclude the proof of Theorem~\ref{thm a}).

\begin{proof}[Proof of Theorem \ref{thm a}.]
Let $N,K$ be the constants in Lemma \ref{main lemma}. $\forall l \in \mathbb{N}$, $\forall k=1,2,\ldots, 3l$, $\forall x \in X$, let $\widetilde{S}(x,k,l)=h_l(B(x,k))$ be as above, and by Lemma \ref{SW}, we know $\widetilde{S}(x,k,l) \subseteq [x_0,x]$. Now we define
$$S(x,k,l)=\bigcup p_x(\widetilde{S}(x,k,l)),$$
and the only thing left to complete the proof is to verify the conditions in Corollary \ref{char for asdim}.
\begin{enumerate}[i)]
  \item By the definition of $h_l$, we know $\forall y \in B(x,k)$, $d(y,h_l(y)) \leqslant 3\rank l$. So for any $z \in \widetilde{S}(x,k,l)$, $d(z,x)\leqslant (3\rank+3)l$. For such $z$ and any $w \in p_x(z)$, by Lemma \ref{main lemma}, we know $d(z,w) \leqslant Kl$, which implies:
      $$S(x,k,l) \subseteq B(x,(3\rank+3+K)l) = B(x,Ml).$$
  \item $\forall l \in \mathbb{N}$, $\forall k,k'$ with $1 \leqslant k \leqslant k' \leqslant 3l$, $\forall x \in X$, we have $\widetilde{S}(x,k,l) \subseteq \widetilde{S}(x,k',l)$. Now immediately by the definition, $S(x,k,l) \subseteq S(x,k',l)$.
  \item $\forall x,y \in X$ with $d(x,y)=1$, by Lemma \ref{weakly modular}, $y\in [x_0,x]$ or $x\in [x_0,y]$. Assume the former. Let $k=1,2,\ldots, 3l-1$.
      Obviously, $\widetilde{S}(y,k,l) \subseteq \widetilde{S}(x,k+1,l)$, so we have
      \begin{eqnarray*}
        S(y,k,l) & = & \bigcup p_y(\widetilde{S}(y,k,l))  =  \bigcup p_x|_{[x_0,y]}(\widetilde{S}(y,k,l))  =  \bigcup p_x(\widetilde{S}(y,k,l)) \\
                 & \subseteq & \bigcup p_x(\widetilde{S}(x,k+1,l))  =  S(x,k+1,l).
      \end{eqnarray*}
      Here we use the first part of Lemma \ref{main lemma} in the second equation.
      On the other hand, $\widetilde{S}(x,k,l) \subseteq \widetilde{S}(y,k+1,l)$, so we have
      \begin{eqnarray*}
        S(x,k,l) & = & \bigcup p_x(\widetilde{S}(x,k,l))  \subseteq  \bigcup p_x(\widetilde{S}(y,k+1,l)) \\
                 & = & \bigcup p_x|_{[x_0,y]}(\widetilde{S}(y,k+1,l))  =  \bigcup p_y(\widetilde{S}(y,k+1,l)) = S(y,k+1,l).
      \end{eqnarray*}
      Here we use the first part of Lemma \ref{main lemma} in the fourth equality.

  \item By i), we know that $S(x,k,l) \subseteq B(x,Ml)$ for all $k=1,2,\ldots,3l$. Hence, by definition, $S(x,k,l) \subseteq B(x,Ml) \cap C_x$. Now by the third part of Lemma \ref{main lemma}, we have $\sharp S(x,k,l) \leqslant N$.
\end{enumerate}
\end{proof}

The last thing is to prove Lemma \ref{main lemma}. We use the analysis in Section~\ref{sec:normal} to construct $C_x$ and $p_x$ inductively. Recall that in Section~\ref{sec:normal} (Proposition~\ref{dec final}), for any $l,n\in \mathbb{N}$, and any $x\in X$, we have
$$[x_0,x] \cap S_{nor}(x_0,nl)=\bigcup_{h\in H_{nl}}[x_h,v],$$
with $\sharp H_{nl} \leqslant \rank$ and $\dim[x_h,v] < \dim[x_0,x]$. In order to carry out induction on the dimension of $[x_0,x]$, we require a stronger version of Lemma~\ref{main lemma}, which is more flexible on the choice of endpoints of intervals. More explicitly, we have
\begin{lem}\label{strong main lemma}
There exist two constants $N,K$ only depending on the dimension $\rank$, such that $\forall l \in \mathbb{N}$, $\forall \bar{x},x \in V$, $\exists C_{\bar{x},x} \subseteq [\bar{x},x]$, and a map $p_{\bar{x},x}\colon[\bar{x},x] \rightarrow \mathcal{P}(C_{\bar{x},x})$ satisfying:
\begin{itemize}
  \item If $d(x,y)=1$ and $y \in [\bar{x},x]$, then $C_{\bar{x},x} \cap [\bar{x},y]=C_{\bar{x},y}$, and $p_{\bar{x},x}|_{[\bar{x},y]}=p_{\bar{x},y}$;
  \item For $z \in [\bar{x},x]$ and $w \in p_{\bar{x},x}(z)$, we have $d(z,w) \leqslant Kl$;
  \item $\forall z\in [\bar{x},x]$, $\sharp \big(B(z,Ml) \cap C_{\bar{x},x}\big) \leqslant N$, where $M=3 \rank + 3 + K$.
\end{itemize}
\end{lem}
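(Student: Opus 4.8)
The plan is to construct $C_{\bar x,x}$ and $p_{\bar x,x}$ by induction on $\dim[\bar x,x]$, using the decomposition of normal spheres from Proposition~\ref{dec final} at the outermost level and treating the base point of the normal metric as $\bar x$ rather than the fixed $x_0$. When $\dim[\bar x,x]=0$ the interval is a single point and the statement is trivial with $C_{\bar x,x}=[\bar x,x]$ and $p_{\bar x,x}$ the identity. For the inductive step, I would slice $[\bar x,x]$ into the normal spheres $S_{nor}^{\bar x}(\bar x, nl)$ for $n=0,1,2,\ldots$ (normal distance based at $\bar x$). By Proposition~\ref{dec final}, each intersection $[\bar x,x]\cap S_{nor}(\bar x,nl)$ is a union $\bigcup_{h\in H_{nl}}[x_h,v_n]$ of at most $\rank$ intervals, each of strictly smaller dimension than $[\bar x,x]$; apply the inductive hypothesis to each such subinterval $[x_h,v_n]$ to get sets $C_{x_h,v_n}$ and maps $p_{x_h,v_n}$. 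Then set $C_{\bar x,x}$ to be the union over all $n$ and all $h\in H_{nl}$ of these $C_{x_h,v_n}$ (this is a subset of $[\bar x,x]$ since each $[x_h,v_n]\subseteq[\bar x,x]$), and define $p_{\bar x,x}(z)$, for $z\in [\bar x,x]\cap S_{nor}(\bar x,nl)$, say with $z\in[x_h,v_n]$, to be $p_{x_h,v_n}(z)$ together with a bounded set of "bridging" points from the neighbouring sphere slices chosen so that property~iii) (bounded local cardinality) survives, and so that the restriction/consistency property holds.

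The key steps, in order: (1) verify that the normal-sphere decomposition based at $\bar x$ genuinely applies — this is exactly Proposition~\ref{dec final} with $x_0$ replaced by $\bar x$, which is legitimate since nothing in Section~\ref{sec:normal} used that $x_0$ was the global basepoint; (2) verify the radial/consistency property: if $d(x,y)=1$ and $y\in[\bar x,x]$, then the normal cube path from $\bar x$ to $y$ is an initial segment of that from $\bar x$ to $x$ (by the construction of normal cube paths, using Lemma~\ref{consistency lemma}-type reasoning), so the sphere slices of $[\bar x,y]$ are the corresponding slices of $[\bar x,x]$ restricted, and the endpoints $x_h, v_n$ for $[\bar x, y]$ either agree with those for $[\bar x,x]$ or are handled by the inner inductive consistency — hence $C_{\bar x,x}\cap[\bar x,y]=C_{\bar x,y}$ and $p_{\bar x,x}|_{[\bar x,y]}=p_{\bar x,y}$; (3) the distance bound: a point $z$ in slice $nl$ lies within normal distance $\leqslant l$, hence edge distance $\leqslant (\text{something})\cdot l$, of $v_n$, so after composing with the inductive $p_{x_h,v_n}$ (distance $\leqslant Kl$ inside the subinterval) and the bounded bridging, the total displacement is $\leqslant Kl$ for a suitable $K=K(\rank)$ chosen in the recursion; (4) the cardinality bound: a ball $B(z,Ml)$ meets only boundedly many consecutive sphere slices (since consecutive slices are normal-distance $l$ apart, and normal distance dominates a linear function of edge distance only in one direction — here one needs that within edge-distance $Ml$ one changes normal distance by a controlled amount, which holds because each edge step changes $d_{nor}$ by at most $1$), and within each slice $B(z,Ml)$ meets at most $\rank$ of the subintervals $[x_h,v_n]$, each contributing at most $N'$ points by induction, giving $N=N(\rank)$ after solving the recursion $N \gtrsim \rank\cdot(\text{number of slices met})\cdot N'$.

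The main obstacle I expect is step (4) combined with making the recursion close: one must choose $N$ and $K$ \emph{uniformly in the dimension} $\rank$, i.e. the same constants work at every level of the induction on $\dim[\bar x,x]$ from $0$ up to $\rank$. This requires that the "blow-up" at each inductive step be controlled — the number of subintervals is $\leqslant\rank$, the number of sphere slices meeting a fixed $M l$-ball is a constant $c$ independent of everything, and the bridging points added at slice boundaries number a constant $c'$ — so the recursion reads roughly $N_{d}\leqslant c\rank N_{d-1}+c'$ and $K_{d}\leqslant K_{d-1}+O(1)$, which over $\rank$ steps gives $N,K$ depending only on $\rank$ (exponentially in $\rank$, which is fine). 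The delicate point is that the bridging points must be chosen consistently with the $d(x,y)=1$ restriction property of step (2) — i.e. the choice of which net points to add at the boundary between slice $nl$ and slice $(n+1)l$ must depend only on $[\bar x,\cdot]$ in a way compatible with passing to the sub-interval $[\bar x,y]$ — and getting this compatibility to hold simultaneously with the separation/cardinality bound is the technical heart of the argument; this is presumably where the specific structure of normal cube paths (Proposition~\ref{fellow-traveller} on the fellow-traveller property, and Lemma~\ref{char for x_h} characterising $x_h$) is used to pin down the $x_h$'s canonically.
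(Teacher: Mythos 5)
Your overall strategy — induction on $\dim[\bar x,x]$, slicing $[\bar x,x]$ by the normal spheres $S_{nor}(\bar x,nl)$, decomposing each slice via Proposition~\ref{dec final} into at most $\rank$ lower-dimensional intervals $[x_h,v_n]$, and pushing the constants through a recursion of depth $\rank$ — is exactly the paper's approach, and your cardinality bookkeeping (at most $3M$ slices meeting a ball, $\leqslant\rank$ intervals per slice, inductive bound per interval) is the right shape. But there is a genuine gap precisely where you flag one, and it is not resolved the way you guess.

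You only define $p_{\bar x,x}(z)$ for $z$ lying on one of the spheres $S_{nor}(\bar x,nl)$, and then propose to handle everything else (and to rescue the consistency property) by adding unspecified ``bridging points'' at the boundaries between consecutive slices. But $p_{\bar x,x}$ must be defined on \emph{all} of $[\bar x,x]$ — in the application it is applied to $\widetilde S(x,k,l)=h_l(B(x,k))\subseteq[x_0,x]$, whose elements are not constrained to lie on any $nl$-sphere. The paper's fix is cleaner and needs no bridging whatsoever: for arbitrary $z\in[\bar x,x]$, let $\tilde z$ be the $nl$-th vertex on the normal cube path from $\bar x$ to $z$, where $n=\lfloor d_{nor}(\bar x,z)/l\rfloor$; then $\tilde z\in[\bar x,x]\cap S_{nor}(\bar x,nl)$ with $d(z,\tilde z)\leqslant\rank\, l$, and one sets
$p_{\bar x,x}(z)=\bigcup\{p_{x_h,v_{nl}^x}(\tilde z): h\in H_{nl}^x,\ \tilde z\in[x_h,v_{nl}^x]\}$,
while $C_{\bar x,x}$ is simply $\bigcup_n\bigcup_h C_{x_h,v_{nl}^x}$ with no extra points. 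Thus the displacement bound accrues as $K_\rank l = \rank\, l + K_{\rank-1}l$, giving $K=O(\rank^2)$, and the consistency condition is verified directly for this projection-based $p$ by combining the fellow-traveller property (Proposition~\ref{fellow-traveller}), the characterization of $v_{nl}$ (Proposition~\ref{char for v}), the characterization of $x_h$ (Lemma~\ref{char for x_h}), and Lemma~\ref{consistency lemma} — showing $H_{nl}^y\subseteq H_{nl}^x\subseteq H_{nl}^y\cup\{h'\}$, $y_h=x_h$, $\mu(x_h,v_{nl}^x,y)=v_{nl}^y$, and hence that both $C$ and $p$ restrict exactly. Your proposal correctly anticipates that these structural lemmas are what make consistency work, but the mechanism you reach for (bridging points) would reintroduce arbitrary choices that would be very hard to make compatible with restriction; the projection-to-$\tilde z$ device is what eliminates that difficulty entirely, and its absence is the substantive gap in your argument.
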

It is obvious that Lemma \ref{main lemma} is implied by Lemma \ref{strong main lemma} (one just needs to take $\bar{x}=x_0$). Now we prove Lemma \ref{strong main lemma}.

\begin{proof}[Proof of Lemma \ref{strong main lemma}]
Fix an $l \in \mathbb{N}$. We will carry out induction on $\dim[\bar{x},x]$.

Given any $\bar{x},x \in V$ with $\dim[\bar{x},x]=1$, we define
$$C_{\bar{x},x}=\{y\in [\bar{x},x]: d_{nor}(\bar{x},y) \in l\mathbb{N}\},$$
where $l\mathbb{N}=\{0,l,2l,3l,\ldots\}$. Since $\dim[\bar{x},x]=1$, $[\bar{x},x]$ is indeed isometric to an interval in $\mathbb{R}$. We define $p_{\bar{x},x}\colon [\bar{x},x] \rightarrow \mathcal{P}(C_{\bar{x},x})$ as follows: for any $y\in [\bar{x},x]$, $p_{\bar{x},x}(y)$ consists of a single point which is at distance $l\lfloor d_{nor}(\bar{x},y)/l \rfloor$ from $\bar{x}$ in $[\bar{x},y]$, where $\lfloor \cdot \rfloor$ is the function of taking integer part. Now it is obvious that
\begin{itemize}
  \item If $d(x,y)=1$ and $y \in [\bar{x},x]$, then $C_{\bar{x},x} \cap [\bar{x},y]=C_{\bar{x},y}$, and $p_{\bar{x},x}|_{[\bar{x},y]}=p_{\bar{x},y}$;
  \item For any $z \in [\bar{x},x]$ and $w \in p_{\bar{x},x}(z)$, we have $d(z,w) \leqslant l$;
  \item $\forall z\in [\bar{x},x]$, $\sharp \big(B(z,Ml) \cap C_{\bar{x},x}\big) \leqslant 3M$.
\end{itemize}

Suppose for any $\bar{x},x \in V$ with $\dim[\bar{x},x]\leqslant \rank-1$, we have defined $C_{\bar{x},x} \subseteq [\bar{x},x]$ and a map $p_{\bar{x},x}\colon [\bar{x},x] \rightarrow \mathcal{P}(C_{\bar{x},x})$ satisfying:
\begin{itemize}
  \item If $d(x,y)=1$ and $y \in [\bar{x},x]$, then $C_{\bar{x},x} \cap [\bar{x},y]=C_{\bar{x},y}$, and $p_{\bar{x},x}|_{[\bar{x},y]}=p_{\bar{x},y}$;
  \item For $z \in [\bar{x},x]$ and $w \in p_{\bar{x},x}(z)$, we have $d(z,w) \leqslant \frac{(\rank-1)\rank}{2}l$;
  \item $\forall z\in [\bar{x},x]$, $\sharp \big(B(z,Ml) \cap C_{\bar{x},x}\big) \leqslant (3M)^{\rank-1} (\rank-1)!$.
\end{itemize}
Now we focus on $\bar{x},x \in V$ with $\dim[\bar{x},x]=\rank$. For any $n \in \mathbb{N}$ with $nl \leqslant d_{nor}(\bar{x},x)$, by Proposition~\ref{dec final},
$$[\bar{x},x] \cap S_{nor}(\bar{x},nl)=\bigcup_{h\in H_{nl}^x}F_h^x=\bigcup_{h\in H_{nl}^x}[x_h,v_{nl}^x],$$
where $v_{nl}^x$ is the farthest point from $\bar{x}$ in $[\bar{x},x] \cap S_{nor}(\bar{x},nl)$, $H_{nl}^x$ is the set of hyperplanes crossing the $nl$-th cube of the normal cube path from $\bar{x}$ to $x$, and we also have $\dim [x_h,v_{nl}^x] < \dim[\bar{x},x]$. By induction, $C_{x_h,v_{nl}^x}$ and $p_{x_h,v_{nl}^x}$ have already been defined. Now we define
$$C_{\bar{x},x}^n=\bigcup _{h\in H_{nl}^x}C_{x_h,v_{nl}^x},$$
and
$$C_{\bar{x},x}=\bigcup_{n=0}^{\lfloor d_{nor}(\bar{x},x)/l \rfloor}C_{\bar{x},x}^n.$$
For any $z\in [\bar{x},x]$, let $\tilde{z}$ be the $nl$-th vertex on the normal cube path from $\bar{x}$ to $z$, where $n=\lfloor d_{nor}(\bar{x},z)/l \rfloor$, so $d_{nor}(\tilde{z},z)\leqslant l$, which implies $d(\tilde{z},z) \leqslant \rank l$, and
$$\tilde{z} \in [\bar{x},x] \cap S_{nor}(\bar{x},nl)=\bigcup_{h\in H_{nl}^x}[x_h,v_{nl}^x].$$
Now define
$$p_{\bar{x},x}(z)=\bigcup\big\{p_{x_h,v_{nl}^x}(\tilde{z}): h\in H_{nl}^x \mbox{~and~}\tilde{z}\in [x_h,v_{nl}^x]\big\},$$
and we need to verify the requirements hold for $C_{\bar{x},x}$ and $p_{\bar{x},x}$.

\textbf{\emph{First}}, suppose $d(x,y)=1$ and $y \in [\bar{x},x]$, and let $h'$ be the hyperplane separating $x$ from $y$. Given $n\in \mathbb{N}$ such that $[\bar{x},y] \cap S_{nor}(\bar{x},nl) \neq \emptyset$, by Proposition \ref{char for v}, $v_{nl}^x$ is the $nl$-th vertex on the normal cube path from $\bar{x}$ to $x$, and $v_{nl}^y$ is the $nl$-th vertex on the normal cube path from $\bar{x}$ to $y$. Due to the fellow-traveller property, Proposition~\ref{fellow-traveller}, $d(v_{nl}^x\, ,v_{nl}^y) \leqslant 1$. By Proposition \ref{def for v}, we have
$$v_{nl}^y \in S_{nor}(\bar{x},nl)\cap [\bar{x},y] \subseteq S_{nor}(\bar{x},nl)\cap [\bar{x},x] \subseteq [\bar{x},v_{nl}^x].$$
Recall that $H(z,w)$ denotes the set of all hyperplanes separating $z$ from $w$. Obviously,
$$H(\bar{x},x)=H(\bar{x},y) \cup \{h'\},$$
which implies $H_{nl}^y \subseteq H_{nl}^x \subseteq H_{nl}^y \cup \{h'\},$ by Lemma \ref{char nor dist} and Lemma \ref{char for H_n}.

If $h' \in H_{nl}^x$ then $F_{h'}^x \cap [\bar{x},y]=\emptyset$ by Proposition \ref{dec prel}. On the other hand, $\forall h\in H_{nl}^y$ by Lemma \ref{char for x_h}, $y_h$ is the unique point in $B_{nor}(\bar{x},nl)$ such that $h$ separates $\bar{x}$ from $y_h$, and for any hyperplane $k \pitchfork h$, $k$ does not separate $y$ from $\bar{x}$. This implies $y_h=x_h$ since $H_{nl}^y \subseteq H_{nl}^x$, so we can do induction for the new ``base" point $y_h=x_h$ and $v_{nl}^y, v_{nl}^x$, since $d(v_{nl}^x,v_{nl}^y) \leqslant 1$ and $v_{nl}^y \in [x_h,v_{nl}^x]$. This implies
$$C_{x_h,v_{nl}^x} \cap [x_h,v_{nl}^y]=C_{x_h,v_{nl}^y}.$$
Since $C_{x_h,v_{nl}^x} \subseteq [x_h,v_{nl}^x]$, we have
$$C_{x_h,v_{nl}^x} \cap [\bar{x},y]=C_{x_h,v_{nl}^x} \cap [x_h,v_{nl}^x] \cap [\bar{x},y]=C_{x_h,v_{nl}^x} \cap [x_h, \mu(x_h,v_{nl}^x,y)].$$
\textbf{Claim:} $\mu(x_h,v_{nl}^x,y)=v_{nl}^y$.
Indeed, if $v_{nl}^x=v_{nl}^y$, then it holds naturally; If $v_{nl}^x \neq v_{nl}^y$, then by Lemma \ref{consistency lemma}, $v_{nl}^x \notin [\bar{x},y]$. Since $d(v_{nl}^x,v_{nl}^y)=1$, so $v_{nl}^x \in [v_{nl}^y,y]$ or $v_{nl}^y \in [v_{nl}^x,y]$. While the former cannot hold since $[v_{nl}^y,y] \subseteq [\bar{x},y]$, so $v_{nl}^y \in [v_{nl}^x,y]$, which implies
$$v_{nl}^y \in [x_h,y] \cap [x_h,v_{nl}^x] \cap [v_{nl}^x,y],$$
i.e. $v_{nl}^y=\mu(x_h,v_{nl}^x,y)$.

By the claim,
$$C_{x_h,v_{nl}^x} \cap [\bar{x},y]=C_{x_h,v_{nl}^x} \cap [x_h, v_{nl}^y]=C_{x_h,v_{nl}^y}.$$
Now for the above $n$, we have
\begin{eqnarray*}
        C_{\bar{x},x}^n \cap [\bar{x},y] & = & \bigcup_{h\in H_{nl}^x}\big(C_{x_h,v_{nl}^x} \cap [\bar{x},y]\big)=\bigcup_{h\in H_{nl}^y}\big(C_{x_h,v_{nl}^x} \cap [\bar{x},y]\big) \\
                 & = & \bigcup_{h\in H_{nl}^y} C_{x_h,v_{nl}^y} =\bigcup_{h\in H_{nl}^y} C_{y_h,v_{nl}^y}= C_{\bar{x},y}^n.
\end{eqnarray*}
Since $C_{\bar{x},x}^n \subseteq [\bar{x},x] \cap S_{nor}(\bar{x},nl)$, we have
\begin{eqnarray*}
        C_{\bar{x},x} \cap [\bar{x},y] & = & \bigcup_{n=0}^{\lfloor d_{nor}(\bar{x},x)/l \rfloor}C_{\bar{x},x}^n \cap [\bar{x},y] = \bigcup_{n:[\bar{x},x] \cap S_{nor}(\bar{x},nl) \neq \emptyset} C_{\bar{x},x}^n \cap [\bar{x},y]\\
                 & = & \bigcup_{n:[\bar{x},y] \cap S_{nor}(\bar{x},nl) \neq \emptyset} C_{\bar{x},x}^n \cap [\bar{x},y] = \bigcup_{n:[\bar{x},y] \cap S_{nor}(\bar{x},nl) \neq \emptyset} C_{\bar{x},y}^n = C_{\bar{x},y}.
\end{eqnarray*}

$\forall z\in [\bar{x},y]$, one need to show that $p_{\bar{x},x}(z)=p_{\bar{x},y}(z)$. Let $\tilde{z}$ be the $nl$-th vertex on the normal cube path from $\bar{x}$ to $z$, where $n=\lfloor d_{nor}(\bar{x},z)/l \rfloor$. By the analysis above, we know
$$d(v_{nl}^x,v_{nl}^y) \leqslant 1, v_{nl}^y \in [v_{nl}^x,y], x_h=y_h, H_{nl}^y \subseteq H_{nl}^x \subseteq H_{nl}^y \cup \{h'\}.$$
For $h\in H_{nl}^x$ with $\tilde{z}\in [x_h,v_{nl}^x]$, then $h\in H_{nl}^y$, i.e. $h \neq h'$ since $\tilde{z}\in [\bar{x},y]$. Now for such $h$,
$$\tilde{z} \in [\bar{x},y] \cap [x_h,v_{nl}^x] = [x_h,v_{nl}^y]=[y_h,v_{nl}^y],$$
where the first equation comes from the claim above. Inductively, we know for such $h$,
$$p_{x_h,v_{nl}^x}(\tilde{z})=p_{y_h,v_{nl}^y}(\tilde{z}).$$
Now by definition,
\begin{eqnarray*}
        p_{\bar{x},x}(z) & = & \bigcup\big\{p_{x_h,v_{nl}^x}(\tilde{z}): h\in H_{nl}^x \mbox{~and~}\tilde{z}\in [x_h,v_{nl}^x]\big\}\\
                         & = & \bigcup\big\{p_{x_h,v_{nl}^x}(\tilde{z}): h\in H_{nl}^y \mbox{~and~}\tilde{z}\in [x_h,v_{nl}^x]\big\}\\
                         & = & \bigcup\big\{p_{y_h,v_{nl}^y}(\tilde{z}): h\in H_{nl}^y \mbox{~and~}\tilde{z}\in [y_h,v_{nl}^y]\big\}\\
                         & = & p_{\bar{x},y}(z).
\end{eqnarray*}

\textbf{\emph{Second}}, for any $z \in [\bar{x},x]$ and $w \in p_{\bar{x},x}(z)$, assume that $w\in p_{x_h,v_{nl}^x}(\tilde{z})$ for some $h\in H_{nl}^x$ and $\tilde{z}\in [x_h,v_{nl}^x]$ as in the definition. By induction, we know $d(\tilde{z},w) \leqslant \frac{(\rank-1)\rank}{2}l$ since $\dim[x_h,v_{nl}^x]\leqslant \rank-1$. So
$$d(z,w) \leqslant d(z,\tilde{z})+d(\tilde{z},w) \leqslant \rank l+\frac{(\rank-1)\rank}{2}l=\frac{\rank(\rank+1)}{2}l.$$

\textbf{\emph{Third}}, for any $z \in [\bar{x},x]$, consider $B(z,Ml) \cap C_{\bar{x},x}$. Suppose $n\in \mathbb{N}$ satisfying $B(z,Ml) \cap C_{\bar{x},x} \cap S_{nor}(\bar{x},nl) \neq \emptyset$, so $B(z,Ml) \cap \big( \bigcup_{h\in H_{nl}^x}[x_h,v_{nl}^x] \big) \neq \emptyset$, which means there exists some $h\in H_{nl}^x$ such that $B(z,Ml) \cap [x_h,v_{nl}^x] \neq \emptyset$. For such $n$ and $h$, let $z'=\mu(z,x_h,v_{nl}^x) \in [x_h,v_{nl}^x]$. Obviously,
$$B(z,Ml) \cap [x_h,v_{nl}^x] \subseteq B(z',Ml) \cap [x_h,v_{nl}^x].$$
By induction, we have
$$\sharp \big( B(z,Ml) \cap C_{x_h,v_{nl}^x} \big) \leqslant \sharp \big( B(z',Ml) \cap C_{x_h,v_{nl}^x} \big) \leqslant (3M)^{\rank-1} (\rank-1)!.$$
Now for the above $z$, there exist at most $3M$ values of $n$ such that $B(z,Ml) \cap [\bar{x},x] \cap S_{nor}(\bar{x},nl) \neq \emptyset$; and for such $n$, since $\sharp H_{nl}^x \leqslant \rank$, there exist at most $\rank$ hyperplanes $h$ such that $B(z,Ml) \cap [\bar{x},x] \cap [x_h,v_{nl}^x] \neq \emptyset$. So we have
\begin{eqnarray*}
        \sharp \big(B(z,Ml) \cap C_{\bar{x},x}\big) & \leqslant & \sum_{n:B(z,Ml) \cap [\bar{x},x] \cap S_{nor}(\bar{x},nl) \neq \emptyset} \sharp \big(B(z,Ml) \cap C_{\bar{x},x}^n \big)\\
                & \leqslant & \sum_{n: as~above} ~~\sum_{h\in H_{nl}^x} \sharp \big(B(z,Ml) \cap C_{x_h,v_{nl}^x} \big)\\
                & \leqslant & 3M \cdot \rank \cdot (3M)^{\rank-1} (\rank-1)! = (3M)^\rank \rank!.
\end{eqnarray*}

Now we take $K=\frac{(\rank-1)\rank}{2}$, and $N=(3M)^\rank \rank!=(3K+9\rank+9)^\rank \rank!$, then the lemma holds for these constants.
\end{proof}

\section{Coarse median spaces}
In this section, we discuss the coarse median case, and prove Theorem~\ref{thm b}. We fix a coarse median space $X$ with geodesic metric $\rho$ and coarse median $\mu$ with parameters $K,H$ and finite rank $\rank$. The definitions and notations are the same as in Section 2.4. According to Remark~\ref{median assup}, we also assume that the coarse median $\mu$ satisfies M1 and M2. We recall:

\begin{thm}[\cite{spakula2016coarse}]\label{sw thm}
Any geodesic uniformly locally finite coarse median space of finite rank and at most exponential growth has Property A.
\end{thm}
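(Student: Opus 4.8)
The statement of Theorem~\ref{sw thm} is due to \u{S}pakula and Wright \cite{spakula2016coarse}; here I would recall the shape of their argument, since it is precisely this construction which, combined with our characterization, yields the stronger Theorem~\ref{thm b}, and Theorem~\ref{sw thm} is then recovered from Theorem~\ref{thm b} via Ozawa's implication ``subexponential asymptotic dimension growth $\Rightarrow$ Property A'' \cite{ozawa2012metric}. Their proof transports the cube-complex machinery of Sections~\ref{sec:normal}--5 into the coarse setting: one replaces exact intervals by coarse intervals $[a,b]_\mu:=\{x\in X:\rho(\mu(a,b,x),x)\leqslant R\}$, the normal distance by a coarse-median analogue $d_\mu$, and the maps $h_l$ by coarse maps which, for $y$ in a metric ball $B(x,3l)$, land in $[x_0,x]_\mu$ --- the coarse version of Lemma~\ref{SW}. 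The crucial input from finite rank is a coarse analogue of Proposition~\ref{dec final}: a coarse sphere $\{w\in[\bar x,x]_\mu:d_\mu(\bar x,w)\approx nl\}$ can be covered by a controlled family of coarse intervals of strictly smaller rank, which opens the door to an induction on rank exactly as in the proof of Lemma~\ref{strong main lemma}.

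Granting this, the plan for Theorem~\ref{thm b} is to verify the hypotheses of Theorem~\ref{main prop} with a subexponential function $g$. For each $l$ one sets $\widetilde S(x,k,l)=h_l(B(x,k))\subseteq[x_0,x]_\mu$ and then thins $\widetilde S(x,k,l)$ to a net $S(x,k,l)$ by composing with coarse analogues of the sets $C_{\bar x,x}$ and the projections $p_{\bar x,x}$ of Lemma~\ref{strong main lemma}, constructed by induction on rank: in rank~$1$ the coarse interval is quasi-isometric to a real interval and one takes an $l$-net, and in the inductive step one glues together the nets coming from the lower-rank coarse intervals appearing in each coarse sphere. Conditions~(i) (boundedness) and~(ii) (monotonicity) are then immediate from the construction, and condition~(iii) (consistency when $x$ is moved to an adjacent vertex $y$) follows, as in Section~5, from a fellow-traveller property for $d_\mu$, which matches up the coarse-sphere decompositions for $x$ and for $y$; the discrepancies introduced here are additive constants depending only on the parameters $K,H$ and on the rank, and must be tracked through the induction alongside the other estimates.

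The main obstacle is condition~(iv), the cardinality bound $\sharp S(x,k,l)\leqslant g(l)$, and it is here --- and only here --- that the hypothesis of at most exponential volume growth enters. In the cube-complex case the decomposition of a sphere involved at most $\rank$ hyperplanes, which produced the \emph{constant} bound $N=(3M)^\rank\rank!$; in a coarse median space there is no such honest bound, and a coarse sphere can only be covered by coarse intervals indexed by a net lying inside it, so the number of pieces meeting a ball of radius $\sim l$ is governed by the volume growth of $X$ at a scale comparable with $l$. Carrying this through the constant-depth rank induction produces a bound on $\sharp S(x,k,l)$ which now grows with $l$, and the real work is to show that under the at most exponential growth hypothesis this bound can be taken subexponential; this delicate counting, together with preventing the accumulated coarse errors from swamping the length scale $l$, is the heart of the matter. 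A constant bound here would instead give finite asymptotic dimension, which is why that sharper conclusion is only conjectured. Once $g$ is seen to be subexponential, Theorem~\ref{main prop} gives $\ad_X\preceq g$, which is Theorem~\ref{thm b}; and for a uniformly locally finite such $X$, this subexponential growth together with Ozawa's theorem delivers Property A, recovering Theorem~\ref{sw thm}.
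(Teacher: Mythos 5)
Your high-level framing is right: the theorem is cited from \u{S}pakula--Wright, the paper recovers it from Theorem~\ref{thm b} together with Ozawa's implication, and Theorem~\ref{thm b} itself is proved by verifying the hypotheses of Theorem~\ref{main prop} with sets $S(x,k,l)$ built from coarse-median analogues of the $h$ maps. But the middle of your proposal describes a proof that the paper (and, as far as the construction cited here goes, \u{S}pakula--Wright) does \emph{not} carry out, and the difference is not cosmetic.

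You assert that the coarse median argument proceeds by porting the normal-cube-path machinery: a coarse normal distance $d_\mu$, a decomposition of ``coarse spheres'' into lower-rank coarse intervals (a coarse analogue of Proposition~\ref{dec final}), and a rank induction that thins $\widetilde{S}(x,k,l)=h_l(B(x,k))$ to a net via coarse versions of $C_{\bar x,x}$ and $p_{\bar x,x}$ from Lemma~\ref{strong main lemma}. None of this happens. In the coarse median proof the sets are defined \emph{directly} and without any thinning: one applies Lemma~\ref{lem2} with $a=y$, $b=x_0$, $r=r_t$ to each $y\in B(x,k)$, obtaining a point $h_y\in[y,x_0]_{L_1(r_t)}$, and simply sets $S(x,k,l_t)=\{h_y : y\in B(x,k)\}$. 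Because this depends on $y$ only through membership in $B(x,k)$, the monotonicity condition~(ii) and the consistency condition~(iii) follow immediately from trivial inclusions of balls ($B(x,k-\rho(x,y))\subseteq B(x,k)\cap B(y,k)$, etc.); there is no fellow-traveller argument and no need to match up sphere decompositions for adjacent base points.

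The cardinality bound~(iv) is also obtained quite differently from what you sketch. There is no constant-depth rank induction. Instead one forms $m_y=\mu(x,y,x_0)$ and $p_y=\mu(m_y,x_0,h_y)$, uses Lemma~\ref{lem3} to see $\rho(h_y,p_y)\leqslant\alpha r_t+\beta$, uses Lemmas~\ref{lem1}--\ref{lem2} to put $p_y$ into a coarse interval $[x,x_0]_{L_1(\lambda)}$ within controlled distance of $x$, and then invokes Bowditch's Proposition~9.8 of \cite{bowditch2014embedding} to bound the number of possible $p_y$ by a \emph{polynomial} $P(l_t)$. Exponential volume growth then bounds the number of $h_y$ within radius $\alpha r_t+\beta$ of each such $p_y$ by $c'c^{r_t}$, giving $\sharp S(x,k,l_t)\leqslant P(l_t)c'c^{r_t}$; subexponentiality comes from arranging $r_t/l_t\to 0$ in the choice of the scale sequence. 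Finite rank enters only through the constant $\lambda$, the parameters in $L_3$, and Bowditch's polynomial bound --- not through a coarse sphere decomposition or an induction on rank. So while your proposal is not nonsense as a research programme (and your closing remark correctly identifies why a constant bound is only a conjecture), it is not the argument the paper uses; the actual proof sidesteps the normal-cube-path machinery entirely in the coarse case and is substantially simpler.
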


Our result, Theorem~\ref{thm b}, says that any coarse median space as above has subexponential asymptotic dimension growth. Thus, combining with Ozawa's result~\cite{ozawa2012metric},
our theorem yields a strengthening  of  Theorem~\ref{sw thm}.

To prove Theorem \ref{thm b}, we use several notations and lemmas from~\cite{spakula2016coarse}.
We use the notation $x\sim_s y$ for $\rho(x,y) \leqslant s$.
Given $r>0$ and $a,b\in X$, the coarse interval $[a,b]_r$ is defined to be:
$$[a,b]_r=\{z\in X: \mu(a,b,z)\sim_r z\}.$$
By a result of Bowditch~\cite{bowditch2014embedding}, there exists a constant $\lambda >0$ depending only on the parameter $K,H$, such that for all $x,y,z\in X$, $\mu(x,y,z)\in [x,y]_\lambda$.

Also recall that the median axiom M3 holds in the coarse median case up to a constant $\gamma \geqslant 0$ depending only on the parameters $K,H$: for all $x,y,z,u,v\in X$, we have
$$\mu(\mu(x,y,z),u,v)\sim_\gamma \mu(\mu(x,u,v), \mu(y,u,v), z).$$
Actually we can take $\gamma=3K(3K+2)H(5)+(3K+2)H(0)$.

Given $r,t,\kappa \geqslant 0$, denote
\begin{eqnarray*}
     &&L_1(r)=(K+1)r+K\lambda +\gamma +2H(0),\\
     &&L_2(r,\kappa)=(K+1)r+\kappa+H(0), \mbox{and}\\
     &&L_3(r,t)=3^\rank K^\rank rt+r.
\end{eqnarray*}

We need the following lemmas from \cite{spakula2016coarse}.

\begin{lem}[\cite{spakula2016coarse}]\label{lem1}
Let $X$ be a coarse median space, $r\geqslant 0$, and let $a,b\in X$, $x\in [a,b]_\lambda$. Then $[a,x]_r \subset [a,b]_{L_1(r)}$.
\end{lem}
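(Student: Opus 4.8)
The plan is to unwind both coarse intervals using only the defining inequality $[a,b]_\lambda=\{z:\mu(a,b,z)\sim_\lambda z\}$, the coarse median axioms C1 (the $K,H(0)$-Lipschitz estimate), M1, M2, and the coarse associativity relation M3 up to $\gamma$. Fix $y\in[a,x]_r$; we must bound $\rho(\mu(a,b,y),y)$ by $L_1(r)=(K+1)r+K\lambda+\gamma+2H(0)$. The natural move is to insert $\mu(a,x,y)$ as an intermediate point: by the triangle inequality it suffices to control $\rho(y,\mu(a,x,y))$ and $\rho(\mu(a,x,y),\mu(a,b,y))$. The first term is $\le r$ because $y\in[a,x]_r$. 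For the second, I would rewrite $\mu(a,x,y)$ using M3: apply the coarse associativity relation to move the ``$x$'' inside, writing $\mu(\mu(a,b,y),\ldots)$ in a form that exposes the hypothesis $x\in[a,b]_\lambda$, i.e. $\mu(a,b,x)\sim_\lambda x$.

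Concretely, I expect the computation to run as follows. Using M2 to arrange arguments and then M3 with the substitution that puts $a,y$ in the ``outer'' slots, one gets
$$\mu(x,a,y) \;=\; \mu\bigl(\mu(a,b,x),a,y\bigr) \text{ up to } K\lambda+H(0) \text{ (from C1, since } \mu(a,b,x)\sim_\lambda x\text{)},$$
and then M3 gives
$$\mu\bigl(\mu(a,b,x),a,y\bigr) \;\sim_\gamma\; \mu\bigl(\mu(a,a,y),\mu(b,a,y),x\bigr) \;=\; \mu\bigl(a,\mu(a,b,y),x\bigr)$$
using M1 ($\mu(a,a,y)=a$) and M2. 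So $\mu(a,x,y)$ is within $K\lambda+\gamma+2H(0)$ (bookkeeping the two $H(0)$'s from the two C1 applications) of $\mu\bigl(a,\mu(a,b,y),x\bigr)$. Now $\mu\bigl(a,\mu(a,b,y),x\bigr)$ lies on a coarse interval from $a$ to $\mu(a,b,y)$, and one more application of the hypothesis $y\in[a,x]_r$ together with C1 — replacing $x$ by something within $r$ of $y$, or comparing $\mu(a,\mu(a,b,y),x)$ with $\mu(a,\mu(a,b,y),y)=\mu(a,b,y)$ — costs $Kr$. Collecting: $\rho(\mu(a,b,y),y)\le r + Kr + K\lambda+\gamma+2H(0) = (K+1)r+K\lambda+\gamma+2H(0)=L_1(r)$, as claimed. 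Some of the constant-chasing may shuffle which term absorbs which $H(0)$, but the shape $L_1(r)=(K+1)r+K\lambda+\gamma+2H(0)$ tells us exactly how many C1-applications ($K$-dilations and $H(0)$-shifts) and M3-applications ($\gamma$-shifts) are allowed, which pins the argument down.

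The main obstacle is purely organizational: choosing the right instance of M3 so that, after applying M1 and M2, the term $\mu(a,b,x)$ appears and can be collapsed to $x$ via the hypothesis, while simultaneously keeping the total number of C1-invocations down to the budget of two $H(0)$'s and one $K$-dilation on $r$ allowed by $L_1$. There are several superficially plausible ways to insert intermediate medians, and most of them overspend the constant budget; the correct one is essentially forced by reverse-engineering $L_1(r)$. Once the right expansion is identified, every step is a direct appeal to C1, M1, M2, or the $\gamma$-coarse M3, with no further ideas needed.
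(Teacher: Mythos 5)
Your overall strategy is the right one: insert $\mu(a,x,y)$, spend one $r$, one $K\lambda+H(0)$, one $\gamma$, one $Kr+H(0)$. Indeed the paper only cites this lemma from \u{S}pakula--Wright and gives no proof, so the ``reverse-engineer the constant'' heuristic is exactly how one reconstructs it. But the way you have ordered the moves leaves a genuine gap in the last step, and I don't think it can be patched without changing the order.

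After your M3 application you have, up to $K\lambda+\gamma+H(0)$, that $\mu(a,x,y)$ is close to $p:=\mu(a,\mu(a,b,y),x)$. Combined with $\rho(\mu(a,x,y),y)\leqslant r$ this gives $\rho(y,p)\leqslant r+K\lambda+\gamma+H(0)$. To finish you need $\rho(p,\mu(a,b,y))\leqslant Kr+H(0)$, and neither justification you offer supplies that. ``Replacing $x$ by something within $r$ of $y$'' would, via C1, cost $K\rho(x,y)$ and not $Kr$; the point $x$ can be arbitrarily far from $y$ (it is the far endpoint of the coarse interval $[a,x]_r$ containing $y$, not a point near $y$). And $\mu(a,\mu(a,b,y),y)=\mu(a,b,y)$ is \emph{not} an exact identity in a coarse median space: the ``absorption'' $\mu(a,\mu(a,b,y),y)=\mu(a,b,y)$ is a consequence of M3, which by Remark~\ref{median assup} only holds up to $\gamma$ (alternatively, up to $\lambda$ via $\mu(a,b,y)\in[a,y]_\lambda$). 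Either way you would need a second $\gamma$ or an extra $\lambda$, which overspends the budget $L_1(r)=(K+1)r+K\lambda+\gamma+2H(0)$.

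The fix is to reverse the order: push the hypothesis $y\in[a,x]_r$ \emph{inside} $\mu(a,b,\cdot)$ first, and then apply M3 to the resulting outer median $\mu(a,x,y)$ rather than to $\mu(a,b,x)$. Concretely, for $y\in[a,x]_r$:
\begin{align*}
\rho\bigl(\mu(a,b,y),\,\mu(a,b,\mu(a,x,y))\bigr) &\leqslant Kr+H(0) && \text{(C1, } \rho(y,\mu(a,x,y))\leqslant r\text{)}\\
\rho\bigl(\mu(\mu(a,x,y),a,b),\,\mu(a,\mu(a,b,x),y)\bigr) &\leqslant \gamma && \text{(coarse M3, M1, M2)}\\
\rho\bigl(\mu(a,\mu(a,b,x),y),\,\mu(a,x,y)\bigr) &\leqslant K\lambda+H(0) && \text{(C1, } \rho(\mu(a,b,x),x)\leqslant\lambda\text{)}\\
\rho\bigl(\mu(a,x,y),\,y\bigr) &\leqslant r. &&
\end{align*}
Summing and using M2 on the second line to identify $\mu(a,b,\mu(a,x,y))$ with $\mu(\mu(a,x,y),a,b)$ gives $\rho(\mu(a,b,y),y)\leqslant(K+1)r+K\lambda+\gamma+2H(0)=L_1(r)$, with nothing left over. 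The difference from your version is which compound median M3 is applied to — $\mu(a,x,y)$ rather than $\mu(a,b,x)$ — and that is precisely what makes the final collapse land on $y$ for free instead of on an uncontrolled median.
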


\begin{lem}[\cite{spakula2016coarse}]\label{lem2}
Let $X$ be a geodesic coarse median space of rank at most $\rank$. For every $\kappa >0$ and $t>0$, there exists $r_t>0$, such that for all $r\geqslant r_t$, $a,b\in X$, there exists $h\in [a,b]_{L_1(r)}$, such that
\begin{itemize}
  \item $\rho(a,h) \leqslant L_3(r,t)$, and
  \item $B(a,rt) \cap [a,b]_\kappa \subset [a,h]_{L_2(r,\kappa)}$.
\end{itemize}
\end{lem}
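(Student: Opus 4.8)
The plan is to argue by induction on the rank $\rank$, using the approximation axiom~(C2) to transport the problem into a finite median algebra --- equivalently, a finite--dimensional CAT(0) cube complex --- solving it there by the analysis of Section~\ref{sec:normal}, and then pushing the solution back to $X$ with all distortions under control. Morally, the point $h$ we produce is the image under the structure map $\lambda$ of the ``corner'' vertex $v$ of Proposition~\ref{def for v}, namely the vertex of the relevant truncated interval farthest from the image of $a$. The rank enters through Proposition~\ref{dec final}: the sphere of that truncated interval decomposes into at most $\rank$ sub--intervals of strictly smaller dimension, so the recursion terminates after at most $\rank$ nested approximations, and each nesting multiplies the radii by a bounded factor --- this is where the constant $3^\rank K^\rank$ in $L_3$ comes from.

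I would first dispose of the degenerate situation in which $\rho(a,b)$ is bounded in terms of $rt$ and the parameters: there one takes $h=b$ and the required containment is immediate from Lemma~\ref{lem1}. The base case $\rank\le 1$ is the hyperbolic--like case, in which coarse intervals are coarsely geodesic; one takes $h$ to be a point at controlled distance along a geodesic from $a$ to $b$, and the inclusion $B(a,rt)\cap[a,b]_\kappa\subset[a,h]_{L_2(r,\kappa)}$ follows from thin--triangle estimates packaged in the coarse median axioms. It is important that $h$ be produced canonically from $a$, $b$ and $rt$ (not by choosing finitely many points), so that a single $h$ serves every $z\in B(a,rt)\cap[a,b]_\kappa$ at once.

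For the inductive step, fix $a,b$ with $\rho(a,b)$ large, choose a geodesic $\gamma$ from $a$ to $b$, and apply~(C2) to a finite subset $A$ containing $a$, $b$ and a sufficiently dense finite sample of $B(a,rt)\cap[a,b]_\kappa$ and of $\gamma$; write $\varepsilon_A$ for the resulting approximation error. This yields a finite median algebra $(\Pi,\rho_\Pi,\mu_\Pi)$ of rank $\le\rank$, hence a finite CAT(0) cube complex of dimension $\le\rank$, with maps $\pi\colon A\to\Pi$ and $\lambda\colon\Pi\to X$. Put $a'=\pi a$, $b'=\pi b$; inside $\Pi$ let $v$ be the corner point of Proposition~\ref{def for v} for the appropriate normal ball about $a'$, and set $h:=\lambda(v)$. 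Three points must then be checked. (a)~$h\in[a,b]_{L_1(r)}$: since $v$ lies exactly on the interval $[a',b']$ in $\Pi$, transporting $\mu_\Pi(a',b',v)=v$ through $\lambda$ and invoking~(C1) shows $\mu(a,b,h)$ lies within a controlled distance of $h$, and Lemma~\ref{lem1} upgrades this to membership in $[a,b]_{L_1(r)}$. (b)~$\rho(a,h)\le L_3(r,t)$: in the cube complex $\rho_\Pi(a',v)$ is bounded by the dimension times the radius, i.e.\ by a bounded multiple of $\rank\, rt$; pushing through $\lambda$ costs a factor $K$ and an additive $\varepsilon_A$, while unwinding the nested approximations coming from the lower--rank sub--intervals of Proposition~\ref{dec final} multiplies the bound by at most $3K$ per level, yielding the claimed $3^\rank K^\rank rt+r$. (c)~the containment: for $z\in B(a,rt)\cap[a,b]_\kappa$ one has $\mu(a,b,z)\sim_\kappa z$; its image in $\Pi$ lies in the truncated interval, hence in $[a',v]$ by Proposition~\ref{def for v}; pulling this back with the coarse median identity M3 (valid up to the constant $\gamma$) together with the $\lambda$--distortion and~(C1) yields $\mu(a,h,z)\sim_{L_2(r,\kappa)}z$, that is $z\in[a,h]_{L_2(r,\kappa)}$.

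The crux --- and the step deserving the most care --- is~(c): one must propagate the statement ``$z$ lies in the truncated interval'' across the approximation $\lambda$ and back \emph{without} losing a multiplicative factor in $rt$. A careless estimate does lose such a factor, which is exactly why $h$ cannot be taken to be a point on $\gamma$ once $\rank\ge 2$, and this forces the ``corner'' choice together with the induction over the strictly lower--rank pieces of Proposition~\ref{dec final}. The remaining work is bookkeeping the additive errors --- $\gamma$, the Bowditch constant $\lambda$, and the approximation errors $\varepsilon_A$ --- accumulated over the at most $\rank$ levels of recursion, and then choosing $r_t$, depending on $\kappa$, $t$, $\rank$ and the parameters $K,H$, large enough that each such additive error is absorbed into the affine functions $L_1,L_2,L_3$; this completes the proof.
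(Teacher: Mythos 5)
This lemma is cited by the paper from \cite{spakula2016coarse} without an internal proof, so your argument can only be judged on its own terms. Your high-level picture --- induction on the rank $\rank$, a ``corner'' point playing the role of the $v$ of Proposition~\ref{def for v}, and a factor of roughly $3K$ accrued per level of recursion to account for the $3^\rank K^\rank$ in $L_3$ --- is the right intuition and does reflect how the argument is organised.

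The gap is in your use of axiom~(C2). You propose applying it to a finite set $A$ containing ``a sufficiently dense finite sample of $B(a,rt)\cap[a,b]_\kappa$''. The error bound in~(C2) is $H(|A|)$, which grows with $|A|$, and the lemma carries \emph{no local finiteness hypothesis}: the set $B(a,rt)\cap[a,b]_\kappa$ need not admit a finite net of cardinality bounded in terms of $t$, $\kappa$, $\rank$ and the parameters alone, and even under a uniform local finiteness assumption the cardinality of a net at the scale you need typically grows with $r$. Consequently the error $\varepsilon_A$ that you propose to ``absorb by choosing $r_t$ large'' is not a constant once $t$ is fixed, but itself increases with $r$, so the absorption is circular. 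This also surfaces concretely in your step~(c): you speak of ``the image in $\Pi$'' of an arbitrary $z\in B(a,rt)\cap[a,b]_\kappa$, but $\pi$ is defined only on $A$, so for general $z$ you must pass to a nearby sample point and transport via~(C1), and it is precisely here that the size of $A$ bites. Any correct proof must produce a \emph{single} $h$ without invoking~(C2) on a set whose cardinality is uncontrolled; in practice this is achieved by an iterated construction along a geodesic from $a$ towards $b$, advancing one step per level of rank and calling on~(C2) only for subsets of fixed small cardinality, so that the approximation errors remain absolute constants of the coarse median structure, independent of $r$ and of the local geometry of $X$. Your outline correctly locates the crux in step~(c) but does not resolve it, and the global ``approximate the whole truncated interval at once by a cube complex'' strategy is the wrong shape of argument for a statement with these quantifiers.
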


\begin{lem}[\cite{spakula2016coarse}]\label{lem3}
Let $X$ be a coarse median space. Fix $\kappa >0$. There exist constants $\alpha,\beta \geqslant 0$ depending only on the parameters of the coarse median structure and $\kappa$, such that the following holds: let $a,b,h,m \in X$ and $r\geqslant 0$ satisfy $m\in [a,h]_{L_2(r,\kappa)}$, $h\in [a,b]_{L_1(r)}$. Then $p=\mu(m,b,h)$ satisfies $\rho(h,p) \leqslant \alpha r+ \beta$.
\end{lem}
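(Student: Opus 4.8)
The plan is to reduce the whole statement to a single chain of three elementary moves: the coarse Lipschitz estimate C1, the approximate associativity M3 (valid up to $\gamma$), and the exact identities M1 and M2 which we are assuming hold for $\mu$ in this section. Starting from $p=\mu(m,b,h)$, the first move uses the hypothesis $m\in[a,h]_{L_2(r,\kappa)}$, that is $m\sim_{L_2(r,\kappa)}\mu(a,h,m)$, together with C1 applied in the first coordinate, to get
$$p=\mu(m,b,h)\sim_{KL_2(r,\kappa)+H(0)}\mu(\mu(a,h,m),b,h).$$

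The second move is the key algebraic manipulation. Apply the approximate M3 identity to $\mu(\mu(a,h,m),b,h)$ with the assignment $x=a$, $y=h$, $z=m$, $u=b$, $v=h$, which yields
$$\mu(\mu(a,h,m),b,h)\sim_\gamma\mu(\mu(a,b,h),\mu(h,b,h),m).$$
Now M2 followed by M1 collapses the spurious term: $\mu(h,b,h)=\mu(h,h,b)=h$, so the right-hand side is exactly $\mu(\mu(a,b,h),h,m)$. The third move invokes the second hypothesis $h\in[a,b]_{L_1(r)}$, i.e. $\mu(a,b,h)\sim_{L_1(r)}h$, and C1 once more to replace $\mu(a,b,h)$ by $h$, giving $\mu(\mu(a,b,h),h,m)\sim_{KL_1(r)+H(0)}\mu(h,h,m)$, and M1 finishes with $\mu(h,h,m)=h$. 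Chaining the estimates and using the triangle inequality for $\rho$ gives $\rho(h,p)\leqslant KL_1(r)+KL_2(r,\kappa)+\gamma+2H(0)$.

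Finally, substituting the explicit formulas $L_1(r)=(K+1)r+K\lambda+\gamma+2H(0)$ and $L_2(r,\kappa)=(K+1)r+\kappa+H(0)$ into this bound produces an estimate of the form $\alpha r+\beta$ with $\alpha=2K(K+1)$ and $\beta$ assembled from $K$, $H(0)$, $\lambda$, $\gamma$, $\kappa$; since $\lambda$ and $\gamma$ depend only on the parameters $K,H$ of the coarse median structure, $\alpha$ and $\beta$ depend only on those parameters and on $\kappa$, as required. I do not expect any genuine obstacle here: the argument is mechanical once the moves are lined up. The one place deserving care is the variable assignment in the M3 pattern together with the permutation conventions of M2, so that the term $\mu(h,b,h)$ really does simplify to $h$ and the error stays of the advertised form; everything else is routine bookkeeping of constants.
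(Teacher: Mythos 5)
Your proof is correct: the chain of three moves (C1 on the first coordinate using $m\in[a,h]_{L_2(r,\kappa)}$, the approximate M3 with $x=a,y=h,z=m,u=b,v=h$ followed by the exact collapse $\mu(h,b,h)=h$ via M2 and M1, then C1 again using $h\in[a,b]_{L_1(r)}$ and $\mu(h,h,m)=h$) is precisely the coarsification of the exact-median computation showing $h\in[m,b]$, and the bookkeeping yields $\alpha=2K(K+1)$ with $\beta$ depending only on $K,H(0),\lambda,\gamma,\kappa$. This is the same approach as the cited source.
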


\begin{proof}[Proof of Theorem \ref{thm b}]
The proof is based on the construction used in~\cite{spakula2016coarse} to prove property A, and for the readers' convenience, we give a sketch of their proof. In fact we will verify the stronger conditions on the $S$ sets required to apply Theorem~\ref{main prop}. Fix a base point $x_0 \in X$, and let $\alpha,\beta$ be the constants from Lemma \ref{lem3}. First apply Lemma \ref{lem2} for $\kappa = \lambda$ and all $t\in \mathbb{N}$ to obtain a sequence $r_t \in \mathbb{N}$, such that the conclusion of the lemma holds. Furthermore, we can choose the $r_t$ inductively to arrange the sequence $\mathbb{N} \ni t \mapsto l_t=\frac{tr_t-H(0)}{3K}$ is increasing.

Now fix $x\in X$, $t\in \mathbb{N}$, and $k\in \{1,2,\ldots, 3l_t\}$. For any $y\in B(x,k)$, Lemma \ref{lem2} applied for $a=y$, $b=x_0$ and $r=r_t$, produces a point $h_y\in [y,x_0]_{L_1(r_t)}$. We define
$$S(x,k,l_t)=\{h_y\in X: y\in B(x,k)\}.$$
We need to verify these sets satisfy Condition 2 in the statement of Proposition \ref{main prop}, i.e. we need to show there exists a subexponential function $f\colon \mathbb{R} \rightarrow \mathbb{R}$, satisfying:
\begin{enumerate}[i)]
  \item $\forall t \in \mathbb{N}$, $\exists S_t >0$, such that $S(x,k,l_t) \subseteq B(x,S_t)$ for all $k=1,\ldots, 3l_t$;
  \item $\forall t \in \mathbb{N}$, $\forall k,k'$ with $1 \leqslant k \leqslant k' \leqslant 3l_t$, $\forall x \in X$,
                we have $S(x,k,l_t) \subseteq S(x,k',l_t)$;
  \item $\forall x,y \in X$ with $\rho(x,y) \leqslant l_t$, we have:
    \begin{itemize}
      \item $S(x,k-\rho(x,y),l_t) \subset S(x,k,l_t) \cap S(y,k,l_t)$, for $k=\rho(x,y)+1,\ldots,3l_t$;
      \item $S(x,k+\rho(x,y),l_t) \supset S(x,k,l_t) \cap S(y,k,l_t)$, for $k=1,\ldots,3l_t-\rho(x,y)$;
    \end{itemize}
  \item $\forall t \in \mathbb{N}$, $\forall k=1,2,\ldots,3l_t$, $\forall x \in X$, we have $\sharp S(x,k,l_t) \leqslant f(l_t)$.
\end{enumerate}

By the construction, ii) and iii) hold naturally. For i), by Lemma \ref{lem2}, we know $S(x,k,l_t) \subseteq B(x,l_t+L_3(r_t,t))$. The only thing left is to find a subexponential function $f$ such that condition iv) holds. The following argument follows totally from the proof in~\cite{spakula2016coarse}, and we omit some calculation. The readers can turn to their original paper for more details.

Take $y\in B(x,k)$, with the notation as above. Denote $m_y=\mu(x,y,x_0)$. Then by Lemma \ref{lem2}, one can deduce that $m_y\in [y,h_y]_{L_2(r_t,\lambda)}$. Now since $h_y\in [y,x_0]_{L_1(r_t)}$, Lemma~\ref{lem3} implies the point $p_y=\mu(m_y,x_0,h_y)\in [m_y,x_0]_\lambda$ satisfies $\rho(h_y,p_y) \leqslant \alpha r_t + \beta$. As $m_y=\mu(x,y,x_0)\in[x,x_0]_\lambda$, Lemma \ref{lem1} now implies $p_y\in [x,x_0]_{L_1(\lambda)}$. Consequently, we have $\rho(x,p_y) \leqslant 3l_t+3^\rank K^\rank tr_t +r_t +\alpha r_t + \beta$, which depends linearly on $l_t$. Now by Proposition 9.8 in~\cite{bowditch2014embedding}, the number of possible points $p_y$ is bounded by $P(l_t)$ for some polynomial $P$ depending only on $H,K,\rank$ and uniform local finiteness of $X$. Since $X$ has at most exponential growth, it follows that $\sharp S(x,k,l_t)$ is at most $P(l_t)c'c^{r_t}$ for some constants $c,c'\geqslant 1$. Take $f(l_t)=P(l_t)c'c^{r_t}$ and recall that in the limit $r_t/l_t\rightarrow 0$.  We extend $f$ to a function on $\mathbb R^+$ by setting $f(r):=f(l_t)$ for $r\in (l_{t-1}, l_t]$. This completes the proof.

\end{proof}


\bibliographystyle{plain}
\bibliography{bibfileASDIM1}

\end{document}